\title[Variational scheme for drift diffusion]{Convergence of a variational Lagrangian scheme \\ for a nonlinear drift diffusion equation}
\author{Daniel Matthes}
\address{Daniel Matthes \\ Zentrum Mathematik \\ TU M\"unchen \\ Boltzmannstr. 3 \\ D-85748 Garching \\ Germany}
\email{matthes@ma.tum.de}
\author{Horst Osberger}
\address{Horst Osberger \\ Zentrum Mathematik \\ TU M\"unchen \\ Boltzmannstr. 3 \\ D-85748 Garching \\ Germany}
\email{osberger@ma.tum.de}
\date{\today}
\begin{document}

\newcommand{\setR}{\mathbb{R}}
\newcommand{\setN}{\mathbb{N}}
\newcommand{\dd}{\,\mathrm{d}}
\newcommand{\indy}{\mathbf{1}}

\newcommand{\theX}{\mathrm{X}}
\newcommand{\xvec}{\vec{\mathrm x}}
\newcommand{\yvec}{\vec{\mathrm y}}
\newcommand{\gvec}{\vec{\mathrm z}}

\newcommand{\convf}{\mathbf{u}}
\newcommand{\convg}{\mathbf{z}}
\newcommand{\convX}{\mathbf{X}}

\newcommand{\theh}{{\boldsymbol \xi}}
\newcommand{\xspc}{\mathfrak{X}}
\newcommand{\xspcN}{\mathfrak{X}_\theh}
\newcommand{\xseq}{\mathfrak{x}}
\newcommand{\dens}{\mathrm{D}^M(I)}
\newcommand{\densN}{\mathrm{D}^M_\theh(I)}
\newcommand{\densNj}{\mathrm{D}^M_{\theh_j}(I)}
\newcommand{\densNintro}{\mathrm{D}^M_{\theh^{(j)}}(I)}
\newcommand{\prb}{\mathcal{P}(I)}
\newcommand{\tst}{\mathcal{D}}

\newcommand{\tv}[1]{\{#1\}_{\mathrm{TV}}}
\newcommand{\tvn}[1]{\|#1\|_{\mathrm{TV}}}

\newcommand{\Wmat}{\mathrm{W}}
\newcommand{\Wnew}{\widetilde{\mathrm W}}
\newcommand{\Imat}{\mathrm{I}}
\newcommand{\Dmat}{\mathrm{D}}
\newcommand{\Qmat}{\mathrm{Q}}

\newcommand{\anrj}{\mathbf{E}}
\newcommand{\nrj}{\mathbb{E}}
\newcommand{\auxil}{\mathbf{A}}

\newcommand{\prss}{\mathrm{P}}
\newcommand{\nrjbelow}{\underline{\mathbf{E}}}
\newcommand{\aF}{\widetilde{\mathbf{F}}}
\newcommand{\wass}{\mathbf{W}_2}
\newcommand{\grd}{\partial_{\xvec}}
\newcommand{\hess}{\partial^2_{\xvec}}
\newcommand{\nci}{\mathfrak{F}}
\newcommand{\delmin}{{\underline\delta}}
\newcommand{\delmax}{{\overline\delta}}
\newcommand{\supp}{\operatorname{supp}}
\newcommand{\velo}{\vec{v}}
\newcommand{\flow}{\mathfrak{S}}
\newcommand{\testf}{\varphi}
\newcommand{\err}{\mathsf{err}}
\newcommand{\bigo}{\mathcal{O}}
\newcommand{\var}[1]{\operatorname{Var}_a^b\big(#1\big)}
\newcommand{\thec}{\mathcal{C}}
\newcommand{\apt}{p}
\newcommand{\eps}{\varepsilon}

\newcommand{\hatf}{\theta}
\newcommand{\hatF}{\Theta}

\newcommand{\essinf}{\operatorname*{ess\,inf}}

\newtheorem{thm}{Theorem}
\newtheorem{prp}[thm]{Proposition}
\newtheorem{lem}[thm]{Lemma}
\newtheorem{cor}[thm]{Corollary}
\newtheorem{rmk}[thm]{Remark}
\newtheorem{dfn}[thm]{Definition}

\begin{abstract}
 We study a Lagrangian numerical scheme for solution of a nonlinear drift diffusion equation on an interval.
 The discretization is based on the equation's gradient flow structure with respect to the Wasserstein distance.
 The scheme inherits various properties from the continuous flow, 
 like entropy monotonicity, mass preservation, metric contraction and minimum/ maximum principles. 
 As the main result, we give a proof of convergence in the limit of vanishing mesh size under a CFL-type condition.
 We also present results from numerical experiments.
\end{abstract}

\maketitle

\section{Introduction}
In this paper, we propose and analyze a very particular spatio-temporal discretization of the following 
nonlinear initial-boundary value problem on an interval $I=[a,b]$:
\begin{align}
 \label{eq:adde}
 \partial_t u = \prss(u)_{xx} + (V_x(x)u)_x,\quad u_x(t;a)=u_x(t;b)=0, \quad u(0;x)=u^0(x)\ge0.
\end{align}
Specifically, we are interested in approximating non-negative weak solutions $u:[0,T]\times I\to\setR_{\ge0}$ to \eqref{eq:adde}
on arbitrary time horizonts $T>0$.
Our assumptions are that 
\begin{itemize}
\item the nonlinearity $\prss:\setR_{\ge0}\to\setR$ is continuous, is $C^2$-smooth on $\setR_{>0}$, 
 and satisfies
 \begin{align}
   \label{eq:pressume}
   \prss(0)=0, \quad
   \prss'(r)>0, 
   \quad \lim_{r\downarrow0}\prss'(r) < \infty,
   \quad \lim_{r\to\infty}\prss'(r)=+\infty,
   \quad s\mapsto\prss(1/s)\quad\text{is concave},
 \end{align}
 the prototypical example being the porous medium term $\prss(r)=r^m$ with some $m>1$;
\item the potential $V:I\to\setR$ is $C^2$-smooth with
 \begin{align}
   \label{eq:Vassume}
   V_x(a)=V_x(b)=0. 
 \end{align}
\end{itemize}
Under the given regularity assumptions, there are numerous possibilities to design efficient numerical schemes for solution of \eqref{eq:adde},
e.g., using finite differences.
The particular discretization under consideration here is special insofar
as it is based on the representation of \eqref{eq:adde} as a gradient flow,
and it inherits certain qualitative features of that variational structure; 
see below.

\subsection{Gradient flow structure}
We summarize some basic facts about the variational formulation of \eqref{eq:adde}.
The divergence form in combination with the no-flux boundary conditions and \eqref{eq:Vassume} implies the conservation of mass
\begin{align}
 \label{eq:mass}
 \int_I u(t;x)\dd x = M:= \int_I u^0(x)\dd x > 0  \quad \text{for all $t>0$},
\end{align}
and we shall consider $M$ as some fixed quantity from now on.
The space $\dens\subset L^1(I)$ of bounded and strictly positive densities $u$ with total mass $M$ 
can be endowed with the $L^2$-Wasserstein metric $\wass$;
the definition and elementary properties of this metric are reviewed in Section \ref{sct:prelims} below.
Next, introduce the \emph{energy functional} $\anrj$ for $u\in\dens$ by
\begin{align}
 \label{eq:anrj}
 \anrj(u) = \int_I \phi(u(x))\dd x + \int_I u(x)V(x)\dd x,
\end{align}
where the internal energy potential $\phi:\setR_{\ge0}\to\setR$ is an arbitrary second anti-derivative of $r\mapsto P'(r)/r$;
see \eqref{eq:prsstophi}.

The link between the energy $\anrj$ and equation \eqref{eq:adde} --- which has been rigorously established by Otto \cite{OttPME} --- 
is that solutions to \eqref{eq:adde} form a gradient flow in the energy landscape of $\anrj$ with respect to the metric $\wass$. 
Further, it has been observed by McCann \cite{McCann}  that the functional $\anrj$ is $(-\Lambda)$-convex along geodesics in $\wass$,
with 
\begin{align}
 \label{eq:lambda}
 \Lambda = \max_{x\in I}\big(-V_{xx}(x)\big) \ge 0.
\end{align}
Consequently, the gradient flow is $(-\Lambda)$-contractive.
Some implications are:
\begin{enumerate}
\item The energy $\anrj(u(t))$ is monotonically decreasing in $t$.
\item Two solutions $u$, $v$ diverge at most at an exponential rate of $\Lambda$ in the Wasserstein distance,
 i.e.,
 \begin{align}
   \label{eq:contract}
   \wass(u(t),v(t)) \le \wass(u^0,v^0)e^{\Lambda t}\quad \text{for all $t>0$}.
 \end{align}
\item There is a unique non-negative and mass preserving global solution for measure valued initial conditions,
 i.e., the density $u^0$ in \eqref{eq:adde} can be replaced by an arbitrary non-negative measure on $I$ with mass $M$.
\end{enumerate}
Below, we discuss in which sense these properties are inherited by our discretization.

\subsection{Discretization}
Semi-discretization in time of gradient flow equations like \eqref{eq:adde} 
has become a key tool in existence proofs and for the rigorous derivation of a priori estimates.
The celebrated \emph{minimizing movement} scheme \cite{AGS}
(also referred to as \emph{JKO} \cite{JKO} or simply \emph{implicit Euler} scheme)
works in the situation at hand as follows:
given a time step $\tau>0$, one defines inductively --- starting from $u_\tau^0=u^0$ --- approximations $u_\tau^n$ of $u(n\tau)$
as minimizers in $\dens$ of ``penalized energy functionals'' $\anrj_\tau(\cdot,u_\tau^{n-1})$, 
given by
\begin{align}
 \label{eq:mm}
 \anrj_\tau(u,u_\tau^{n-1}) = \frac1{2\tau}\wass(u,u_\tau^{n-1})^2 + \anrj(u).
\end{align}
Thanks to the $(-\Lambda)$-convexity of $\anrj$, it follows from the theory developed in \cite{AGS}
that the functions $\bar u_\tau:[0,\infty)\to\dens$ obtained by piecewise constant interpolation in time
converge for $\tau\downarrow0$ to the unique weak solution $u:[0,\infty)\to\dens$ of \eqref{eq:adde}.

To obtain a \emph{full} (spatio-temporal) discretization, we perform the minimization of $\anrj_\tau$ not over the entire set $\dens$,
but over a submanifold $\densN$ of finite dimension $(K-1)\in\setN$.
The discretization parameter $\theh=(\xi_0,\xi_1,\ldots,\xi_K)$ is an increasing sequence of numbers $\xi_k\in[0,M]$, 
with $0=\xi_0<\xi_1<\xi_2<\cdots<\xi_K=M$.
The corresponding submanifold $\densN$ consists of piecewise constant density functions $u\in\dens$ of the form
\begin{align*}
 u = \sum_{k=1}^K u_k\indy_{(x_{k-1},x_k]},
\end{align*}
where the end points $x_k$ of the intervals are variable subject to the constraint
\begin{align*}
 a=x_0 < x_1 < \cdots < x_K=b,
\end{align*}
and the positive weights $u_k$ are given in terms of the $x_k$ by
\begin{align*}
 (x_k-x_{k-1})u_k = \delta_k : = \xi_k-\xi_{k-1} > 0.
\end{align*}
One may think of $\delta_1$ to $\delta_K$ as lumps of mass, 
each of which is uniformly distributed on its respective interval $(x_{k-1},x_k]$, 
see Figure \ref{fig:blocks}.
We refer to this discretization as \emph{Lagrangian} scheme.
\medskip

\begin{center}
 \fbox{
   \begin{minipage}{0.95\textwidth}
     Given a discretization $\Delta=(\tau;\theh)$ consisting of a time step $\tau>0$ and a spatial mesh $\theh$,
     and an initial condition $u_\Delta^0\in\densN$,
     define a \emph{discrete solution} $u_\Delta=(u_\Delta^0,u_\Delta^1,\ldots)$ inductively by
     \begin{align}
       \label{eq:dmm}
       u_\Delta^n = \operatorname*{argmin}_{u\in\densN} \anrj_\tau(u,u_\Delta^{n-1}) \quad \text{for $n=1,2,\ldots$}
     \end{align}
   \end{minipage}
 }
\end{center}
\medskip

The seemingly involved definition of the recursion \eqref{eq:dmm}  leads to a simple and practical numerical scheme, 
whose complexity is comparable to that of a standard discretization of \eqref{eq:adde} by finite differences. 
In addition, the Lagrangian nature of the scheme admits a geometric interpretation of the solution
in terms of transportation of mass elements on $I$ along the characteristics $x_k(t)$.
Concerning structure preservation, we summarize some noteworthy features of this approach:
\begin{itemize}
\item The energy $\anrj(u_\Delta^n)$ is monotone in $n$, and all $u_\Delta^n$ are positive and have the same mass $M$.
\item Any two discrete solutions $u_\Delta$ and $v_\Delta$ on the same grid $\Delta$ satisfy the contraction estimate
 \begin{align*}
   \wass\big(u_\Delta^n,v_\Delta^n)
   \le (1-2\Lambda\tau)^{-n/2}\wass\big(u_\Delta^0,v_\Delta^0\big),
 \end{align*}
 which turns into \eqref{eq:contract} in the limit $\tau\downarrow0$. 
 See Section \ref{sct:contract}.
\item The scheme is applicable to arbitrary initial data $u^0\in L^1(I)$ with finite energy.
 In fact, weak convergence of $u_\Delta^0$ to $u^0$ suffices to conclude
 strong convergence of the discrete solution $u_\Delta$ in $L^1([0,T]\times I)$ to the correct weak solution $u$ of \eqref{eq:adde}.
 See Theorem \ref{thm:main} below.
\item Discrete solutions obey a minimum/maximum principle. 
 See Section \ref{sct:minmax}.
\end{itemize}
The choice of $\densN$ originates from an alternative formulation of equation \eqref{eq:adde}.
Namely, $u$ is a (positive and classical) solution to \eqref{eq:adde} iff 
its inverse distribution function $\theX$ --- see Section \ref{sct:prelims} for its definition ---
satisfies the initial-boundary value problem
\begin{align}
 \label{eq:dde}
 \partial_t\theX =  \psi'(\theX_\xi)_\xi - V_x\circ\theX, \quad \theX(t;0)=0,\,\theX(t;M)=1, \quad \theX(0;\xi) = \theX^0(\xi),
\end{align}
where $\psi:\setR_+\to\setR$ is defined by
\begin{align}
 \label{eq:psi}
 \psi(s) = s\phi(s^{-1}) \quad \text{for all $s>0$}.
\end{align}
The variational structure of \eqref{eq:dde} is quite apparent:
solutions $\theX$ to \eqref{eq:dde} are gradient flows of the functional
\begin{align}
 \label{eq:nrj}
 \nrj(\theX) = \int_0^M\psi\big(\theX_\xi(\xi)\big)\dd\xi + \int_0^M V\circ\theX(\xi)\dd\xi
\end{align}
with respect to the usual scalar product on $L^2([0,M])$.
In effect, we discretize the $L^2$-gradient flow \eqref{eq:dde} rather than the $\wass$-gradient flow \eqref{eq:adde},
representing $\theX$ as a linear combination of piecewise linear ansatz functions with respect to the (time-independent) mesh $\theh$.

\subsection{Convergence result}
Our main result is the following.
\begin{thm}
 \label{thm:main}
 Let a non-negative initial condition $u^0\in L^1(I)$ of mass $M$ with $\anrj(u^0)<\infty$ be given,
 and fix a time horizont $T>0$.

 Consider a sequence of discretizations $\Delta^{(j)}=(\tau^{(j)};\theh^{(j)})$,
 consisting of time steps $\tau^{(j)}\downarrow0$ and spatial meshes $\theh^{(j)}$ with $\max_k(\delta^{(j)}_k)\downarrow0$,
 and an associated sequence of initial conditions $u_{\Delta^{(j)}}^0\in\densNintro$.
 Assume that $u_{\Delta^{(j)}}^0\to u^0$ weakly in $L^1(I)$, that $\anrj(u_{\Delta^{(j)}}^0)\le\overline\anrj$,
 that $\max_k\delta^{(j)}_k/\min_\ell\delta^{(j)}_\ell\le\bar\alpha$,
 and that the following \emph{inverse CFL condition} holds:
 \begin{align}
   \label{eq:lfc}
   \big(\max_k\delta^{(j)}_k\big)^2 < 6\psi''\bigg(\frac{6\bar\alpha e^{2\Lambda T}}{\min_x u_{\Delta^{(j)}}^0}\bigg)\tau^{(j)},
 \end{align}
 with $\psi$ defined in \eqref{eq:psi}, and with $\Lambda\ge0$ from \eqref{eq:lambda}.

 The scheme \eqref{eq:dmm} produces a sequence of discrete solutions $u_{\Delta^{(j)}}$.
 Denote by $\bar u_{\Delta^{(j)}}:[0,\infty)\to\densNj$ the respective interpolants that are piecewise constant in time, see \eqref{eq:tinterpolate}.
 Then $\bar u_{\Delta^{(j)}}$ converges strongly in $L^1([0,T]\times I)$ to the unique weak solution $u$ of \eqref{eq:adde}.
\end{thm}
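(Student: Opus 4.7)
The plan is to work in Lagrangian coordinates, via the inverse distribution functions $\theX_\Delta^n$ of the discrete densities $u_\Delta^n$. On the mesh $\theh$ these are piecewise affine maps $[0,M]\to I$ with breakpoints at $\xi_0,\ldots,\xi_K$, so the scheme \eqref{eq:dmm} is equivalent to an $L^2$ minimizing-movement step for the functional $\nrj$ from \eqref{eq:nrj}, restricted to this finite-dimensional affine space. Taking variations in the interior nodes produces a coupled system of $(K-1)$ implicit nonlinear equations --- a finite-difference analogue of \eqref{eq:dde} --- which will serve as the ``discrete PDE'' for the subsequent limit passage.

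The easy a priori estimates come for free from the variational formulation: monotonicity $\nrj(\theX_\Delta^n)\le\nrj(\theX_\Delta^0)$ with $\nrj(\theX_\Delta^0)$ uniformly bounded by $\overline\anrj$; the classical telescoping bound $\sum_n\|\theX_\Delta^n-\theX_\Delta^{n-1}\|_{L^2(0,M)}^2\le 2\tau\,\nrj(\theX_\Delta^0)+C$; the inter-orbit contraction estimate already announced in the introduction; and the minimum/maximum principles of Section \ref{sct:minmax}. Since $\psi''(s)=P'(1/s)/s^2>0$, the functional $\nrj$ is convex in $\theX_\xi$, so the energy bound together with the mesh-ratio hypothesis $\max_k\delta_k/\min_\ell\delta_\ell\le\bar\alpha$ yields uniform control of $\psi(\theX_{\Delta,\xi}^n)$ in $L^\infty_t L^1_\xi$, and hence one-sided control of the slopes.

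The crucial use of the inverse CFL condition \eqref{eq:lfc} comes next, and this is the step I expect to be the main obstacle. I would show that \eqref{eq:lfc} propagates, from step $n-1$ to step $n$, a pointwise bound of the form $\theX_{\Delta,\xi}^n\le 6\bar\alpha e^{2\Lambda n\tau}/\min u_\Delta^0$ --- equivalently, a uniform \emph{lower} bound on the discrete density on $[0,T]$. The precise shape of \eqref{eq:lfc}, with $\psi''$ evaluated at exactly this target bound, is dictated by solvability of the coupled discrete Euler--Lagrange system by a fixed-point/contraction argument inside this density range; if \eqref{eq:lfc} fails, neighbouring nodes could in principle collapse within a single step and the lower density bound would be lost. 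Once the density is uniformly bounded both above (via the classical maximum principle) and below, $\psi'$ and $V_x$ are uniformly Lipschitz on the relevant range, and all nonlinear coefficients of the scheme are controlled.

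The rest of the argument follows a familiar pattern. The $L^2$ time-difference estimate, combined with the spatial regularity inherited from $\nrj$ and the uniform density bounds, yields strong compactness of $\bar\theX_\Delta$ via an Aubin--Lions-type theorem, which translates into strong $L^1([0,T]\times I)$ convergence of $\bar u_\Delta$ (along a subsequence). Passing to the limit in the discrete Euler--Lagrange equations tested against smooth functions identifies the limit as a weak solution of \eqref{eq:dde}, and hence of \eqref{eq:adde}; uniqueness, available through \eqref{eq:contract}, forces the whole sequence to converge.
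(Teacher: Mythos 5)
Your overall architecture --- Lagrangian coordinates, minimizing-movement interpretation, energy and time-difference estimates, minimum principle triggered by the CFL condition, Aubin--Lions compactness, and passage to the limit in the discrete Euler--Lagrange system --- is the same as the paper's. However, there are two substantive problems that would need repair before this sketch yields the theorem.

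First, your explanation of the role of the inverse CFL condition is incorrect. The discrete scheme is \emph{always} uniquely solvable for any $\tau\Lambda<1$ (Proposition \ref{prp:existence}); no fixed-point or contraction argument depending on \eqref{eq:lfc} is needed, and nodes never collapse within a step because $\psi(s)\to\infty$ as $s\downarrow0$ forces minimizers to stay in the interior of $\xseq$. The CFL condition is used elsewhere: it is exactly what lets one propagate the discrete minimum principle \eqref{eq:maxprinc} from step $n-1$ to step $n$, and the actual payoff of that propagated lower density bound is the uniform shrinkage of the \emph{physical} mesh, $\max_{n\le N_j}\delmax(\xvec_{\Delta_j}^n)\to0$ \eqref{eq:xuniform}. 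That shrinkage is what makes the consistency errors in the discrete weak formulation (the discrepancies between $\rho_{xx}(\hat x)$ and $\rho_{xx}(x)$, etc.) go to zero. You also invoke an upper density bound ``via the classical maximum principle,'' but that principle requires an additional hypothesis not guaranteed by the theorem, and the convergence proof never uses it.

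Second --- and more seriously --- the compactness argument has a gap. You claim that the energy bound plus the mesh-ratio hypothesis gives control of $\psi(\theX_{\Delta,\xi}^n)$ in $L^\infty_t L^1_\xi$ and that this, ``combined with the spatial regularity inherited from $\nrj$ and the uniform density bounds,'' yields strong $L^1$ compactness. But boundedness of the energy is a zeroth-order bound, and pointwise density bounds give only $L^\infty$ control; neither supplies the equicontinuity in space that Aubin--Lions requires. The required quantitative spatial regularity comes instead from the \emph{dissipation} side of the variational inequality: the slope estimate \eqref{eq:ee} bounds $\tau\sum_n [\grd\nrj_\theh(\xvec_\Delta^n)]^T\Wmat^{-1}[\grd\nrj_\theh(\xvec_\Delta^n)]$, which after some manipulation (using the mesh-ratio bound) controls $\tau\sum_n\sum_k(\psi'(z^n_{k+1})-\psi'(z^n_k))^2/(\delta_k+\delta_{k+1})$ \eqref{eq:ei}, and this is then converted into a bound on $\tau\sum_n\var{\Phi(u_\Delta^n)^2}$ with $\Phi(r)=\int_0^r\sqrt{\rho}\,\phi''(\rho)\dd\rho$ \eqref{eq:regularity2}. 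It is this total-variation bound, together with the superlinear growth of $\Phi^2$ relative to $\prss$, that makes the coercive integrand $\nci(u)=\tv{\Phi(u)^2}$ have compact sublevels and hence provides $L^1$ compactness (and also $L^1$ convergence of $\prss(\bar u_\Delta)$, needed in the weak formulation). Your sketch never names this quantity nor derives a genuine spatial regularity estimate from dissipation, so as written the compactness step does not go through.
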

A comment is due on condition \eqref{eq:lfc}.
Since $\psi''(s)\to0$ for $s\to\infty$, this condition implies that the non-negative initial datum $u^0$
needs to be approximated by strictly positive data $u_{\Delta^{(j)}}^0$,
and the smaller one whishes to choose the minimal value of $u_{\Delta^{(j)}}^0$, 
the finer one needs to make the grid $\theh^{(j)}$.
Condition \eqref{eq:lfc} thus quantifies the intuitive requirement that not only the mesh of $\xi_k$'s in $[0,M]$,
but also the induced mesh of $x_k$'s in $I$ should become arbitrarily fine in the limit, uniformly for all times $t\in[0,T]$.
Consequently, our scheme does not allow to track propagating fronts --- like spreading Barenblatt profiles --- directly on the discrete level as in \cite{budd,OttScheme},
but it is able to approximate these fronts arbitrarily well with strictly positive solutions if the mesh is sufficiently fine.

\subsection{Related results from the literature}
Studies on Lagrangian schemes for \eqref{eq:adde} are widely scattered in the literature.
Already MacCamy and Sokolovsky \cite{MacCamy} present a discretization that is almost identical to ours, 
for \eqref{eq:adde} with $\prss(u)=u^2$ and $V\equiv0$.
Another pioneering work in this direction is the paper by Russo \cite{Russo}, 
who compares several (semi-)Lagrangian discretizations in the linear case $\prss(u)=u$;
extensions to two spatial dimensions are also discussed.
Later, Budd et al \cite{budd} used a moving mesh to capture self-similar solutions of the porous medium equation on the whole line.
The general theme was picked up recently by Carrillo and Moll \cite{CarM}, 
who define a Lagrangian discretization of aggregation equations in two space dimensions,
based on the reformulation in terms of evolving diffeomorphisms \cite{Evans}.

The connection between Lagrangian schemes and the gradient flow structure of equation \eqref{eq:adde} was investigated
by Kinderlehrer and Walkington \cite{Kinderlehrer} and in a series of unpublished theses \cite{OttScheme,Leven}. 
In a recent paper by Westdickenberg and Wilkening \cite{WW}, 
a similar scheme for \eqref{eq:adde} is obtained as a by-product in the process of designing 
a structure preserving discretization for the Euler equations.
Burger et al \cite{BCW} devise a numerical scheme for \eqref{eq:adde} in dimension two on basis of the gradient flow structure, 
using the hydrodynamical formulation of the Wasserstein distance \cite{Brenier} instead of the Lagrangian approach.
The Lagrangian approach was adapted to fourth order equations, 
namely by Cavalli and Naldi \cite{Naldi} for the Hele-Shaw flow,
and by D\"uring et al \cite{DMM} for the DLSS equation.

In the aforementioned works, numerical schemes are defined and used in experiments;
qualitative properties and convergence are not studied analytically.
Some analytical investigations have been carried out by Gosse and Toscani \cite{GosT}: 
for a Lagrangian scheme with explicit time discretization, 
they prove comparison principles, and they rigorously discuss stability and consistency.
Also, a full discretization of the Keller-Segel model has been analyzed by Blanchet et al \cite{BCC} in view of convergence to equilibrium.
However, to the best of our knowledge, a proof for convergence of discrete to continuous (weak) solutions is not available in the literature.

Finally, a remark is due on an alternative way of proving convergence of the scheme.
By use of stability results for gradient flows \cite{AGS,ALS} and the machinery of $\Gamma$-convergence,
it seems likely that Theorem \ref{thm:main} can be obtained by exploiting the variational structure more deeply than we do here.
In particular, the theory on perturbed $\lambda$-contractive gradient flows developed by Serfaty \cite{Serfaty} 
indicates an alternative route towards the same goal.
We followed the elementary approach based on a priori estimates here, 
partly in order to avoid heavy machinery,
but mainly with the aim to develop a ``stable'' concept of proof that generalizes more directly 
to gradient flows without convexity properties (like fourth order equations).

\subsection{Outline of the paper}
Section \ref{sct:prelims} below summarizes some basic results on inverse distribution functions and convexity in the Wasserstein metric.
In Section \ref{sct:discretex}, we describe in detail the spatial discretization and study the restrictions of the Wasserstein metric and energy to the $\densN$.
The discrete scheme \eqref{eq:dmm} is studied in Section \ref{sct:discretet}, and we derive the Euler-Lagrange equations.
Section \ref{sct:quality} provides a summary of some qualitative properties of the discretization, 
like metric contraction and the minimum/maximum principle.
The proof of Theorem \ref{thm:main} is given in Section \ref{sct:convergence}.
The paper concludes with the results of various numerical experiments in Section \ref{sct:numerics},
and with a calculation of the consistency order.

\section{Preliminaries and notations}
\label{sct:prelims}
For an introduction to the theory of optimal transportation, we refer to \cite{VilBook}.
A comprehensive theory of gradient flows in the Wasserstein metric can be found in \cite{AGS}.

\subsection{Inverse distribution functions}
Throughout the paper, we shall denote by
\begin{align*}
 \dens := \Big\{ u\in L^1(I)\cap L^\infty(I)\, \Big|\, \essinf_{x\in I}u(x)>0,\ \int_I u(x)\dd x = M \Big\}
\end{align*}
the space of positive density functions of total mass $M$.
For $u\in\dens$, define its distribution function $U:I\to[0,M]$ by
\begin{align*}
 U(t;x) = \int_a^x u(t;y)\dd y,
\end{align*}
and introduce its inverse function $\theX=U^{-1}:[0,M]\to I$.
By our choice of $\dens$, the latter is well-defined and belongs to
\begin{align*}
 \xspc := \big\{ \theX\in C^{0,1}([0,M];I)\,\big|\, \theX(0)=a,\,\theX(M)=b,\,\text{$\theX$ strictly increasing} \big\}.
\end{align*}
Thanks to the Lipschitz continuity of $U$ and $\theX$, we can differentiate the identity $U\circ\theX(\xi)=\xi$ at almost every $\xi\in[0,M]$
and obtain the relation
\begin{align}
 \label{eq:density}
 u(\theX(\xi))\theX_\xi(\xi)=1 \qquad \text{for a.e. $\xi\in[0,M]$}.
\end{align}
The inverse distribution function allows for an explicit representation of the Wasserstein distance in one spatial dimension.
\begin{lem}[see e.g. \cite{VilBook}]
 \label{lem:miracle}
 Let $u_0,u_1\in\dens$ have inverse distribution functions $\theX_0,\theX_1\in\xspc$.
 Then their Wasserstein distance amounts to
 \begin{align}
   \label{eq:miracle}
   \wass(u_0,u_1) = \bigg( \int_0^M [X_1(\xi)-X_0(\xi)]^2\dd\xi \bigg)^{1/2},
 \end{align}
 and a minimal geodesic $(u_s)_{0\le s\le 1}$ connecting $u_0$ to $u_1$ in $\wass$ is given by $X_s=sX_1+(1-s)X_0$.
\end{lem}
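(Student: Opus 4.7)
The plan is to follow the classical route via monotone rearrangement, which is standard in one spatial dimension. First I would produce an admissible transport plan and compute its cost, then verify optimality, and finally check that the linear interpolation of inverse distribution functions indeed describes a constant-speed geodesic.

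To construct an admissible transport, consider the map $T=\theX_1\circ U_0:I\to I$ where $U_0=\theX_0^{-1}$. Since $\theX_1$ and $U_0$ are strictly increasing and absolutely continuous, $T$ is a monotone increasing map, and the change-of-variables formula together with $u_0(\theX_0(\xi))(\theX_0)_\xi(\xi)=1$ from \eqref{eq:density} shows that $T_\#(u_0\,\mathrm dx)=u_1\,\mathrm dx$. Performing the substitution $\xi=U_0(x)$ in the cost functional yields
\begin{align*}
\int_I |T(x)-x|^2 u_0(x)\dd x = \int_0^M [\theX_1(\xi)-\theX_0(\xi)]^2\dd\xi,
\end{align*}
which is a finite upper bound for $\wass(u_0,u_1)^2$.

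For optimality, I would invoke the characterization of optimal plans for quadratic cost on the real line: a transport map (or plan) is optimal if and only if its support is cyclically monotone, i.e., contained in the graph of a nondecreasing function. Since $T$ above is nondecreasing, it is optimal, and the lower bound matches the upper bound, giving \eqref{eq:miracle}. If one prefers to avoid invoking the general Brenier/Knott–Smith theorem, a quick self-contained argument works in one dimension: for any coupling $\pi$ of $(u_0,u_1)$ and any pair $(x,y),(x',y')$ in its support with $x<x'$ but $y>y'$, swapping endpoints strictly decreases the quadratic cost; hence the optimal $\pi$ must be supported on a nondecreasing graph, and since marginals determine a monotone map uniquely, it must equal $T$.

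Finally, for the geodesic assertion I would define $\theX_s=s\theX_1+(1-s)\theX_0$, observe that it is strictly increasing and Lipschitz (as a convex combination), and let $u_s\in\dens$ be the density whose inverse distribution function is $\theX_s$. Applying \eqref{eq:miracle} to any pair $0\le s\le t\le 1$ gives
\begin{align*}
\wass(u_s,u_t) = \bigg(\int_0^M (t-s)^2[\theX_1(\xi)-\theX_0(\xi)]^2\dd\xi\bigg)^{1/2} = (t-s)\wass(u_0,u_1),
\end{align*}
so $(u_s)_{0\le s\le 1}$ is a constant-speed curve joining $u_0$ to $u_1$ with length exactly $\wass(u_0,u_1)$, hence a minimal geodesic. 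The main obstacle in the whole argument is the optimality step; everything else is direct computation from \eqref{eq:density} and the definitions.
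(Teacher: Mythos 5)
The paper offers no proof for this lemma at all: it is labeled ``see e.g.\ \cite{VilBook}'' and taken as a known fact from the theory of optimal transport in one dimension. So there is nothing in the paper to compare against directly. Your argument is, however, the standard one and is correct. Constructing the monotone rearrangement $T=\theX_1\circ U_0$, checking admissibility via \eqref{eq:density} and the change of variables $\xi=U_0(x)$, and establishing optimality by cyclical monotonicity (or the elementary swapping argument in one dimension) is precisely how this identity is proved in the references. The geodesic part is also sound: $\theX_s$ inherits Lipschitz continuity and a uniform lower bound on its slope from $\theX_0,\theX_1$ (since $u_0,u_1\in\dens$ are bounded and bounded away from zero), so $u_s\in\dens$, and the two-point application of \eqref{eq:miracle} shows the curve has constant speed and realizes the distance.

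One very small point worth tightening: when you ``let $u_s\in\dens$ be the density whose inverse distribution function is $\theX_s$,'' it is worth saying explicitly that $(\theX_s)_\xi=s(\theX_1)_\xi+(1-s)(\theX_0)_\xi$ is bounded above and below by positive constants a.e.\ (because both $u_0$ and $u_1$ are in $L^\infty$ with positive essential infimum), so that $u_s=1/(\theX_s)_\xi\circ\theX_s^{-1}$ is indeed in $\dens$ and not merely a measure. Otherwise the proposal is complete.
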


\subsection{Properties of the energy}
Given $\prss:\setR_{\ge0}\to\setR$, let $\phi:\setR_{\ge0}\to\setR$ be an arbitrary second anti-derivative of $r\mapsto\prss'(r)/r$,
and define $\psi:\setR_+\to\setR$ by $\psi(s)=s\phi(1/s)$.
Introduce the functionals $\anrj$ on $\dens$ and $\nrj$ on $\xspc$, respectively, by \eqref{eq:anrj} and \eqref{eq:nrj}.
\begin{lem}
 \label{lem:phipsi}
 For every $u\in\dens$ with inverse distribution function $\theX\in\xspc$,
 one has
 \begin{align}
   \label{eq:convertnrj}
   \anrj(u) = \nrj(\theX).
 \end{align}
 Further, $\phi$ is strictly convex and satisfies
 \begin{align}
   \label{eq:prsstophi}
   \prss(r) = r\phi'(r) + \phi(0) - \phi(r) \quad \text{for all $r\ge 0$}.
 \end{align}
 Finally, $\psi(s)\to\infty$ for $s\downarrow0$, 
 and $\psi''$ is a positive non-increasing function.
\end{lem}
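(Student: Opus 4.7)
The plan is to establish \eqref{eq:prsstophi} first, and then derive the other three assertions from it by direct computation.

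First I would prove \eqref{eq:prsstophi}. Define $F(r):=r\phi'(r)-\phi(r)$ on $(0,\infty)$; by the definition $\phi''(r)=\prss'(r)/r$ and the product rule, $F'(r)=r\phi''(r)=\prss'(r)$, so $F-\prss$ is constant on $(0,\infty)$. To identify the constant I pass to the limit $r\downarrow0$: continuity of $\phi$ on $\setR_{\ge0}$ gives $\phi(r)\to\phi(0)$, the assumption $\prss(0)=0$ handles $\prss(r)\to 0$, and the delicate point is to show $r\phi'(r)\to 0$. This last statement is where I expect the only real obstacle: since $\phi'$ can diverge as $r\downarrow0$, one has to quantify the blow-up. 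However, $\lim_{r\downarrow0}\prss'(r)<\infty$ in \eqref{eq:pressume} combined with $\phi''(r)=\prss'(r)/r$ implies $\phi'(r)=O(|\ln r|)$ near $0$, hence $r\phi'(r)\to 0$. Thus the constant is $-\phi(0)$ and \eqref{eq:prsstophi} follows. Strict convexity of $\phi$ on $\setR_{\ge0}$ is then immediate from $\phi''(r)=\prss'(r)/r>0$ on $(0,\infty)$.

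Next, for \eqref{eq:convertnrj}, I would substitute $x=\theX(\xi)$ in both integrals that make up $\anrj(u)$ and apply \eqref{eq:density}. The potential contribution converts directly to $\int_0^M V\circ\theX\,d\xi$, while the internal-energy integrand transforms according to
\begin{equation*}
\phi\bigl(u(\theX(\xi))\bigr)\,\theX_\xi(\xi) \;=\; \phi\bigl(1/\theX_\xi(\xi)\bigr)\,\theX_\xi(\xi) \;=\; \psi\bigl(\theX_\xi(\xi)\bigr)
\end{equation*}
by the very definition \eqref{eq:psi} of $\psi$.

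For the properties of $\psi$, I would differentiate $\psi(s)=s\phi(1/s)$ twice and substitute \eqref{eq:prsstophi}; a short calculation yields the clean expressions $\psi'(s)=\phi(0)-\prss(1/s)$ and $\psi''(s)=\prss'(1/s)/s^2$. Positivity of $\psi''$ is then immediate from $\prss'>0$. The non-increasing property reduces, after computing $\psi'''$, to a sign condition on the second derivative of $s\mapsto\prss(1/s)$, which is exactly the content of the last clause of \eqref{eq:pressume}. Finally, the divergence $\psi(s)\to\infty$ as $s\downarrow 0$ is seen by integrating the formula for $\psi'$:
\begin{equation*}
\psi(s) \;=\; \psi(1) - (1-s)\phi(0) + \int_s^1 \prss(1/t)\,dt,
\end{equation*}
and changing variables $r=1/t$ turns the last integral into $\int_1^{1/s}\prss(r)/r^2\,dr$; since $\prss'(r)\to\infty$ forces $\prss(r)/r\to\infty$, one has $\prss(r)/r^2\ge c/r$ eventually, so the integral diverges as $s\downarrow 0$. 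Apart from the care required at the origin in establishing \eqref{eq:prsstophi}, all steps are routine consequences of the hypotheses in \eqref{eq:pressume}.
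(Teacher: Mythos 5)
Your proposal is correct and follows essentially the same approach as the paper: change of variables for \eqref{eq:convertnrj}, $\phi''(r)=\prss'(r)/r>0$ for strict convexity, the derivative identity for \eqref{eq:prsstophi}, the formulas $\psi'(s)=\phi(0)-\prss(1/s)$ and $\psi''(s)=\prss'(1/s)/s^2$, and the substitution $r=1/t$ for the divergence of $\psi$ at $0$. The one place you add genuine content is the constant identification in \eqref{eq:prsstophi}: the paper dispatches it with the terse remark that it ``follows by differentiation,'' implicitly assuming $r\phi'(r)\to 0$ as $r\downarrow 0$ without comment, whereas you spell out that $\lim_{r\downarrow0}\prss'(r)<\infty$ gives $\phi''(r)=O(1/r)$, hence $\phi'(r)=O(|\ln r|)$, hence $r\phi'(r)\to 0$ — a worthwhile detail to make explicit.
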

\begin{proof}
 We perform the change of variables $x=\theX(\xi)$ under the integrals in the definition \eqref{eq:anrj}
 and use \eqref{eq:density}:
 \begin{align*}
   \anrj(u) 
   &= \int_0^M \phi\big(u(\theX(\xi))\big)\theX_\xi(\xi)\dd\xi 
   + \int_0^M V(\theX(\xi))u(\theX(\xi))\theX_\xi(\xi)\dd\xi \\
   &= \int_0^M \phi\Big(\frac1{\theX_\xi(\xi)}\Big) \theX_\xi(\xi)\dd\xi
   + \int_0^M V(\theX(\xi))\dd\xi = \nrj(\theX).
 \end{align*}
 The claims about $\phi$ and $\psi$ are direct consequences of the hypotheses in \eqref{eq:pressume}.
 By definition, $\phi''(r)=\prss'(r)/r>0$ for all $r>0$, so $\phi$ is strictly convex.
 \eqref{eq:prsstophi} follows by differentiation of both sides w.r.t.\ $r>0$.
 It follows further that
 \begin{align*}
   \psi'(s) &= \phi(s^{-1})-s^{-1}\phi'(s^{-1}) = \phi(0) - \prss(s^{-1}), \\
   \psi''(s) &= s^{-2}\prss'(s^{-1}) > 0, \\
   \psi'''(s) &= - \dd^2\prss(s^{-1})/\dd s^2 \ge 0,
 \end{align*}
 so $\psi''$ is indeed positive and non-increasing.
 Finally,
 \begin{align*}
   \lim_{s\downarrow0}\psi(s) 
   = \psi(1) + \lim_{s\downarrow0}\int_s^1 \prss(\sigma^{-1})\dd\sigma 
   = \psi(1) + \lim_{r\to\infty}\int_1^r \frac{\prss(\rho)}{\rho^2}\dd\rho
   = + \infty
 \end{align*}
 since $\prss'(\rho)\to\infty$ for $\rho\to\infty$, and hence also $\prss(\rho)/\rho\to\infty$.
\end{proof}
The convexity of the functional $\anrj$ with respect to the Wasserstein metric is most conveniently studied
when the latter is considered as a functional of $\theX$ instead of $u$.
Indeed, by Lemma \ref{lem:miracle} above, 
geodesic interpolation between $u_0,u_1\in\dens$ corresponds to linear linterpolation between $\theX_0,\theX_1\in\xspc$.
\begin{lem}
 \label{lem:convex}
 The functional $\nrj$ is bounded from below,
 \begin{align}
   \label{eq:below}
   \nrj(\theX) \ge \nrjbelow := (b-a)\phi\Big(\frac{M}{b-a}\Big) + M\min_{x\in I}V(x),
 \end{align}
 and it is $(-\Lambda)$-convex on $\xspc$ with the $\Lambda$ given in \eqref{eq:lambda},
 i.e.,
 \begin{align}
   \label{eq:convex}
   \nrj\big((1-s)\theX^0+s\theX^1\big) 
   \le (1-s)\nrj(\theX^0) + s\nrj(\theX^1) + \frac{\Lambda s(1-s)}2 \int_0^M [\theX^0(\xi)-\theX^1(\xi)]^2\dd\xi
 \end{align}
 for all $\theX^0,\theX^1\in\xspc$, and every $s\in[0,1]$.
\end{lem}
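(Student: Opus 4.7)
The plan is to split $\nrj(\theX) = \int_0^M\psi(\theX_\xi)\dd\xi + \int_0^M V\circ\theX\dd\xi$ and to analyze the kinetic and potential parts separately; both the lower bound \eqref{eq:below} and the semiconvexity estimate \eqref{eq:convex} decouple along this decomposition.

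For the lower bound, I would apply Jensen's inequality to the kinetic part. Lemma \ref{lem:phipsi} guarantees $\psi''>0$, so $\psi$ is convex, and every $\theX\in\xspc$ has average slope $\frac1M\int_0^M \theX_\xi\dd\xi = \frac{b-a}M$. Jensen then yields
\[
\int_0^M \psi(\theX_\xi)\dd\xi \ge M\psi\Big(\frac{b-a}{M}\Big) = (b-a)\phi\Big(\frac{M}{b-a}\Big),
\]
where the equality uses the definition \eqref{eq:psi} of $\psi$. Since $V\circ\theX(\xi)\ge\min_{x\in I}V(x)$ pointwise, the potential part is at least $M\min_xV(x)$, and summing gives \eqref{eq:below}.

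For the $(-\Lambda)$-convexity, set $\theX^s=(1-s)\theX^0+s\theX^1$, so that $(\theX^s)_\xi=(1-s)(\theX^0)_\xi + s(\theX^1)_\xi$ pointwise. Convexity of $\psi$ gives
\[
\int_0^M \psi\big((\theX^s)_\xi\big)\dd\xi \le (1-s)\int_0^M \psi\big((\theX^0)_\xi\big)\dd\xi + s\int_0^M \psi\big((\theX^1)_\xi\big)\dd\xi,
\]
i.e.\ the kinetic part is honestly convex along linear interpolations. For the potential part, the definition \eqref{eq:lambda} of $\Lambda$ says $V_{xx}\ge-\Lambda$ on $I$, so $V+\tfrac{\Lambda}{2}(\cdot)^2$ is convex. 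Expanding the convexity inequality for this function and using the variance identity $(1-s)x_0^2+sx_1^2-\big((1-s)x_0+sx_1\big)^2 = s(1-s)(x_0-x_1)^2$ produces the pointwise semiconvexity estimate
\[
V\big((1-s)x_0+sx_1\big) \le (1-s)V(x_0)+sV(x_1) + \frac{\Lambda s(1-s)}{2}(x_0-x_1)^2 \qquad\text{for all }x_0,x_1\in I.
\]
Substituting $x_0=\theX^0(\xi)$, $x_1=\theX^1(\xi)$ and integrating over $\xi\in[0,M]$ yields the potential contribution to \eqref{eq:convex}; adding the two inequalities completes the proof.

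I expect no serious obstacle: the whole argument is pointwise in $\xi$, because both ingredients of $\nrj$ are integrals of local expressions in $\theX(\xi)$ or $\theX_\xi(\xi)$ with no coupling across values of $\xi$. The only conceptual step worth noting is that the $(-\Lambda)$-semiconvexity of $V$ on straight segments of $I$ lifts verbatim to $(-\Lambda)$-convexity of $\int_0^M V\circ\theX\dd\xi$ along linear interpolations of inverse distribution functions -- which, via Lemma \ref{lem:miracle}, is precisely the one-dimensional manifestation of McCann's displacement convexity mentioned in the introduction.
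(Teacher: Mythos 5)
Your proof is correct and follows the same route as the paper's: both decompose $\nrj$ into the internal energy part (treated via Jensen for the lower bound, and via convexity of $\psi$ for the semiconvexity estimate) and the potential part (treated via the pointwise bound $V\ge\min V$ for the lower bound, and via the pointwise $(-\Lambda)$-semiconvexity of $V$ for \eqref{eq:convex}). The only cosmetic difference is that the paper obtains the pointwise inequality for $V$ by a Taylor expansion with second-order remainder, while you derive it from convexity of $V+\tfrac{\Lambda}{2}x^2$ together with the variance identity; these are the same argument.
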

\begin{proof}
 Since $\phi$ is convex,
 the lower bound follows by Jensen's inequality:
 \begin{align*}
   \nrj(\theX) 
   \ge M\psi\bigg(\int_0^M \theX_\xi(\xi)\frac{\dd\xi}M\bigg) + \int_0^M \min_{x\in I}V(x)\dd\xi.
 \end{align*}
 By definition of $\psi$, this yields \eqref{eq:below}.
 Next, let $\theX^0,\theX^1\in\xspc$ and $s\in[0,1]$ be given.
 Since $\psi:\setR_+\to\setR$ is convex by hypothesis,
 it follows in particular that
 \begin{align*}
   \int_0^M \psi\big((1-s)\theX^0_\xi(\xi)+s\theX^1_\xi(\xi)\big)\dd\xi
   \le (1-s)\int_0^M \psi\big(\theX^0_\xi(\xi)\big)\dd\xi + s \int_0^M \psi\big(\theX^1_\xi(\xi)\big)\dd\xi.
 \end{align*}
 Further, a Taylor expansion yields
 \begin{align*}
   V\big((1-s)y+sz\big) \le (1-s)V(y)+sV(z) +\frac\Lambda2 s(1-s)(y-z)^2
 \end{align*}
 for arbitrary $y,z\in I$.
 In combination, this implies inequality \eqref{eq:convex}.
\end{proof}

\section{Spatial discretization}
\label{sct:discretex}
Inside the space $\xspc$ of inverse distribution functions, we define the finite-dimensional subspace $\xspcN$ of those functions,
which are piecewise affine with respect to a given partition $\theh$ of $[0,M]$ into sub-intervals.
Correspondingly, there is a finite-dimensional submanifold $\densN$ of $\dens$ consisting of those densities,
whose inverse distribution functions belong to $\xspcN$.
Densities in $\densN$ are piecewise constant.
Since we shall work simultaneously in the spaces $\densN$ and $\xspcN$, we need to introduce various notations.

\subsection{Ansatz spaces}
\label{sct:prediscrete}
\begin{figure}
 \centering
 \includegraphics[width=0.35\linewidth]{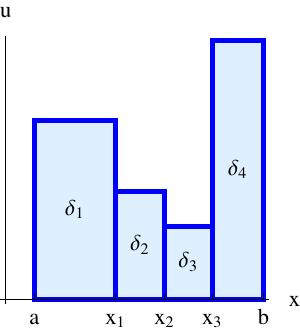}
 \hspace{0.1\linewidth}
 \includegraphics[width=0.35\linewidth]{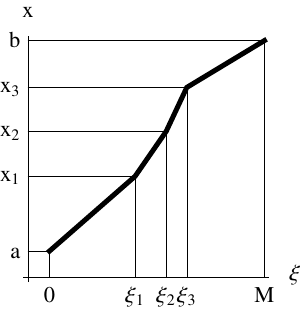}
 \caption{A typical density function $u\in\densN$ (left) and inverse distribution function $\theX\in\xspc$.}
 \label{fig:blocks}
\end{figure}
A vector $\theh=(\xi_0,\xi_1,\ldots,\xi_K)$ with entries $\xi_j$ such that 
\begin{align*}
 0=\xi_0 < \xi_1 < \cdots < \xi_K = M
\end{align*}
defines a partition of $[0,M]$ into $K$ sub-intervals.
We denote the lengths of the intervals by
\begin{align*}
 \delta_k = \xi_k-\xi_{k-1} \quad \text{for all $k=1,\ldots,K$},
\end{align*}
and introduce further
\begin{align}
 \label{eq:deltamin}
 \delmin(\theh) = \min_k\delta_k, \quad
 \delmax(\theh) = \max_k\delta_k, \quad
 \alpha(\theh) = \frac{\delmax(\theh)}{\delmin(\theh)}.
\end{align}
The associated $(K-1)$-dimensional reduction of the space $\xspc$ is given by
\begin{align*}
 \xspcN := \big\{ \theX\in\xspc\,\big| \, \text{$\theX$ piecewise affine on each $[\xi_{k-1},\xi_k]$, for $k=1,\ldots,K$}\big\}.
\end{align*}
Functions in $\xspcN$ are conveniently represented as linear combinations of
the $K+1$ hat functions $\hatf_0$ to $\hatf_K$ defined by
\begin{align*}
 \hatf_m(\xi) =
 \begin{cases}
   (\xi-\xi_{m-1})/\delta_m & \text{for $\xi_{m-1}\le\xi\le\xi_m$ (if $m\ge1$)}, \\
   (\xi_{m+1}-\xi)/\delta_{m+1} & \text{for $\xi_m\le\xi\le\xi_{m+1}$ (if $m\le K-1$)}, \\
   0 & \text{otherwise}.
 \end{cases}
\end{align*}
More precisely,
there is a one-to-one correspondence between functions $\theX\in\xspcN$ 
and vectors in
\begin{align*}
 \xseq := \big\{ \xvec=(x_1,\ldots,x_{K-1}) \,\big| \, a<x_1<x_2<\cdots<x_{K-1}<b \big\}\subset I^{K-1};
\end{align*}
this correspondence is established by means of $\convX_\theh:\xseq\to\xspcN$,
with
\begin{align}
 \label{eq:Xbyx}
 \theX = \convX_\theh[\xvec] = \sum_{k=0}^K x_k\hatf_k .
\end{align}
\begin{rmk}
 By definition, $\xvec\in\xseq$ has components $x_1$ to $x_{K-1}$.
 In \eqref{eq:Xbyx} above and for the rest of the paper,
 we shall always use the convention that
 \begin{align}
   \label{eq:convention}
   x_0 = a \quad\text{and}\quad x_K = b.
 \end{align}
 Also, we introduce in analogy to $\delmax(\theh)$ the mesh width
 \begin{align}
   \label{eq:delmaxx}
   \delmax(\xvec) = \max_k(x_k-x_{k-1}).
 \end{align}
\end{rmk}
Occasionally, it will be more convenient to work with vectors $\gvec \in \setR_+^K$ of difference quotients:
define $\convg_\theh:\xseq\to\setR_+^K$ by
\begin{align}
 \label{eq:g}
 \gvec = (z_1,\ldots,z_K) = \convg_\theh[\xvec]\in\setR_+^K, \quad\text{with}\quad z_k = \frac{x_k-x_{k-1}}{\delta_k},
\end{align}
using again our convention \eqref{eq:convention}.

Finally, we introduce the associated $(K-1)$-dimensional submanifold $\densN := \convf_\theh[\xseq]\subset\dens$
as the image of the injective map $\convf_\theh:\xseq\to\densN$
with
\begin{align}
 \label{eq:1to1}
 \convf_\theh[\xvec] = \sum_{k=1}^Ku_k\indy_{(x_{k-1},x_k]}, \quad\text{where} \quad u_k=\frac{\xi_k-\xi_{k-1}}{x_k-x_{k-1}}.
\end{align}

\subsection{Representation of the Wasserstein distance}
The Wasserstein distance between any two elements of $\densN$ is easy to compute using \eqref{eq:miracle}.
\begin{lem}
 \label{lem:wasserstein}
 Fix a discretization $\theh$, 
 and let $u^0,u^1\in\densN$ have representations $u^0 = \convf_\theh[\xvec^0]$, $u^1 = \convf_\theh[\xvec^1]$, 
 with $\xvec^0,\xvec^1\in\xseq$.
 Then 
 \begin{align}
   \label{eq:anothermiracle}
   \wass(u^0,u^1)^2 = (\xvec^0-\xvec^1)^T\Wmat(\xvec^0-\xvec^1)
 \end{align}
 with the symmetric tridiagonal matrix $\Wmat = (\Wmat_{m,k})_{m,k=1}^{K-1}\in\setR^{(K-1)\times(K-1)}$
 given by
 \begin{align}
   \label{eq:Wmat}
   \Wmat = \frac16
   \begin{pmatrix}
     2(\delta_1+\delta_2) &\delta_2  & \cdots & 0 & 0 
     \\ 
     \delta_2 & 2(\delta_2+\delta_3) & \cdots & 0 & 0 
     \\  
     \vdots & \vdots & \ddots & \vdots & \vdots 
     \\ 
     0 & 0 & \cdots  & 2(\delta_{K-2}+\delta_{K-1}) & \delta_{K-1} 
     \\ 
     0 & 0 & \cdots  & \delta_{K-1} & 2(\delta_{K-1}+\delta_K) 
   \end{pmatrix}.
 \end{align}
 Moreover, for every $v\in\setR^{K-1}$,
 \begin{align}
   \label{eq:posdef}
   \frac16\sum_{k=1}^{K-1}(\delta_k+\delta_{k+1})v_k^2
   \le v^T\Wmat v \le
   \frac12\sum_{k=1}^{K-1}(\delta_k+\delta_{k+1})v_k^2.    
 \end{align}
\end{lem}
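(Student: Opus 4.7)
The plan is to invoke the one-dimensional representation of the Wasserstein distance from Lemma~\ref{lem:miracle} and then compute the resulting integral by direct evaluation on each sub-interval $[\xi_{k-1},\xi_k]$.

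First, I would let $\theX^0=\convX_\theh[\xvec^0]$ and $\theX^1=\convX_\theh[\xvec^1]$ be the inverse distribution functions associated with $u^0$ and $u^1$, so that by \eqref{eq:miracle}
\begin{align*}
 \wass(u^0,u^1)^2 = \int_0^M \big[\theX^0(\xi)-\theX^1(\xi)\big]^2\dd\xi.
\end{align*}
Set $y_k = x_k^0 - x_k^1$ for $k=0,1,\ldots,K$; by convention \eqref{eq:convention} the endpoint values vanish, $y_0=y_K=0$. The difference $\theX^0-\theX^1$ is piecewise affine with nodal values $y_k$, so on $[\xi_{k-1},\xi_k]$ it equals $y_{k-1}\hatf_{k-1}+y_k\hatf_k$.

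Next, I would substitute $t=(\xi-\xi_{k-1})/\delta_k$ on each sub-interval and evaluate the elementary integral
\begin{align*}
 \int_{\xi_{k-1}}^{\xi_k}\big[y_{k-1}(1-t)+y_k t\big]^2\dd\xi = \frac{\delta_k}{3}\big(y_{k-1}^2+y_{k-1}y_k+y_k^2\big).
\end{align*}
Summing over $k=1,\ldots,K$ and collecting terms (using $y_0=y_K=0$ to drop the boundary cross terms), the coefficient of $y_k^2$ for $1\le k\le K-1$ becomes $(\delta_k+\delta_{k+1})/3$, and the coefficient of $y_{k-1}y_k$ for $2\le k\le K-1$ becomes $\delta_k/3$. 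Matching this against a quadratic form $y^T\Wmat y = \sum_k \Wmat_{k,k}y_k^2 + 2\sum_{k<\ell}\Wmat_{k,\ell}y_k y_\ell$ produces exactly the diagonal entries $\Wmat_{k,k}=(\delta_k+\delta_{k+1})/3$ and off-diagonal entries $\Wmat_{k-1,k}=\delta_k/6$ displayed in \eqref{eq:Wmat}, proving \eqref{eq:anothermiracle}.

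For the two-sided bound \eqref{eq:posdef}, I would apply the elementary inequality
\begin{align*}
 \tfrac12(a^2+b^2) \le a^2+ab+b^2 \le \tfrac32(a^2+b^2),
\end{align*}
valid for all $a,b\in\setR$ (the lower one from $ab\ge -\tfrac12(a^2+b^2)$ and the upper one from $ab\le\tfrac12(a^2+b^2)$), to each summand $v_{k-1}^2+v_{k-1}v_k+v_k^2$ in the representation $v^T\Wmat v = \tfrac13\sum_{k=1}^K\delta_k(v_{k-1}^2+v_{k-1}v_k+v_k^2)$ extended by $v_0=v_K=0$, and then relabel the sum via $\sum_{k=1}^K\delta_k(v_{k-1}^2+v_k^2) = \sum_{k=1}^{K-1}(\delta_k+\delta_{k+1})v_k^2$.

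There is no real obstacle here; the only point requiring care is the index bookkeeping for the boundary terms to ensure that the two ``missing'' rows/columns (corresponding to $y_0=y_K=0$) are consistently removed when passing to the $(K-1)\times(K-1)$ matrix $\Wmat$.
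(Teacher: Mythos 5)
Your proof is correct and follows essentially the same route as the paper's: both invoke the one-dimensional representation of the Wasserstein distance from Lemma~\ref{lem:miracle} and evaluate $\int_0^M(\theX^0-\theX^1)^2\dd\xi$ by elementary integration, both observe that $y_0=y_K=0$ kills the boundary indices, and both obtain \eqref{eq:posdef} from $\pm 2ab\le a^2+b^2$. The only (cosmetic) difference is how the computation is organized: the paper expands the square as $\sum_{m,k}y_my_k\int\hatf_m\hatf_k\dd\xi$, identifies $\Wmat$ as the Gram (mass) matrix of the hat functions, and computes its entries directly, whereas you integrate interval by interval and read off the coefficients from $\sum_k\tfrac{\delta_k}3(y_{k-1}^2+y_{k-1}y_k+y_k^2)$. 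Your per-interval form makes the two-sided bound \eqref{eq:posdef} slightly cleaner, since each summand is controlled independently by the elementary inequality $\tfrac12(a^2+b^2)\le a^2+ab+b^2\le\tfrac32(a^2+b^2)$; the paper instead reorganizes $3v^T\Wmat v$ into the equivalent expression $\sum_k\delta_k(v_{k-1}^2+v_k^2)+\sum_k\delta_kv_{k-1}v_k$ and applies the binomial inequality. Both arguments are complete and equivalent.
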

\begin{proof}
 Plugging the representation \eqref{eq:Xbyx} into the definition \eqref{eq:miracle} of the Wasserstein distance
 yields
 \begin{align*}
   \wass(u^0,u^1)^2
   = \sum_{m,k=0}^K \bigg([\xvec^0-\xvec^1]_m [\xvec^0-\xvec^1]_k \int_0^M \hatf_m(\xi)\hatf_k(\xi)\dd\xi\bigg).
 \end{align*}
 Since $x^0_0=x^1_0=a$ and $x^0_K=x^1_K=b$, the last sum actually runs only over indices from one to $K-1$.
 Therefore, in order to prove \eqref{eq:anothermiracle},
 it suffices to show that
 \begin{align*}
   \Wmat_{m,k} = \int_0^M \hatf_m(\xi)\hatf_k(\xi)\dd\xi
 \end{align*}
 for all $m,k=1,\ldots,K-1$.
 Since $\hatf_m$ has support $[\xi_{m-1},\xi_{m+1}]$ by definition,
 it follows that $\Wmat_{m,k}=0$ if $|m-k|\ge2$.
 Moreover, for $m=k$ we have
 \begin{align*}
   \int_0^M \hatf_m(\xi)^2\dd\xi
   &=\delta_m^{-2}\int_{\xi_{m-1}}^{\xi_m} (\xi-\xi_{m-1})^2\dd\xi 
   + \delta_{m+1}^{-2}\int_{\xi_m}^{\xi_{m+1}} (\xi_{m+1}-\xi)^2\dd\xi \\
   & = \delta_m\int_0^1\eta^2\dd\eta + \delta_{m+1}\int_0^1\zeta^2\dd\zeta
   = \frac13(\delta_m+\delta_{m+1}),
 \end{align*}
 and for $k=m+1$,
 \begin{align*}
   \int_0^M \hatf_m(\xi)\hatf_{m+1}(\xi)\dd\xi
   = \delta_m^{-2}\int_{\xi_m}^{\xi_{m+1}} (\xi_{m+1}-\xi)(\xi-\xi_m)\dd\xi
   = \delta_m\int_0^1(1-\eta)\eta\dd\eta
   = \frac16\delta_m.
 \end{align*}
 Finally, let $v\in\setR^{K-1}$ be given and observe that
 \begin{align*}
   3v^T\Wmat v
   &= \sum_{m=1}^{K-1} (\delta_m+\delta_{m+1})v_m^2 + \sum_{m=2}^{K-1} \delta_m v_mv_{m-1} \\
   &= \delta_1v_1^2+\delta_Kv_{K-1}^2 + \sum_{m=2}^{K-1} \delta_m (v_m^2+v_{m-1}^2) + \sum_{m=1}^K\delta_mv_mv_{m-1}.
 \end{align*}
 From here, \eqref{eq:posdef} is immediately deduced using binomial formulas.
\end{proof}

\subsection{Representation of the energy}
The restriction of the energy $\nrj$ from \eqref{eq:convertnrj} to the subspace $\xspc_\theh$ is naturally associated to
the functional $\nrj_\theh:\xseq\to\setR$ with
\begin{align*}
 \nrj_\theh(\xvec) := \nrj(\convX_\theh[\xvec]) = \anrj(\convf_\theh[\xvec]).
\end{align*}
By straight-forward calculations, one obtains the following more explicit representation.
\begin{lem}
 For every $\xvec\in\xseq$, we have
 \begin{align}
   \label{eq:nrjexpl}
   \nrj_\theh(\xvec)
   = \sum_{k=1}^K \delta_k\psi\bigg(\frac{x_k-x_{k-1}}{\delta_k}\bigg)
   + \int_0^M V \big(\convX_\theh[\xvec](\xi)\big)\dd\xi.
 \end{align}
 Moreover, the (Euclidean) gradient vector $\grd\nrj_\theh(\xvec)=\big(\partial_{x_m}\nrj_\theh(\xvec)\big)_{m=1}^{K-1}\in\setR^{K-1}$ 
 is given by
 \begin{align}
   \label{eq:gradient}
   \big[\grd\nrj_\theh(\xvec)\big]_m 
   &= - \psi'\bigg(\frac{x_{m+1}-x_m}{\delta_{m+1}}\bigg) + \psi'\bigg(\frac{x_m-x_{m-1}}{\delta_m}\bigg)
   + \int_0^M V_x\big(\convX_\theh[\xvec](\xi)\big)\hatf_m(\xi)\dd\xi,
 \end{align}
 and the Hessian matrix $\hess\nrj_\theh(\xvec)=\big(\partial_{x_m x_k}\nrj_\theh(\xvec)\big)_{m,k=1}^{K-1}\in\setR^{(K-1)\times(K-1)}$ is symmetric with
 \begin{align}
   \label{eq:hess}
   \big[\hess\nrj_\theh(\xvec)\big]_{m,k}
   = \begin{cases}
     \displaystyle{\frac1{\delta_{m+1}}\psi''\bigg(\frac{x_{m+1}-x_m}{\delta_{m+1}}\bigg) + \frac1{\delta_m}\psi''\bigg(\frac{x_m-x_{m-1}}{\delta_m}\bigg)} \\
     \hfill\displaystyle{+ \int_0^M V_{xx}\big(\convX_\theh[\xvec]\big)\hatf_m^2\dd\xi} \quad \text{if $m=k$}, \medskip \\
     \displaystyle{-\frac1{\delta_m}\psi''\bigg(\frac{x_m-x_{m-1}}{\delta_m}\bigg)
      + \int_0^M V_{xx}\big(\convX_\theh[\xvec]\big)\hatf_m\hatf_{m-1}\dd\xi} \quad \text{if $k=m-1$}, \\
    0 \hfill \text{if $1\le k<m-1$}.
    \end{cases}
  \end{align}
\end{lem}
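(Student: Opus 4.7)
The three formulas are proven by direct calculation, with no analytical subtlety beyond careful bookkeeping. The starting observation is that for $\theX = \convX_\theh[\xvec] = \sum_{k=0}^K x_k\hatf_k$, the derivative $\theX_\xi$ is piecewise constant on the partition $\theh$: on each interval $(\xi_{k-1},\xi_k)$ only $\hatf_{k-1}$ and $\hatf_k$ have nonzero derivatives, namely $-1/\delta_k$ and $+1/\delta_k$ respectively, so
\begin{align*}
 \theX_\xi(\xi) = \frac{x_k-x_{k-1}}{\delta_k} \quad\text{for $\xi\in(\xi_{k-1},\xi_k)$}.
\end{align*}
Inserting this into the definition \eqref{eq:nrj} of $\nrj$ and splitting the first integral into a sum over the intervals $(\xi_{k-1},\xi_k)$ immediately yields \eqref{eq:nrjexpl}. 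Note that the convention $x_0=a$, $x_K=b$ makes the endpoint terms $k=1$ and $k=K$ fit into the same formula as the interior ones.

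For the gradient \eqref{eq:gradient}, I would differentiate \eqref{eq:nrjexpl} with respect to $x_m$ for $1\le m\le K-1$. In the first sum, only the two terms $k=m$ and $k=m+1$ depend on $x_m$; the chain rule then produces $+\psi'((x_m-x_{m-1})/\delta_m)$ and $-\psi'((x_{m+1}-x_m)/\delta_{m+1})$. For the potential term, since $\convX_\theh[\xvec](\xi) = \sum_k x_k\hatf_k(\xi)$ is linear in $\xvec$, we have $\partial_{x_m}\convX_\theh[\xvec](\xi) = \hatf_m(\xi)$, and the chain rule gives the stated integral.

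The Hessian \eqref{eq:hess} then follows by one further differentiation. In the first sum the only cross-derivatives that can be nonzero are $\partial^2_{x_m x_m}$ (two contributions, one from each of the terms $k=m$, $k=m+1$) and $\partial^2_{x_m x_{m-1}}$ (one contribution, from $k=m$); differentiating $\pm\psi'$ produces $\pm(1/\delta_k)\psi''$ as written. All other entries vanish because each summand in \eqref{eq:nrjexpl} only couples the consecutive variables $x_{k-1}$ and $x_k$. For the potential, a second differentiation of $\int V_x(\convX_\theh[\xvec])\hatf_m\,\mathrm{d}\xi$ gives $\int V_{xx}(\convX_\theh[\xvec])\hatf_m\hatf_k\,\mathrm{d}\xi$, which vanishes whenever $|m-k|\ge 2$ thanks to the disjoint supports of the hat functions. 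Symmetry of the Hessian is automatic, so only the entries $k\le m$ need to be recorded.

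The only point requiring any care is the boundary: for $m=1$ and $m=K-1$, one of the ``neighbouring'' variables $x_{m-1}$ or $x_{m+1}$ coincides with the fixed value $a$ or $b$ and is therefore not differentiated. This is exactly accommodated by the convention \eqref{eq:convention}, so the formulas \eqref{eq:gradient} and \eqref{eq:hess} hold verbatim for $m\in\{1,\ldots,K-1\}$ without special cases.
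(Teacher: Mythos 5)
Your proof is correct and takes the natural route — the paper itself omits the proof, declaring the lemma a "straight-forward calculation," and your direct differentiation of \eqref{eq:nrjexpl} with careful attention to which summands involve $x_m$, together with the disjoint-support argument for the hat functions and the observation that the conventions $x_0=a$, $x_K=b$ absorb the boundary cases, is exactly that calculation.
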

Further, the functional $\nrj_\theh$ inherits boundedness and convexity from $\nrj$.
\begin{lem}
 $\nrj_\theh$ is bounded from below by $\nrjbelow$ defined in \eqref{eq:below}.
 Further, it is $(-\Lambda)$-convex with respect to the quadratic structure induced by $\Wmat$, i.e.,
 $\nabla^2\nrj_\theh(\xvec) + \Lambda\Wmat$ is positive semi-definite for arbitrary $\xvec^0\in\xseq$.
 Consequently,
 \begin{align}
   \label{eq:convex3}
   (\xvec^1-\xvec^0)^T\big(\grd\nrj_\theh(\xvec^1)-\grd\nrj_\theh(\xvec^0)\big)
   \ge -\Lambda(\xvec^1-\xvec^0)^T\Wmat(\xvec^1-\xvec^0)
 \end{align}
 holds for every $\xvec^0,\xvec^1\in\xseq$.
\end{lem}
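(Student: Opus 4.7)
The lower bound is immediate: by construction $\nrj_\theh(\xvec) = \nrj\bigl(\convX_\theh[\xvec]\bigr)$, and since $\convX_\theh[\xvec] \in \xspcN \subset \xspc$, the bound \eqref{eq:below} from Lemma \ref{lem:convex} applies unchanged, giving $\nrj_\theh(\xvec) \ge \nrjbelow$.

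For the $(-\Lambda)$-convexity statement, the key observation is that the representation map $\convX_\theh:\xseq\to\xspcN$ defined in \eqref{eq:Xbyx} is \emph{affine}: since the hat functions $\hatf_k$ depend only on $\theh$, one has
\begin{align*}
 \convX_\theh\bigl[(1-s)\xvec^0+s\xvec^1\bigr] = (1-s)\convX_\theh[\xvec^0] + s\,\convX_\theh[\xvec^1].
\end{align*}
Thus linear interpolation in $\xseq$ is transported to linear interpolation in $\xspc$, and the geodesic $(-\Lambda)$-convexity estimate \eqref{eq:convex} applied to $\theX^j := \convX_\theh[\xvec^j]$ yields
\begin{align*}
 \nrj_\theh\bigl((1-s)\xvec^0+s\xvec^1\bigr)
 \le (1-s)\nrj_\theh(\xvec^0) + s\,\nrj_\theh(\xvec^1) + \frac{\Lambda s(1-s)}{2}\int_0^M\bigl[\theX^0(\xi)-\theX^1(\xi)\bigr]^2\dd\xi.
\end{align*}
The integral on the right is precisely the squared Wasserstein distance between $\convf_\theh[\xvec^0]$ and $\convf_\theh[\xvec^1]$ by \eqref{eq:miracle}, which Lemma \ref{lem:wasserstein} evaluates as $(\xvec^0-\xvec^1)^T\Wmat(\xvec^0-\xvec^1)$. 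Plugging this in gives generalized convexity of $\nrj_\theh$ with respect to the quadratic form induced by $\Wmat$.

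Next I would translate this one-parameter inequality into the claimed statement about the Hessian. Fix $\xvec\in\xseq$ and $v\in\setR^{K-1}$, and consider the smooth function $g(s) := \nrj_\theh(\xvec+sv) + \tfrac{\Lambda}{2}s^2 v^T\Wmat v$. The convexity inequality just derived (applied with $\xvec^0=\xvec$ and $\xvec^1 = \xvec+v$, and standard scaling) shows that $g$ is convex on every line in $\xseq$, so $g''(0) = v^T\bigl(\hess\nrj_\theh(\xvec) + \Lambda\Wmat\bigr)v \ge 0$, yielding positive semi-definiteness. The monotonicity estimate \eqref{eq:convex3} is then immediate: the function $\xvec\mapsto \nrj_\theh(\xvec) + \tfrac\Lambda2\xvec^T\Wmat\xvec$ is convex, so its gradient is monotone in the usual sense, and subtracting $\Lambda\Wmat(\xvec^1-\xvec^0)$ from both sides of the monotonicity inequality gives exactly \eqref{eq:convex3}.

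There is no real obstacle here; the whole argument is a ``transport of structure'' through the affine parametrization $\convX_\theh$ together with the identification of the $\Wmat$-quadratic form as the squared $L^2$-distance of inverse distribution functions. The only point to be careful about is that $\xspcN$ is a subset (not all) of $\xspc$, but since both the lower bound and the convexity statement in Lemma \ref{lem:convex} hold on all of $\xspc$, no additional work is needed.
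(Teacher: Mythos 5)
Your proof is correct and follows essentially the same route as the paper: it rests on the affinity of $\convX_\theh$, Lemma \ref{lem:convex}, and the identification \eqref{eq:anothermiracle} of the $L^2$-distance of inverse distribution functions with the $\Wmat$-quadratic form. You spell out the passage from the one-parameter convexity inequality to positive semi-definiteness of the shifted Hessian and then to gradient monotonicity, whereas the paper compresses this to a remark about Taylor expansion; the content is the same.
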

\begin{proof}
 Boundedness from below is a trivial consequence of \eqref{eq:below} and the definition of $\nrj_\theh$ by restriction of $\nrj$.
 Convexity is a direct consequence of the convexity \eqref{eq:convex} of $\nrj$,
 taking into account \eqref{eq:anothermiracle}, and that $\convX_\theh$ is an affine map.
 The estimate \eqref{eq:convex3} is obtained by Taylor expansion.
\end{proof}

\section{Time-discrete evolution}
\label{sct:discretet}
Throughout this section, we fix a pair $\Delta=(\tau,\theh)$
of a time step with $\tau>0$ and a spatial discretization $\theh=(\xi_0,\ldots,\xi_K)$.

\subsection{Minimizing movements}
With the finite-dimensional manifold $\densN$ given at the end of Section \ref{sct:prediscrete} above,
the procedure \eqref{eq:dmm} can now be used to define inductively --- starting from a prescribed initial datum $u^0_\Delta\in\densN$ ---
a \emph{discrete solution} $u_\Delta:=(u_\Delta^n)_{n=0}^\infty$.
\begin{prp}
 \label{prp:existence}
 Assume that $\tau\Lambda<1$, with $\Lambda\ge0$ defined in \eqref{eq:lambda}.
 Recall the definition of $\anrj_\tau$ from \eqref{eq:mm}.
 Then, for every $u_\Delta^0\in\densN$, there is a sequence $(u_\Delta^n)_{n=0}^\infty$,
 such that $u_\Delta^n\in\densN$ is the unique minimizer of $\anrj_\tau(\cdot,u_\Delta^{n-1})$ on the restricted set $\densN$, for every $n\in\setN$.
 \smallskip

 \noindent
 Moreover, define the associated sequence $(\xvec_\Delta^n)_{n=0}^\infty$ of $\xvec_\Delta^n\in\xseq$ by
 \begin{align}
   \label{eq:ufromx}
   u_\Delta^n = \convf_\theh[\xvec_\Delta^n].
 \end{align}
 Then each $\xvec_\Delta^n$ is the unique solution $\xvec\in\xseq$ to the system Euler-Lagrange equations
 \begin{align}
   \label{eq:el}
   \frac1\tau \Wmat(\xvec-\xvec_\Delta^{n-1}) = - \grd\nrj_\theh(\xvec),
 \end{align}
 with $\grd\nrj_\theh(\xvec)$ explicitly given in \eqref{eq:gradient}.
\end{prp}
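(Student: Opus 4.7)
The plan is to push the problem forward through the bijection $\convf_\theh : \xseq \to \densN$ and apply direct methods on the finite-dimensional parameter space. Setting
\[
F(\xvec) := \anrj_\tau(\convf_\theh[\xvec], u_\Delta^{n-1}) = \frac{1}{2\tau} (\xvec - \xvec_\Delta^{n-1})^T \Wmat (\xvec - \xvec_\Delta^{n-1}) + \nrj_\theh(\xvec)
\]
by means of Lemma \ref{lem:wasserstein}, I would proceed inductively in $n$ and show that $F$ admits a unique minimizer $\xvec_\Delta^n \in \xseq$. The Euler-Lagrange equation \eqref{eq:el} is then the vanishing-gradient condition $\grd F(\xvec_\Delta^n) = 0$ read off from \eqref{eq:gradient}, and $u_\Delta^n := \convf_\theh[\xvec_\Delta^n]$ is the unique minimizer of $\anrj_\tau(\cdot, u_\Delta^{n-1})$ on $\densN$.

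For strict convexity, I would compute $\hess F(\xvec) = \tau^{-1} \Wmat + \hess \nrj_\theh(\xvec)$ and combine this with the preceding lemma, which says that $\hess \nrj_\theh(\xvec) + \Lambda \Wmat$ is positive semidefinite. This yields $\hess F(\xvec) \ge (\tau^{-1} - \Lambda) \Wmat$ in the sense of quadratic forms; since $\Wmat$ is strictly positive definite by \eqref{eq:posdef}, the hypothesis $\tau\Lambda < 1$ makes $\hess F(\xvec)$ strictly positive definite on all of $\xseq$. As $\xseq$ is convex (being cut out by linear inequalities), $F$ is strictly convex there.

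For existence I would use the direct method. Since $\xseq$ is a bounded open subset of $\setR^{K-1}$ and $F$ is continuous on it with $F \ge \nrjbelow$, the only obstruction to attaining the infimum in $\xseq$ is a minimizing sequence approaching $\partial \xseq$, i.e., some consecutive gap $x_k - x_{k-1}$ collapsing, equivalently $z_k := (x_k - x_{k-1})/\delta_k \downarrow 0$. By Lemma \ref{lem:phipsi}, $\delta_k \psi(z_k) \to +\infty$ along such a sequence. The remaining contributions to $F$ stay controlled: the quadratic Wasserstein term is bounded on the bounded set $\xseq$, the potential term by $M\|V\|_\infty$, and the sum $\sum_{\ell\neq k}\delta_\ell\psi(z_\ell)$ from below by Jensen's inequality applied to the constraint $\sum_\ell \delta_\ell z_\ell = b-a$ (or, equivalently, by invoking the global bound $\nrj_\theh \ge \nrjbelow$ of Lemma \ref{lem:convex}). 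Therefore $F \to +\infty$ at $\partial \xseq$, the infimum is attained in the interior, and strict convexity upgrades attainment to uniqueness. The same argument applied to critical points of $F$ gives uniqueness of solutions to \eqref{eq:el}.

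The one mildly delicate point, and what I expect to be the main obstacle, is this coercivity step: a naive inspection of \eqref{eq:nrjexpl} leaves open the possibility that a divergent $\delta_k\psi(z_k)$ be offset either by the potential or by some $\delta_\ell\psi(z_\ell)$ whose $z_\ell$ is simultaneously forced to be large (and where $\psi$ could a priori tend to $-\infty$ at infinity, depending on the sign of $\phi(0)$). Ruling this out requires essential use of the mass constraint $\sum_\ell \delta_\ell = M$, which is most cleanly packaged via the uniform lower bound \eqref{eq:below}.
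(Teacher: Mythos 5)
Your proof is correct and follows the paper's strategy almost verbatim: transfer the minimization to $\xseq$ via $\convf_\theh$, obtain strict convexity from the $(-\Lambda)$-convexity of $\nrj_\theh$ combined with $\tau\Lambda<1$ and positive definiteness of $\Wmat$, and prove coercivity at $\partial\xseq$ by exploiting that $\psi(s)\to\infty$ as $s\downarrow 0$ together with a Jensen estimate tied to the constraint $\sum_\ell\delta_\ell z_\ell = b-a$. One small caveat: the parenthetical claim that the Jensen step is equivalent to invoking the global lower bound $\nrj_\theh\ge\nrjbelow$ is not quite right, since that bound controls the full internal energy including the diverging term $\delta_k\psi(z_k)$ and therefore does not on its own bound $\sum_{\ell\ne k}\delta_\ell\psi(z_\ell)$ from below; the Jensen argument you wrote out is the one that actually works, and it is also the paper's.
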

\begin{proof}
 By definition of $\densN$ as the image of $\xseq$ under $\convf_\theh$,
 it suffices to prove unique solvability of the minimization problems
 \begin{align*}
   \nrj_\Delta(\xvec,\xvec_\Delta^{n-1}) := \frac1{2\tau}(\xvec-\xvec_\Delta^{n-1})^T\Wmat (\xvec-\xvec_\Delta^{n-1}) + \nrj_\Delta(\xvec)
   \quad\to\quad \min
 \end{align*}
 for $\xvec\in\xseq$.
 To this end, observe that
 \begin{align*}
   \nrj_\Delta(\xvec,\xvec_\Delta^{n-1})
   = \nrj_\theh(\xvec)+\frac\Lambda2(\xvec-\xvec_\Delta^{n-1})^T\Wmat(\xvec-\xvec_\Delta^{n-1})
   + \frac12(\tau^{-1}-\Lambda)(\xvec-\xvec_\Delta^{n-1})^T\Wmat(\xvec-\xvec_\Delta^{n-1})
 \end{align*}
 for every $\xvec\in\xseq$.
 From Lemma \ref{lem:convex}, we know that the sum of the first two terms on the right-hand side constitutes a convex function in $\xvec\in\xseq$.
 Since $\tau\Lambda<1$, and since $\Wmat$ is positive definite by Lemma \ref{lem:wasserstein},
 the last term is strictly convex.
 Thus, $\nrj_\Delta(\cdot,\xvec_\Delta^{n-1})$ possesses at most one critical point in $\xseq$.

 To show the existence of a minimizer, let $(\xvec^{(j)})_{j\in\setN}$ be a minimizing sequence for $\nrj_\Delta(\cdot,\xvec^{n-1})$ in $\xseq_\theh$.
 Since each of the $K-1$ components $x^{(j)}_k$ belongs to the compact interval $I$,
 we may assume without loss of generality that $\xvec^{(j)}$ converges to some $\xvec^*\in I^{K-1}$.
 It remains to be proven that $\xvec^*\in\xseq_\theh$.
 Since $(\xvec^{(j)})_{j\in\setN}$ is a minimizing sequence, $\nrj_\Delta(\xvec^{(j)},\xvec^{n-1})$ is bounded,
 and so, for every $m\in\{1,\ldots,K\}$:
 \begin{align*}
   C
   & \ge \frac1{2\tau}(\xvec^{(j)}-\xvec_\Delta^{n-1})^T\Wmat(\xvec^{(j)}-\xvec_\Delta^{n-1}) 
   + \sum_{k=1}^K\delta_k\psi\bigg(\frac{x^{(j)}_k-x^{(j)}_{k-1}}{\delta_k}\bigg)
   + \int_0^M V\big(\convX_\theh[\xvec^{(j)}](\xi)\big)\dd\xi \\
   &\ge \delta_m\psi\bigg(\frac{x^{(j)}_m-x^{(j)}_{m-1}}{\delta_m}\bigg) 
   + (M-\delta_m)\psi\Big(\frac{b-a}{M-\delta_m}\Big) + M \min_{x\in I}V(x).
 \end{align*}
 Since $\psi(s)\to\infty$ for $s\downarrow0$, 
 this implies that $x^n_m-x^n_{m-1}\ge\epsilon\delta_m >0$ with some $\epsilon>0$ for all $n\in\setN$,
 and thus also $x^*_m-x^*_{m-1}\ge\epsilon\delta_m>0$, implying $\xvec^*\in\xseq$.
 By continuity of $\nrj_\Delta(\cdot,\xvec_\Delta^{n-1})$ on $\xseq$, 
 it follows that $\xvec^*$ is a minimizer.

 The argument shows that $\nrj_\Delta(\cdot,\xvec_\Delta^{n-1})$ possesses a unique critical point in $\xseq$,
 thus the corresponding Euler-Lagrange equations \eqref{eq:el} are uniquely solvable.
\end{proof}

\subsection{Euler-Lagrange equations for the difference quotients}
The analysis that follows will be based primarily on another representation of the system \eqref{eq:el} of Euler-Lagrange equations,
which is formulated in terms of the difference quotients $\gvec_\Delta^n=(z_1^n,\ldots,z_K^n)$ introduced in \eqref{eq:g}:
\begin{align}
 \label{eq:zfromx}
 z_k^n = \frac{x^n_k-x^n_{k-1}}{\delta_k} = \frac1{u^n_k}.
\end{align}
To begin with, we introduce quadratic analogues $\hatF_1,\ldots,\hatF_K:[0,M]\to\setR$ of the piecewise linear hat functions $\hatf_0,\ldots,\hatf_K$ as follows.
Let the numbers $\gamma_1,\ldots,\gamma_K\in(-1,1)$ be defined by $\gamma_1=\gamma_K=0$, and
\begin{align}
 \label{eq:gamma}
 \gamma_k = \frac{\delta_{k+1}-\delta_{k-1}}{\delta_{k+1}+2\delta_k+\delta_{k-1}} \quad\text{for $k=2,\ldots,K-1$}.
\end{align}
Then the $\hatF_k$ are given by
\begin{align*}
 \hatF_k(\xi) =
 \begin{cases}
   \frac{1+\gamma_k}{2\delta_{k-1}}(\xi-\xi_{k-2})^2 & \text{if $\xi_{k-2}\le\xi\le\xi_{k-1}$}, \\
   \frac{1-\gamma_k^2}4(\delta_{k+1}+\delta_k+\delta_{k-1})
   - \frac1{4\delta_k}\big(2\xi-(\xi_k+\xi_{k-1})-\gamma_k\delta_k\big)^2
   & \text{if $\xi_{k-1}\le\xi\le\xi_k$}, \\
   \frac{1-\gamma_k}{2\delta_{k+1}}(\xi_{k+1}-\xi)^2 & \text{if $\xi_k\le\xi\le\xi_{k+1}$}, \\
   0 & \text{otherwise}
 \end{cases}
\end{align*}
for $k=2,\ldots,K-1$, and by
\begin{align*}
 \hatF_1(\xi) &=
 \begin{cases}
   \frac12(\delta_1+\delta_2)-\frac1{2\delta_1}\xi^2 & \text{if $0\le\xi\le\xi_1$}, \\
   \frac1{2\delta_2}(\xi_2-\xi)^2 & \text{if $\xi_1\le\xi\le\xi_2$}, \\
   0 & \text{otherwise}, 
 \end{cases}
 \\
 \hatF_K(\xi) &=
 \begin{cases}
   \frac1{2\delta_{K-1}}(\xi-\xi_{K-2})^2 & \text{if $\xi_{K-2}\le\xi\le\xi_{K-1}$}, \\
   \frac12(\delta_K+\delta_{K-1})-\frac1{2\delta_K}(M-\xi)^2 & \text{if $\xi_{K-1}\le\xi\le M$}, \\
   0 & \text{otherwise}.
 \end{cases}
\end{align*}
\begin{lem}
 For each $k=2,\ldots,K-1$, the function $\hatF_k$ is supported on $[\xi_{k-2},\xi_{k+1}]$ and satisfies
 \begin{align}
   \label{eq:bighat}
   - (\hatF_k)_\xi = (1-\gamma_k)\hatf_k-(1+\gamma_k)\hatf_{k-1},
 \end{align}
 and we have $(\hatF_1)_\xi=-\hatf_1$ and $(\hatF_K)_\xi=\hatf_{K-1}$.
\end{lem}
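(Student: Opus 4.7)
The proof is essentially a direct computation from the piecewise definitions, and the plan is to verify the identity on each of the three sub-intervals composing the support of $\hatF_k$, plus a continuity check at the interior knots. I will focus on the generic case $2\le k\le K-1$; the boundary cases $k=1,K$ are handled identically but with fewer sub-intervals.

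First, I would record that the support claim is immediate from the piecewise definition: $\hatF_k$ is defined to vanish outside $[\xi_{k-2},\xi_{k+1}]$, and the two expressions $\frac{1+\gamma_k}{2\delta_{k-1}}(\xi-\xi_{k-2})^2$ and $\frac{1-\gamma_k}{2\delta_{k+1}}(\xi_{k+1}-\xi)^2$ vanish at $\xi=\xi_{k-2}$ and $\xi=\xi_{k+1}$ respectively, so no jumps occur at the boundary of the support. Before differentiating the piecewise formula, I would need to check continuity at the interior knots $\xi_{k-1}$ and $\xi_k$, and this is exactly where the specific choice of $\gamma_k$ in \eqref{eq:gamma} enters: the matching condition at $\xi=\xi_{k-1}$ reduces, after a short algebraic manipulation, to
\begin{align*}
\gamma_k\,(\delta_{k+1}+2\delta_k+\delta_{k-1}) \;=\; \delta_{k+1}-\delta_{k-1},
\end{align*}
which is precisely \eqref{eq:gamma}; the matching at $\xi=\xi_k$ yields the same identity by symmetry. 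Once continuity is established, $\hatF_k$ is Lipschitz and its derivative can be computed pointwise almost everywhere.

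Next I would differentiate on each piece and compare with the right-hand side of \eqref{eq:bighat}, substituting the explicit formulas for $\hatf_{k-1}$ and $\hatf_k$ on that piece. On $[\xi_{k-2},\xi_{k-1}]$ one has $\hatf_k\equiv 0$ and $\hatf_{k-1}(\xi)=(\xi-\xi_{k-2})/\delta_{k-1}$, so $-(\hatF_k)_\xi = -(1+\gamma_k)(\xi-\xi_{k-2})/\delta_{k-1}$ matches $-(1+\gamma_k)\hatf_{k-1}$. On $[\xi_k,\xi_{k+1}]$ the mirror computation works. The only nontrivial piece is the middle one, $[\xi_{k-1},\xi_k]$, where both hat functions are nonzero; there $(\hatF_k)_\xi=-\delta_k^{-1}\bigl(2\xi-(\xi_k+\xi_{k-1})-\gamma_k\delta_k\bigr)$, and using $\hatf_k(\xi)=(\xi-\xi_{k-1})/\delta_k$ and $\hatf_{k-1}(\xi)=(\xi_k-\xi)/\delta_k$ together with $\gamma_k\delta_k=\gamma_k(\xi_k-\xi_{k-1})$, one checks by direct expansion that $-(\hatF_k)_\xi=(1-\gamma_k)\hatf_k-(1+\gamma_k)\hatf_{k-1}$.

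For $k=1$ and $k=K$ I would just repeat the same two-step argument on the two relevant sub-intervals, noting that the "outer" quadratic branch has been dropped because it would lie outside $[0,M]$; the continuity check at $\xi_1$ (respectively $\xi_{K-1}$) goes through trivially, which is consistent with the convention $\gamma_1=\gamma_K=0$. The main obstacle is not conceptual but bookkeeping: one must be careful with signs and with the factor $\gamma_k\delta_k=\gamma_k(\xi_k-\xi_{k-1})$ when expanding the square in the middle branch. As a sanity check, I would also note that the choice of $\gamma_k$ is equivalent to requiring $\int_0^M\bigl[(1-\gamma_k)\hatf_k-(1+\gamma_k)\hatf_{k-1}\bigr]\dd\xi=0$, which is exactly the compatibility condition needed for an antiderivative of the right-hand side of \eqref{eq:bighat} to be supported in $[\xi_{k-2},\xi_{k+1}]$; this gives a coordinate-free interpretation of the otherwise mysterious formula \eqref{eq:gamma}.
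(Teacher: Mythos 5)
Your proof is correct and follows the same direct-computation route the paper has in mind (its own proof is a single line: ``This follows directly from the definition.''). Your write-up usefully makes explicit the continuity check at $\xi_{k-1}$ and $\xi_k$ that justifies the specific choice \eqref{eq:gamma} of $\gamma_k$ — indeed this is essential, since the identity \eqref{eq:bighat} is used later for an integration by parts and would otherwise pick up Dirac contributions — and the closing remark that \eqref{eq:gamma} is equivalent to $\int_0^M\big[(1-\gamma_k)\hatf_k-(1+\gamma_k)\hatf_{k-1}\big]\dd\xi=0$ is a genuine clarifying observation, though not part of the paper's argument.
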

\begin{proof}
 This follows directly from the definition.
\end{proof}
Next, we define the matrix $\Wnew=(\Wnew_{m,m'})_{m,m'=1}^K\in\setR^{K\times K}$ by
\begin{align}
 \label{eq:3}
 \Wnew_{m,k} = \int_{\xi_{k-1}}^{\xi_k} \hatF_m(\xi)\dd\xi.
\end{align}
The matrix $\Wnew$ essentially plays the same role for the $\gvec_\Delta^n$ as $\Wmat$ for the $\xvec_\Delta^n$.
Its entries are more complicated, but still can be calculated explicitly.
\begin{lem}
 The matrix $\Wnew$ is tri-diagonal and has entries
 \begin{align}
   \label{eq:wnew}
   \Wnew_{m,k} =
   \begin{cases}
     \frac16\delta_m^2+\frac{1-\gamma_m}4\delta_m\delta_{m+1}+\frac{1+\gamma_m}4\delta_m\delta_{m-1} & \text{if $2\le k=m\le K-1$}, \\
     \frac13\delta_1^2+\frac12\delta_2\delta_1 & \text{if $k=m=1$}, \\
     \frac13\delta_K^2+\frac12\delta_{K-1}\delta_K & \text{if $k=m=K$}, \\
     \frac{1+\gamma_m}6\delta_{m-1}^2 & \text{if $k=m-1$}, \\
     \frac{1-\gamma_m}6\delta_{m+1}^2 & \text{if $k=m+1$}, \\
     0 & \text{otherwise}.
   \end{cases}
 \end{align}
\end{lem}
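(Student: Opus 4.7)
My plan is to compute $\Wnew_{m,k}$ entry by entry directly from the definition \eqref{eq:3}, using the explicit piecewise-quadratic formulas for the $\hatF_k$. The tridiagonality is immediate from the support structure: inspection of the piecewise definitions shows $\supp\hatF_m \subseteq [\xi_{m-2},\xi_{m+1}]$ for $2\le m \le K-1$, while $\supp\hatF_1\subseteq[0,\xi_2]$ and $\supp\hatF_K\subseteq[\xi_{K-2},M]$. Thus $\Wnew_{m,k} = 0$ whenever $|m-k|\ge 2$, and the remaining work is to compute five explicit integrals.

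For the super- and sub-diagonal entries with $2\le m\le K-1$, the integration is over an interval on which $\hatF_m$ is a single squared term, so the computation is routine:
\begin{align*}
\int_{\xi_{m-2}}^{\xi_{m-1}} \frac{1+\gamma_m}{2\delta_{m-1}}(\xi-\xi_{m-2})^2\dd\xi = \frac{1+\gamma_m}{6}\delta_{m-1}^2,
\end{align*}
and analogously for $\Wnew_{m,m+1}$. The boundary entries $\Wnew_{1,2}$ and $\Wnew_{K,K-1}$ are of the same flavor, yielding $\delta_2^2/6$ and $\delta_{K-1}^2/6$, respectively, consistent with the convention $\gamma_1=\gamma_K=0$. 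The diagonal boundary entries $\Wnew_{1,1}$ and $\Wnew_{K,K}$ are direct integrations of the explicit formulas for $\hatF_1$ over $[0,\xi_1]$ (resp. $\hatF_K$ over $[\xi_{K-1},M]$) plus the pure-square pieces, producing $\tfrac13\delta_1^2+\tfrac12\delta_1\delta_2$ and $\tfrac13\delta_K^2+\tfrac12\delta_{K-1}\delta_K$.

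The main work concerns the diagonal entry $\Wnew_{m,m}$ with $2\le m\le K-1$, where $\hatF_m$ restricted to $[\xi_{m-1},\xi_m]$ is a constant minus a shifted square. Performing the substitution $\eta = 2\xi - (\xi_m+\xi_{m-1})$ splits the integral into a constant part and a symmetric quadratic part, which evaluate cleanly to
\begin{align*}
\int_{\xi_{m-1}}^{\xi_m}\hatF_m(\xi)\dd\xi = \frac{(1-\gamma_m^2)\delta_m(\delta_{m-1}+\delta_m+\delta_{m+1})}{4} - \frac{(1+3\gamma_m^2)\delta_m^2}{12}.
\end{align*}

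The main obstacle is now reconciling this expression with the simpler form claimed in \eqref{eq:wnew}. I would use the algebraic identities
\begin{align*}
1-\gamma_m = \frac{2(\delta_m+\delta_{m-1})}{\delta_{m-1}+2\delta_m+\delta_{m+1}},\qquad 1+\gamma_m = \frac{2(\delta_m+\delta_{m+1})}{\delta_{m-1}+2\delta_m+\delta_{m+1}},
\end{align*}
which follow directly from \eqref{eq:gamma}. These give a clean factorization of $1-\gamma_m^2$ in terms of the mesh widths, and a short computation reorganizing the monomials in $\delta_{m-1},\delta_m,\delta_{m+1}$ then confirms the claimed identity $\Wnew_{m,m} = \tfrac16\delta_m^2 + \tfrac{1-\gamma_m}{4}\delta_m\delta_{m+1} + \tfrac{1+\gamma_m}{4}\delta_m\delta_{m-1}$. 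Since the calculation is purely algebraic and the $\gamma_m$-dependence has been exhibited explicitly, I expect this bookkeeping step to be the only place where care is required; the rest of the lemma reduces to standard polynomial integration.
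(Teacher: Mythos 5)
Your proposal is correct and takes the same route as the paper, which simply describes the computation as tedious but straightforward integration of quadratic polynomials combined with the definition \eqref{eq:gamma}; you have filled in precisely those details. The intermediate expression $\int_{\xi_{m-1}}^{\xi_m}\hatF_m\,\dd\xi = \tfrac{(1-\gamma_m^2)\delta_m(\delta_{m-1}+\delta_m+\delta_{m+1})}{4}-\tfrac{(1+3\gamma_m^2)\delta_m^2}{12}$ and the factorization identities for $1\pm\gamma_m$ check out algebraically, and the boundary cases are handled consistently with $\gamma_1=\gamma_K=0$.
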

\begin{proof}
 The explicit representation of $\Wnew$ is obtained by a tedious, but straight-forward computation that 
 requires nothing but integration of quadratic polynomials and the use of \eqref{eq:gamma}.
\end{proof}
\begin{lem}\label{lem:gamma}
 For the solution $(\xvec_\Delta^n)_{n=0}^\infty$ obtained in Proposition \ref{prp:existence}, 
 let $(\gvec_\Delta^n)_{n=0}^\infty$ be the associated sequence from \eqref{eq:zfromx}.
 Then each $\gvec_\Delta^n$ satisfies the following system of Euler-Lagrange equations:
 \begin{align}
   \label{eq:el3}
   \begin{split}
     \frac1\tau\big[\Wnew(\gvec_\Delta^n-\gvec_\Delta^{n-1})\big]_m
     &= (1-\gamma_m)\psi'(z^n_{m+1}) - 2\psi'(z^n_m) + (1+\gamma_m)\psi'(z^n_{m-1}) \\
     & \qquad -\sum_{k=m-1}^{m+1} z^n_k\int_{\xi_{k-1}}^{\xi_k}V_{xx}\big(\convX_\theh[\xvec^n]\big)\hatF_m\dd\xi
   \end{split}
 \end{align}
 for every $m=2,\ldots,K-1$,
 and
 \begin{align}
   \label{eq:el2a}
   \frac1\tau[\Wnew(\gvec_\Delta^n-\gvec_\Delta^{n-1})]_1
   &= \psi'(z^n_2)-\psi'(z^n_1) -\sum_{k=1}^2 z^n_k\int_{\xi_{k-1}}^{\xi_k}V_{xx}\big(\convX_\theh[\xvec^n]\big)\hatF_1\dd\xi, \\
   \label{eq:el2b}
   \frac1\tau[\Wnew(\gvec_\Delta^n-\gvec_\Delta^{n-1})]_K
   &= \psi'(z^n_{K-1})-\psi'(z^n_K) -\sum_{k=K-1}^K z^n_k\int_{\xi_{k-1}}^{\xi_k}V_{xx}\big(\convX_\theh[\xvec^n]\big)\hatF_K\dd\xi.
 \end{align}
\end{lem}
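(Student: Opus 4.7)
The plan is to derive \eqref{eq:el3}--\eqref{eq:el2b} from the Euler--Lagrange equations \eqref{eq:el} by taking linear combinations whose coefficients are dictated by the identity \eqref{eq:bighat} between $(\hatF_m)_\xi$ and the hat functions $\hatf_m,\hatf_{m-1}$. Throughout, I abbreviate $\theX^i:=\convX_\theh[\xvec_\Delta^i]$.

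For each interior $m\in\{2,\ldots,K-1\}$ I form $(1-\gamma_m)$ times the $m$-th scalar component of \eqref{eq:el} minus $(1+\gamma_m)$ times the $(m-1)$-st component. Inserting \eqref{eq:gradient} on the right, the $\psi'$-contributions collapse to $(1-\gamma_m)\psi'(z_{m+1}^n)-2\psi'(z_m^n)+(1+\gamma_m)\psi'(z_{m-1}^n)$ since $(1-\gamma_m)+(1+\gamma_m)=2$, while the $V$-contribution equals $-\int_0^M V_x\!\circ\!\theX^n\cdot[(1-\gamma_m)\hatf_m-(1+\gamma_m)\hatf_{m-1}]\dd\xi$. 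By \eqref{eq:bighat} this is $\int_0^M V_x\!\circ\!\theX^n\,(\hatF_m)_\xi\dd\xi$, and an integration by parts---with boundary term vanishing since $\supp\hatF_m\subset(0,M)$ for interior $m$---together with the chain rule $(V_x\!\circ\!\theX^n)_\xi=V_{xx}(\theX^n)\,z_k^n$ on each $(\xi_{k-1},\xi_k)$ reproduces precisely the $V_{xx}$-sum on the right-hand side of \eqref{eq:el3}.

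On the left-hand side, I observe that $[\Wmat\,v]_m=\int_0^M\bigl(\sum_k v_k\hatf_k\bigr)\hatf_m\dd\xi$; for $v=\xvec_\Delta^n-\xvec_\Delta^{n-1}$ the implicit boundary entries at $k=0,K$ vanish by \eqref{eq:convention}, so the sum reduces to $\theX^n-\theX^{n-1}$. The same weighted combination therefore produces $\int_0^M(\theX^n-\theX^{n-1})[(1-\gamma_m)\hatf_m-(1+\gamma_m)\hatf_{m-1}]\dd\xi=-\int_0^M(\theX^n-\theX^{n-1})(\hatF_m)_\xi\dd\xi$. Integration by parts (again with vanishing boundary) followed by the piecewise identity $(\theX^n-\theX^{n-1})_\xi=z_k^n-z_k^{n-1}$ on $(\xi_{k-1},\xi_k)$ turns this into $\sum_k(z_k^n-z_k^{n-1})\int_{\xi_{k-1}}^{\xi_k}\hatF_m\dd\xi=[\Wnew(\gvec_\Delta^n-\gvec_\Delta^{n-1})]_m$ by the definition \eqref{eq:3} of $\Wnew$. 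Dividing by $\tau$ yields \eqref{eq:el3}.

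For the endpoints $m=1$ and $m=K$ no combination is needed; the one-sided identities $(\hatF_1)_\xi=-\hatf_1$ and $(\hatF_K)_\xi=\hatf_{K-1}$ allow the same integration-by-parts trick applied directly to the first and last scalar components of \eqref{eq:el}. The boundary terms now require two independent facts: at $\xi=M$ (for $m=1$) or $\xi=0$ (for $m=K$) one uses $\hatF_1(M)=0=\hatF_K(0)$, while at the remaining endpoint one uses $(\theX^n-\theX^{n-1})(0)=0=(\theX^n-\theX^{n-1})(M)$ on the matrix side and the Neumann condition $V_x(a)=V_x(b)=0$ from \eqref{eq:Vassume} on the gradient side. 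The main obstacle is purely bookkeeping---signs, the index shift between the $(K-1)$-dimensional system in $\xvec$ and the $K$-dimensional system in $\gvec$, and checking that every boundary contribution really vanishes. The conceptual content sits entirely in \eqref{eq:bighat}: the weights $\gamma_m$ and the quadratic basis $\hatF_m$ have been chosen precisely so that this discrete integration-by-parts closes up.
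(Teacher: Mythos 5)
Your proposal is correct and, for the interior indices $m$, follows the paper's overall blueprint: form the combination $(1-\gamma_m)$ times the $m$th row of \eqref{eq:el} minus $(1+\gamma_m)$ times the $(m-1)$th, invoke \eqref{eq:bighat} and integrate by parts to handle the $V$-term, and use $\supp\hatF_m=[\xi_{m-2},\xi_{m+1}]$ to reduce the resulting $V_{xx}$-sum to $k=m-1,\ldots,m+1$. Where you genuinely depart from the paper is in converting the $\Wmat$-combination into the $\Wnew$-form. The paper does this by explicit coefficient algebra: it expands the combined row in terms of $x^n_{m-2},\ldots,x^n_{m+1}$, telescopes into the $z^n_k$, and verifies the scalar identity $\tfrac{1-\gamma_m}{2}\delta_{m+1}+\tfrac{1-3\gamma_m}{6}\delta_m=\tfrac{1+\gamma_m}{2}\delta_{m-1}+\tfrac{1+3\gamma_m}{6}\delta_m=\Wnew_{m,m}/\delta_m$ coming from \eqref{eq:gamma}. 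You instead use the integral representation $\Wmat_{m,k}=\int_0^M\hatf_m\hatf_k\dd\xi$ from Lemma \ref{lem:wasserstein} to write $[\Wmat(\xvec^n-\xvec^{n-1})]_m=\int_0^M(\theX^n-\theX^{n-1})\hatf_m\dd\xi$, so the very same \eqref{eq:bighat}-plus-integration-by-parts step that handles the $V$-term also produces $[\Wnew(\gvec^n-\gvec^{n-1})]_m$ via the definition \eqref{eq:3}. This buys a conceptually unified argument in which one mechanism (the discrete integration by parts encoded in \eqref{eq:bighat}) does all the work and the definition of $\gamma_m$ never needs to be re-examined; the paper's direct computation is less elegant but avoids the additional integration by parts and reliance on the integral formula for $\Wmat$. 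Your treatment of the boundary rows $m=1,K$, using $(\hatF_1)_\xi=-\hatf_1$, $(\hatF_K)_\xi=\hatf_{K-1}$, with boundary contributions killed by $\hatF_1(M)=\hatF_K(0)=0$, $(\theX^n-\theX^{n-1})(0)=(\theX^n-\theX^{n-1})(M)=0$, and the Neumann hypothesis \eqref{eq:Vassume}, matches the paper.
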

\begin{proof}
 Fix an index $m\in\{2,\ldots,K-1\}$.
 With $\gamma_m$ given by \eqref{eq:gamma}, 
 multiply the $m$th and the $(m-1)$th component of the Euler-Lagrange system \eqref{eq:el} by $1-\gamma_m$ and $1+\gamma_m$, respectively,
 and substract the latter from the first.
 This yields
 \begin{align*}
   &\frac{1}{\tau}\Big(\frac{1-\gamma_m}6\delta_{m+1}x^n_{m+1} 
   + \Big[\frac{1-\gamma_m}3(\delta_m+\delta_{m+1})-\frac{1+\gamma_m}6\delta_m\Big]x^n_m \\
   & \qquad + \Big[\frac{1-\gamma_m}6 \delta_{m-1}-\frac{1+\gamma_m}3(\delta_m+\delta_{m-1})\Big]x^n_{m-1}
   - \frac{1+\gamma_m}6\delta_{m-1}x^n_{m-2}\Big) \\
   &= (1-\gamma_m)\psi'(z^n_{m+1}) - 2\psi'(z^n_m) + (1+\gamma_m)\psi'(z^n_{m-1}) \\
   & \qquad - \int_0^M V_x\big(\convX_\theh[\xvec^n]\big)\big[(1-\gamma_m)\hatf_m-(1+\gamma_m)\hatf_{m-1}\big]\dd\xi.
 \end{align*}
 The expression on the left-hand side can be rewritten as
 \begin{align*}
   &\frac{1-\gamma_m}6\delta_{m+1}(x^n_{m+1}-x^n_m)
   +\Big[\frac{1-\gamma_m}2\delta_{m+1}+\frac{1-3\gamma_m}6\delta_m\Big]x^n_m \\
   & \qquad -\Big[\frac{1+\gamma_m}2\delta_{m-1}+\frac{1+3\gamma_m}6\delta_m\Big]x^n_{m-1}
   +\frac{1+\gamma_m}6\delta_{m-1}(x^n_{m-1}-x^n_{m-1}) \\
   & = \Wnew_{m,m+1}z^n_{m+1} + \Wnew_{m,m}z^n_m + \Wnew_{m,m-1}z^n_{m-1},
 \end{align*}
 where we have used the relation
 \begin{align*}
   \frac{1-\gamma_m}2\delta_{m+1}+\frac{1-3\gamma_m}6\delta_m
   =\frac{1+\gamma_m}2\delta_{m-1}+\frac{1+3\gamma_m}6\delta_m
   =\frac{1}{\delta_m}\Wnew_{m,m},
 \end{align*}
 which is a consequence of our definition of $\gamma_m$ in \eqref{eq:gamma}.
 Thus, we obtain
 \begin{align}
   \label{eq:el2}
   \begin{split}
     \frac1\tau[\Wnew(\gvec_\Delta^n-\gvec_\Delta^{n-1})]_m 
     &= (1-\gamma_m)\psi'(z^n_{m+1}) - 2\psi'(z^n_m) + (1+\gamma_m)\psi'(z^n_{m-1}) \\
     & \qquad - \int_0^M V_x\big(\convX_\theh[\xvec_\Delta^n]\big)\big[(1-\gamma_m)\hatf_m-(1+\gamma_m)\hatf_{m-1}\big]\dd\xi.
   \end{split}
 \end{align}
 Using property \eqref{eq:bighat} of the function $\hatF_m$, and integrating by parts, we arrive at
 \begin{align}
   \label{eq:cleveribp}
   \begin{split}
     &- \int_0^M V_x\big(\convX_\theh[\xvec_\Delta^n]\big)\big[(1-\gamma_m)\hatf_m-(1+\gamma_m)\hatf_{m-1}\big]\dd\xi
     = \int_0^M V_x\big(\convX_\theh[\xvec_\Delta^n]\big)(\hatF_m)_\xi\dd\xi \\
     &\qquad = V_x\big(\convX_\theh[\xvec_\Delta^n]\big)(\hatF_m)_\xi\Big|_{\xi=0}^{\xi=M} 
     -\int_0^M \big(\convX_\theh[\xvec_\Delta^n]\big)_\xi V_{xx}\big(\convX_\theh[\xvec_\Delta^n]\big) \hatF_m\dd\xi.
   \end{split}
 \end{align}
 The boundary terms in the second line vanish, since $\hatF_m(0)=\hatF_m(M)=0$.
 Finally, observe that $\big(\convX_\theh[\xvec_\Delta^n]\big)_\xi(\xi)=z_k$ for all $\xi_{k-1}<\xi<\xi_k$. 
 Insert the result into \eqref{eq:el2} to arrive at \eqref{eq:el3}, 

 Equations \eqref{eq:el2a} and \eqref{eq:el2b} are directly obtained from \eqref{eq:el} for $m=1$ and for $m=K-1$, respectively,
 after an integration by parts like in \eqref{eq:cleveribp}.
 The boundary term vanishes because of hypothesis \eqref{eq:Vassume}.
\end{proof}

\subsection{Energy dissipation}
The estimates derived below are at the core of our convergence proof in Section \ref{sct:convergence}.
We start with two energy-type estimates, which are classical in the theory of gradient flows.
We recall that $u_\Delta^n=\convf_\theh[\xvec_\Delta^n]$, 
and that the gradient $\grd\nrj_\theh$ is explicitly given in \eqref{eq:gradient}. 
\begin{lem}
 For every $N\in\setN$,
 \begin{align}
   \label{eq:we}
   \frac1{2\tau} \sum_{n=1}^N \wass\big(u_\Delta^n,u_\Delta^{n-1}\big)^2
   \le \anrj_\theh(u_\Delta^0) -\anrj_\theh(u_\Delta^N),    \\
   \label{eq:ee}
   \frac\tau2 \sum_{n=1}^N [\grd\nrj_\theh(\xvec_\Delta^n)]^T\Wmat^{-1}[\grd\nrj_\theh(\xvec_\Delta^n)]
   \le \anrj_\theh(u_\Delta^0) -\anrj_\theh(u_\Delta^N).
 \end{align}
\end{lem}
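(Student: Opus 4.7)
The plan is to establish \eqref{eq:we} by the standard minimizing-movement argument, and then to deduce \eqref{eq:ee} as an essentially algebraic consequence of \eqref{eq:we} via the Euler-Lagrange equation \eqref{eq:el}.

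For \eqref{eq:we}, I would exploit that $u_\Delta^n$ minimizes $\anrj_\tau(\,\cdot\,,u_\Delta^{n-1})$ on $\densN$ (Proposition \ref{prp:existence}). In particular, the admissible competitor $u=u_\Delta^{n-1}\in\densN$ yields
\begin{align*}
 \frac1{2\tau}\wass(u_\Delta^n,u_\Delta^{n-1})^2 + \anrj(u_\Delta^n)
 = \anrj_\tau(u_\Delta^n,u_\Delta^{n-1})
 \le \anrj_\tau(u_\Delta^{n-1},u_\Delta^{n-1})
 = \anrj(u_\Delta^{n-1}).
\end{align*}
Summing this telescoping inequality from $n=1$ to $N$ yields \eqref{eq:we}.

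For \eqref{eq:ee}, I would convert the Wasserstein increment into a gradient quantity. Setting $\xvec_\Delta^n=\convf_\theh^{-1}(u_\Delta^n)$, the identity \eqref{eq:anothermiracle} gives
\begin{align*}
 \wass(u_\Delta^n,u_\Delta^{n-1})^2
 =(\xvec_\Delta^n-\xvec_\Delta^{n-1})^T\Wmat(\xvec_\Delta^n-\xvec_\Delta^{n-1}).
\end{align*}
Now solve the Euler-Lagrange equation \eqref{eq:el} for the increment to obtain
\begin{align*}
 \xvec_\Delta^n-\xvec_\Delta^{n-1}=-\tau\,\Wmat^{-1}\grd\nrj_\theh(\xvec_\Delta^n),
\end{align*}
which is legitimate since $\Wmat$ is (strictly) positive definite by Lemma \ref{lem:wasserstein}. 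Substituting this expression into the preceding identity produces
\begin{align*}
 \wass(u_\Delta^n,u_\Delta^{n-1})^2
 =\tau^2\,[\grd\nrj_\theh(\xvec_\Delta^n)]^T\Wmat^{-1}[\grd\nrj_\theh(\xvec_\Delta^n)].
\end{align*}
Inserting this back into \eqref{eq:we} and cancelling one factor of $\tau$ yields \eqref{eq:ee}.

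There is no real obstacle here; both estimates are standard manifestations of the variational structure. The only point worth flagging is that the minimality argument gives \eqref{eq:we} directly, whereas \eqref{eq:ee} additionally requires the first-order optimality condition \eqref{eq:el} together with the invertibility of $\Wmat$. Both ingredients have already been established earlier in the section, so the proof reduces to a two- or three-line calculation.
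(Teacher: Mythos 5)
Your proof is correct and follows essentially the same route as the paper: the minimality comparison with the competitor $u_\Delta^{n-1}$ followed by telescoping gives \eqref{eq:we}, and \eqref{eq:ee} is then read off from the Euler--Lagrange equation \eqref{eq:el} together with the representation \eqref{eq:anothermiracle} of the Wasserstein distance. The paper phrases the second step by multiplying \eqref{eq:el} by $\Wmat^{-1/2}$ and taking Euclidean norms rather than solving for the increment, but the computation is algebraically identical to yours.
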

\begin{proof}
 Since $u_\Delta^n$ minimizes $\anrj_\Delta(\cdot,u_\Delta^{n-1})$,
 we have in particular $\anrj_\Delta(u_\Delta^n,u_\Delta^{n-1})\le \anrj_\Delta(u_\Delta^{n-1},u_\Delta^{n-1})$,
 which implies that
 \begin{align*}
   \frac1{2\tau}\wass(u_\Delta^n,u_\Delta^{n-1})^2 \le \anrj(u_\Delta^{n-1})-\anrj(u_\Delta^{n}).
 \end{align*}
 Evaluation of the telescopic sum yields \eqref{eq:we}.
 To obtain \eqref{eq:ee},
 multiply the system \eqref{eq:el} of Euler-Lagrange equations by $(\tau/2)^{1/2}\Wmat^{-1/2}$,
 take the Euclidean norm on both sides, and sum over $n=1$ to $n=N$:
 \begin{align*}
   \frac\tau2 \sum_{n=1}^N [\grd\nrj_\theh(\xvec_\Delta^n)]^T\Wmat^{-1}[\grd\nrj_\theh(\xvec_\Delta^n)]
   = \sum_{n=1}^N\bigg(\frac1{2\tau}(\xvec_\Delta^n-\xvec_\Delta^{n-1})^T\Wmat (\xvec_\Delta^n-\xvec_\Delta^{n-1})\bigg).
 \end{align*}
 Insert this in \eqref{eq:we} to obtain \eqref{eq:ee}.
\end{proof}
\begin{prp}
 For every $N\in\setN$,
 \begin{align}
   \label{eq:ei}
   \tau\sum_{n=1}^N \sum_{k=1}^{K-1}\frac{\big(\psi'(z^n_{k+1})-\psi'(z^n_k)\big)^2}{\delta_k+\delta_{k+1}}
   \le \alpha(\theh)\, \big[\anrj(u_\Delta^0)-\anrj(u_\Delta^N)+MT \sup_{x\in I}\big(V_x(x)^2\big)\big],
 \end{align}
 with $T=N\tau$ and the ratio $\alpha(\theh)$ being defined in \eqref{eq:deltamin}.
\end{prp}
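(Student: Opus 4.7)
Estimate \eqref{eq:ei} is a discrete Fisher-information dissipation bound, so the natural starting point is the energy-dissipation inequality \eqref{eq:ee}. The crucial observation is that the first-difference quantities $a_m^n := \psi'(z_{m+1}^n) - \psi'(z_m^n)$ appearing on the left-hand side of \eqref{eq:ei} are, up to the potential correction $b_m^n := \int_0^M V_x(\convX_\theh[\xvec_\Delta^n])\hatf_m\dd\xi$, precisely the components of the Euler-Lagrange gradient: by \eqref{eq:gradient},
\[
[\grd\nrj_\theh(\xvec_\Delta^n)]_m = -a_m^n + b_m^n.
\]
What is therefore needed is a lower bound of $v^T\Wmat^{-1}v$ by a weighted $\ell^2$-sum with weights $1/(\delta_m+\delta_{m+1})$.

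Setting $\Dmat := \operatorname{diag}(\delta_k+\delta_{k+1})$, the upper bound in \eqref{eq:posdef} reads $\Wmat\preceq\tfrac12\Dmat$ as a Löwner inequality, and operator monotonicity of inversion on positive-definite matrices gives $\Wmat^{-1}\succeq 2\Dmat^{-1}$, i.e.\ $v^T\Wmat^{-1}v\ge 2\sum_m v_m^2/(\delta_m+\delta_{m+1})$. A slightly weaker but more transparent route, extracting only the spectral bound $\lambda_{\max}(\Wmat)\le\delta_{\max}$ from \eqref{eq:posdef} and combining with $\delta_m+\delta_{m+1}\ge 2\delta_{\min}$, produces $v^T\Wmat^{-1}v \ge (2/\alpha(\theh))\sum_m v_m^2/(\delta_m+\delta_{m+1})$; this weaker version is the one that naturally produces the mesh-ratio prefactor $\alpha(\theh)$ stated in \eqref{eq:ei}.

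The remainder is Young's inequality plus bookkeeping. From $(-a+b)^2\ge\tfrac12 a^2-b^2$ applied componentwise, $(a_m^n)^2\le 2\,[\grd\nrj_\theh(\xvec_\Delta^n)]_m^2 + 2(b_m^n)^2$. The potential piece is controlled by the crude pointwise bound $|b_m^n|\le(\sup_I|V_x|)\int_0^M\hatf_m\dd\xi = (\sup_I|V_x|)(\delta_m+\delta_{m+1})/2$, yielding $\sum_m(b_m^n)^2/(\delta_m+\delta_{m+1})\le\tfrac{M}{2}(\sup_I|V_x|)^2$. Dividing the componentwise Young estimate by $\delta_m+\delta_{m+1}$ and summing over $m$, then inserting the $\Wmat^{-1}$-bound from the previous paragraph, multiplying by $\tau$, summing over $n = 1,\ldots,N$, and invoking \eqref{eq:ee} to bound the time sum of $\Wmat^{-1}$-norms of gradients by $2[\anrj(u_\Delta^0)-\anrj(u_\Delta^N)]$ (together with $N\tau = T$) yields \eqref{eq:ei}. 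The one substantive choice throughout is how tightly to invert $\Wmat$ against $\Dmat$ in the second paragraph: the sharpest Löwner-order argument gives universal constants, while the coarser spectral route naturally produces the $\alpha(\theh)$ prefactor as written.
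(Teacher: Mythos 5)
Your proposal follows the paper's own argument step for step: start from the gradient-norm dissipation bound \eqref{eq:ee}, pass from the $\Wmat^{-1}$-norm to the weighted $\ell^2$-norm with weights $(\delta_k+\delta_{k+1})^{-1}$ using the upper estimate in \eqref{eq:posdef}, split off the potential contribution by the inequality $(x-y)^2\ge\tfrac12 x^2-y^2$, and bound $\sum_k\big(\int_0^M V_x\hatf_k\dd\xi\big)^2$ crudely through $\int_0^M\hatf_k\dd\xi=\tfrac12(\delta_k+\delta_{k+1})$. Your observation that $\Wmat\preceq\tfrac12\Dmat$ gives the sharper bound $v^T\Wmat^{-1}v\ge 2v^T\Dmat^{-1}v$, eliminating the $\alpha(\theh)$ factor altogether, is a genuine (if minor) improvement that the paper does not exploit; you are right that the coarser spectral estimate $\Wmat\preceq\delmax(\theh)I$ combined with $\delta_k+\delta_{k+1}\ge2\delmin(\theh)$ is what produces the factor $\alpha(\theh)$ as stated. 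One small note: if you trace your constants all the way through, what actually comes out is $\tau\sum_n\sum_k (\cdot)\le 2\alpha(\theh)\big[\anrj(u_\Delta^0)-\anrj(u_\Delta^N)\big]+\alpha(\theh)MT\sup(V_x^2)$, which is a factor of $2$ worse on the energy term than \eqref{eq:ei} as printed; the paper's own intermediate inequality carries a spurious $\tfrac12$ in front of $[\grd\nrj_\theh]^T\Wmat^{-1}[\grd\nrj_\theh]$ that does not follow from the preceding estimates, so the printed constant appears to be slightly optimistic. This has no bearing on the validity of the approach or on anything downstream in the paper.
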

\begin{proof}
 From the upper estimate on $\Wmat$ in \eqref{eq:posdef},
 it follows that
 \begin{align*}
   [\grd\nrj_\theh(\xvec_\Delta^n)]^T&\Wmat^{-1}[\grd\nrj_\theh(\xvec_\Delta^n)]
   \ge \delmax(\theh)^{-1}[\grd\nrj_\theh(\xvec_\Delta^n)]^T [\grd\nrj_\theh(\xvec_\Delta^n)] \\
   &\ge \delmax(\theh)^{-1}\sum_{k=1}^{K-1}\bigg[\psi'(z^n_{k+1})-\psi'(z^n_k) - \int_0^M V_x\big(\convX_\theh[\xvec^n]\big)\hatf_k\dd\xi\bigg]^2 \\
   &\ge \delmax(\theh)^{-1}\sum_{k=1}^{K-1}
   \bigg[\frac12\big(\psi'(z^n_{k+1})-\psi'(z^n_k)\big)^2 
   - \bigg(\int_0^M V_x\big(\convX_\theh[\xvec^n]\big)\hatf_k\dd\xi\bigg)^2\bigg],
 \end{align*}
 where we used that $(x-y)^2\ge x^2/2-y^2$ for arbitrary $x,y\in\setR$.
 Now, on one hand,
 \begin{align*}
   \sum_{k=1}^{K-1}\bigg(\int_0^M V_x\big(\convX_\theh[\xvec^n]\big)\hatf_k\dd\xi\bigg)^2
   & \le \sup_{x\in I}\big(V_x(x)^2\big)\,\sum_{k=1}^{K-1}\bigg(\int_0^M \hatf_k\dd\xi\bigg)^2 \\
   & =\sup_{x\in I}\big(V_x(x)^2\big)\,\sum_{k=1}^{K-1}\frac{(\delta_k+\delta_{k+1})^2}4 
   \le M\delmax(\theh) \sup_{x\in I}\big(V_x(x)^2\big).
 \end{align*}
 And on the other hand,
 \begin{align*}
   \sum_{k=1}^{K-1}\big(\psi'(z^n_{k+1})-\psi'(z^n_k)\big)^2
   \ge 2\delmin(\theh)\,\sum_{k=1}^{K-1}\frac{\big(\psi'(z^n_{k+1})-\psi'(z^n_k)\big)^2}{\delta_k+\delta_{k+1}}.
 \end{align*}
 Combining these estimates, we obtain
 \begin{align*}
   \sum_{k=1}^{K-1}\frac{\big(\psi'(z^n_{k+1})-\psi'(z^n_k)\big)^2}{\delta_k+\delta_{k+1}}
   \le \frac{\delmax(\theh)}{\delmin(\theh)}
   \bigg(\frac12[\grd\nrj_\theh(\xvec_\Delta^n)]^T\Wmat^{-1}[\grd\nrj_\theh(\xvec_\Delta^n)]
   +  M \sup_{x\in I}\big(V_x(x)^2\big)\bigg).
 \end{align*}
 Multiply by $\tau$ and sum over $n=1$ to $n=N$.
 An application of \eqref{eq:ee} yields \eqref{eq:ei}.
\end{proof}

\section{Qualitative properties of the discretization}
\label{sct:quality}
Throughout this section, we fix a space-time discretization $\Delta=(\tau,\theh)$
and consider a given discrete solution $u_\Delta=(u_\Delta^n)_{n=0}^\infty$.

\subsection{Metric contraction}
\label{sct:contract}
One of the fundamental properties of our solution scheme is the preservation of the contraction property \eqref{eq:contract}.
\begin{prp}
 If $v_\Delta=(v_\Delta^n)_{n=0}^\infty$ is any other discrete solution,
 then 
 \begin{align}
   \label{eq:contract3}
   \wass\big(u_\Delta^n,v_\Delta^n)^2
   \le (1-2\Lambda\tau)^{-n}\wass\big(u_\Delta^0,v_\Delta^0\big)^2
 \end{align}
 for all $n\in\setN$.
\end{prp}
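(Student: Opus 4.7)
The plan is to reduce the contraction estimate in Wasserstein distance to a matrix-weighted estimate on the difference of the two associated sequences $\xvec_\Delta^n, \yvec_\Delta^n \in \xseq$, where $u_\Delta^n = \convf_\theh[\xvec_\Delta^n]$ and $v_\Delta^n = \convf_\theh[\yvec_\Delta^n]$. By Lemma \ref{lem:wasserstein}, proving \eqref{eq:contract3} amounts to showing
\[
 (\xvec_\Delta^n - \yvec_\Delta^n)^T \Wmat (\xvec_\Delta^n - \yvec_\Delta^n)
 \le (1-2\Lambda\tau)^{-1}\,(\xvec_\Delta^{n-1} - \yvec_\Delta^{n-1})^T \Wmat (\xvec_\Delta^{n-1} - \yvec_\Delta^{n-1})
\]
for each $n \ge 1$, and then iterating this one-step estimate.

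First I would write the Euler-Lagrange system \eqref{eq:el} for both solutions and subtract them to obtain
\[
 \frac1\tau \Wmat\bigl[(\xvec_\Delta^n - \yvec_\Delta^n)-(\xvec_\Delta^{n-1} - \yvec_\Delta^{n-1})\bigr]
 = -\bigl[\grd\nrj_\theh(\xvec_\Delta^n) - \grd\nrj_\theh(\yvec_\Delta^n)\bigr].
\]
Testing with the difference $\xvec_\Delta^n - \yvec_\Delta^n$ and applying the monotonicity inequality \eqref{eq:convex3}, which encodes the $(-\Lambda)$-convexity of $\nrj_\theh$ with respect to $\Wmat$, bounds the right-hand side from above by $\Lambda(\xvec_\Delta^n - \yvec_\Delta^n)^T\Wmat(\xvec_\Delta^n - \yvec_\Delta^n)$.

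Next, I would exploit the elementary identity
\[
 2 a^T\Wmat(a-b) = a^T\Wmat a - b^T\Wmat b + (a-b)^T\Wmat(a-b),
\]
applied with $a = \xvec_\Delta^n - \yvec_\Delta^n$ and $b = \xvec_\Delta^{n-1} - \yvec_\Delta^{n-1}$. Since $\Wmat$ is positive definite (by Lemma \ref{lem:wasserstein}), the last quadratic term is non-negative and may be dropped. Combining with the estimate from the previous step yields
\[
 (1-2\Lambda\tau)\,(\xvec_\Delta^n - \yvec_\Delta^n)^T \Wmat (\xvec_\Delta^n - \yvec_\Delta^n)
 \le (\xvec_\Delta^{n-1} - \yvec_\Delta^{n-1})^T \Wmat (\xvec_\Delta^{n-1} - \yvec_\Delta^{n-1}),
\]
where the prefactor is strictly positive by the standing assumption $\tau\Lambda < 1$ from Proposition \ref{prp:existence}. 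Dividing and iterating from $n$ down to $0$ produces \eqref{eq:contract3} after invoking \eqref{eq:anothermiracle} once more.

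There is no real obstacle here; every ingredient is already in place. The only subtle point to double-check is that the Wasserstein geometry on $\densN$ is faithfully represented by the quadratic form induced by $\Wmat$ on $\xseq$ (so that the monotonicity inequality and the algebraic identity live in the same inner-product space), which is precisely what Lemma \ref{lem:wasserstein} and the restriction formula for $\nrj_\theh$ provide. Note that the mild loss compared to the continuous estimate \eqref{eq:contract}---namely $(1-2\Lambda\tau)^{-n/2}$ instead of $e^{\Lambda n\tau}$---arises naturally because we discard the non-negative term $(a-b)^T\Wmat(a-b)$ rather than keeping it as in the Taylor expansion of the exponential.
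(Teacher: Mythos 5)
Your proof is correct and follows essentially the same route as the paper: subtract the Euler--Lagrange systems, invoke the monotonicity inequality \eqref{eq:convex3}, discard a non-negative quadratic term, and iterate the resulting one-step contraction. The paper organizes the algebra by multiplying through by $\Wmat^{\pm1/2}$ and taking Euclidean norms rather than by testing with the difference vector and using the polarization identity, but the dropped term (your $(a-b)^T\Wmat(a-b)$ equals the paper's $\tau^2\|\Wmat^{-1/2}(\grd\nrj_\theh(\xvec^n_\Delta)-\grd\nrj_\theh(\yvec^n_\Delta))\|^2$ via the Euler--Lagrange relation) and the final recursion are identical.
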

\begin{rmk}
 Since $(1-2\Lambda\tau)^n<\exp(-2\Lambda n\tau)$ for every $n\in\setN$,
 estimate \eqref{eq:contract3} is slightly worse for every $\tau>0$ than the limiting estimate \eqref{eq:contract}.
\end{rmk}
\begin{proof}
 For $\xvec_\Delta$, $\yvec_\Delta$ such that $u_\Delta^n=\convf_\theh[\xvec_\Delta^n]$, $v_\Delta^n=\convf_\theh[\yvec_\Delta^n]$
 we know by Proposition \ref{prp:existence} that
 \begin{align*}
   \Wmat(\xvec_\Delta^n-\xvec_\Delta^{n-1}) = -\tau\grd\nrj_\theh(\xvec_\Delta^n)
   \quad \text{and} \quad
   \Wmat(\yvec_\Delta^n-\yvec_\Delta^{n-1}) = -\tau\grd\nrj_\theh(\yvec_\Delta^n).
 \end{align*}
 Substracting these equations, we obtain
 \begin{align*}
   \Wmat^{1/2}(\xvec_\Delta^n-\yvec_\Delta^n) + \tau\Wmat^{-1/2}\big(\grd\nrj_\theh(\xvec_\Delta^n)-\grd\nrj_\theh(\yvec_\Delta^n)\big)
   =\Wmat^{1/2}(\xvec_\Delta^{n-1}-\yvec_\Delta^{n-1}),
 \end{align*}
 where $\Wmat^{1/2}$ and $\Wmat^{-1/2}$ are the (symmetric and positive definite) square roots of $\Wmat$ and its inverse, respectively;
 see Lemma \ref{lem:wasserstein}.
 Taking the norm on both sides yields
 \begin{align*}
   (\xvec_\Delta^n-\yvec_\Delta^n) \Wmat (\xvec_\Delta^n-\yvec_\Delta^n)
   & + 2\tau (\xvec_\Delta^n-\yvec_\Delta^n)^T \big(\grd\nrj_\theh(\xvec_\Delta^n)-\grd\nrj_\theh(\yvec_\Delta^n)\big) \\
   & \le (\xvec_\Delta^{n-1}-\yvec_\Delta^{n-1})^T \Wmat(\xvec_\Delta^{n-1}-\yvec_\Delta^{n-1}).
 \end{align*}
 Combining this with the convexity property \eqref{eq:convex3}, 
 we arrive at the recursive relation
 \begin{align*}
   (1-2\Lambda\tau) (\xvec_\Delta^n-\yvec_\Delta^n)^T\Wmat (\xvec_\Delta^n-\yvec_\Delta^n)
   \le (\xvec_\Delta^{n-1}-\yvec_\Delta^{n-1})^T \Wmat(\xvec_\Delta^{n-1}-\yvec_\Delta^{n-1}).   
 \end{align*}
 Iteration of this estimate and application of \eqref{eq:anothermiracle} yields \eqref{eq:contract3}.
\end{proof}

\subsection{The maximum and minimum principles}
\label{sct:minmax}
Recall that $\psi''$ is a positive and non-increasing function by Lemma \ref{lem:phipsi},
and recall the definition of $\alpha(\theh)$ from \eqref{eq:deltamin}.
\begin{prp}[Minimum principle]
 \label{prp:minprinc}
 Assume that $\Lambda\tau<1/2$,
 and assume further that $\theh$ and $\tau$ are related by the \emph{inverse CFL condition}
 \begin{align}
   \label{eq:flc}
   \delmax(\theh)^2 \le 6\psi''\big( Z^*_T\big) \tau,
   \quad \text{where} \quad \underline Z^*_T := \frac{6\alpha(\theh)e^{2T\Lambda}}{\min_x u_\Delta^0}.
 \end{align}
 Then, for every $n$ with $n\tau\le T$,
 \begin{align}
   \label{eq:minprinc}
   \min_x u_\Delta^n \ge e^{-2n\tau\Lambda}\min_xu_\Delta^0.
 \end{align}
\end{prp}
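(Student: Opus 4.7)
Since $u_k^n = 1/z_k^n$ by \eqref{eq:zfromx}, the claimed lower bound on $\min_x u_\Delta^n$ is equivalent to the upper bound $Z^n := \max_k z_k^n \le e^{2n\tau\Lambda} Z^0$, where $Z^0 = 1/\min_x u_\Delta^0$.  I prove this by induction on $n$, evaluating the Euler--Lagrange system \eqref{eq:el3} at an index $m^*$ with $z_{m^*}^n = Z^n$; the boundary cases $m^* \in \{1,K\}$ are handled via \eqref{eq:el2a}--\eqref{eq:el2b} in an analogous but simpler way, so I focus on an interior $m^*$.

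At $m^*$ the $\psi'$-second-difference on the right of \eqref{eq:el3} is non-positive, because $\psi'$ is increasing and $z_{m^*}^n$ is maximal.  Using a mean value expansion together with the fact that $\psi''$ is positive and \emph{non-increasing} (Lemma \ref{lem:phipsi}), the sharper estimate
\[
(1-\gamma_{m^*})\psi'(z_{m^*+1}^n) - 2\psi'(z_{m^*}^n) + (1+\gamma_{m^*})\psi'(z_{m^*-1}^n) \le -\psi''(Z^n)\bigl[(1-\gamma_{m^*})D_+ + (1+\gamma_{m^*})D_-\bigr]
\]
holds with $D_\pm := Z^n - z_{m^*\pm 1}^n \ge 0$.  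The potential contribution on the right of \eqref{eq:el3} equals $-\int V_{xx}(\convX_\theh[\xvec^n])\hatF_{m^*}(\convX_\theh[\xvec^n])_\xi\,\mathrm d\xi$ and is, by \eqref{eq:lambda}, bounded above by $\Lambda[\Wnew\gvec^n]_{m^*}$.  Writing $[\Wnew\gvec^n]_{m^*} = R_{m^*}Z^n - \Wnew_{m^*,m^*-1}D_- - \Wnew_{m^*,m^*+1}D_+$, with $R_{m^*}$ the $m^*$-th row sum of $\Wnew$, and bounding $[\Wnew\gvec^{n-1}]_{m^*} \le R_{m^*}Z^{n-1}$ via $z_k^{n-1}\le Z^{n-1}$, one rearranges \eqref{eq:el3} into
\[
(1-\tau\Lambda)R_{m^*}Z^n + D_-\bigl[\tau\psi''(Z^n)(1+\gamma_{m^*}) - \Wnew_{m^*,m^*-1}\bigr] + D_+\bigl[\tau\psi''(Z^n)(1-\gamma_{m^*}) - \Wnew_{m^*,m^*+1}\bigr] \le R_{m^*}Z^{n-1}.
\]
From \eqref{eq:wnew}, $\Wnew_{m^*,m^*\pm 1} = (1\mp\gamma_{m^*})\delta_{m^*\mp 1}^2/6 \le (1\mp\gamma_{m^*})\delmax(\theh)^2/6$, so both bracketed coefficients are non-negative whenever $\tau\psi''(Z^n) \ge \delmax(\theh)^2/6$.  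By the inverse CFL \eqref{eq:flc} and monotonicity of $\psi''$, this happens as soon as $Z^n \le \underline Z^*_T$.  Dropping the $D_\pm$-terms then yields $(1-\tau\Lambda)Z^n \le Z^{n-1}$, and iterating together with the elementary bound $-\log(1-x) \le 2x$ for $x = \tau\Lambda < 1/2$ gives $Z^n \le (1-\tau\Lambda)^{-n}Z^0 \le e^{2n\tau\Lambda}Z^0$, as desired.

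The main obstacle is the bootstrap that justifies $Z^n \le \underline Z^*_T$ at each inductive step: the hypothesis $Z^{n-1} \le e^{2(n-1)\tau\Lambda}Z^0 \le \underline Z^*_T/(6\alpha(\theh))$ (with $\alpha(\theh) \ge 1$) does not by itself preclude $Z^n$ from overshooting $\underline Z^*_T$ in a single step.  I would resolve this by a connectedness argument: by Proposition \ref{prp:existence}, the minimizer $\xvec_\Delta^n$ — and hence $Z^n$ — depends continuously on $\xvec_\Delta^{n-1}$; continuously deforming $\xvec_\Delta^{n-1}$ toward a reference state for which $Z^n$ is easily seen to stay well below $\underline Z^*_T$, and tracking $Z^n$ along the deformation, the first instant at which $Z^n$ might equal $\underline Z^*_T$ is excluded, because at that instant the estimate above applies and forces $Z^n \le Z^{n-1}/(1-\tau\Lambda) < \underline Z^*_T$.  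Hence $Z^n < \underline Z^*_T$ along the entire deformation, in particular at the actual state, which closes the induction.
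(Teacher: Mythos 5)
Your proof follows the same strategy as the paper's — induction, evaluate \eqref{eq:el3} at a maximizing index $m^*$, bound the potential term by $\Lambda[\Wnew\gvec^n]_{m^*}$, use the mean-value/monotonicity of $\psi''$ to control the $\psi'$-second-difference, and invoke the inverse CFL to drop the $D_\pm$-terms. The identity $[\Wnew\gvec^n]_{m^*} = R_{m^*}Z^n - \Wnew_{m^*,m^*-1}D_- - \Wnew_{m^*,m^*+1}D_+$ and the rearrangement are essentially the same as the paper's \eqref{eq:heavy}--\eqref{eq:light} (though you drop the factor $(1-\Lambda\tau)$ from $\Wnew_{m^*,m^*\pm1}$, which tightens the requirement needlessly but does not invalidate anything).

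The genuine gap is the bootstrap. You correctly see that the step needs $Z^n \le \underline Z^*_T$ \emph{before} the CFL-based cancellation can be applied, but you then claim the inductive hypothesis ``does not by itself preclude $Z^n$ from overshooting $\underline Z^*_T$ in a single step'' and resort to a continuity/connectedness argument. That claim is wrong, and the connectedness argument is both unnecessary and, as stated, incomplete (you would have to control $Z^{n-1}$ along the deformation path as well, and identify a reference state — none of which is spelled out). The clean resolution is already implicit in your own inequality: \emph{before} invoking the CFL condition, simply note that the $\psi'$-second-difference at the maximizer is non-positive (since $\psi'$ is increasing), and that $[\Wnew\gvec^n]_{m^*}\ge\Wnew_{m^*,m^*}Z^n$ because all entries of $\Wnew$ and of $\gvec^n$ are non-negative. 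This yields
\begin{align*}
 (1-\Lambda\tau)\,\Wnew_{m^*,m^*}\,Z^n \le [\Wnew\gvec^{n-1}]_{m^*} \le R_{m^*}\,Z^{n-1},
\end{align*}
and a direct comparison of the entries in \eqref{eq:wnew} gives $R_{m^*}\le 3\alpha(\theh)\Wnew_{m^*,m^*}$. Combined with $\Lambda\tau<1/2$, this gives the rough a priori bound $Z^n\le 6\alpha(\theh)\,Z^{n-1}$, and the inductive hypothesis $Z^{n-1}\le(1-\Lambda\tau)^{-(n-1)}\max_k z^0_k$ then yields $Z^n\le \underline Z^*_T$ directly — this is exactly why the constant $6\alpha(\theh)$ appears in the definition of $\underline Z^*_T$. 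With $Z^n\le \underline Z^*_T$ secured, the CFL-based refinement closes the induction without any topological argument.
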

\begin{proof}
 The minimum principle \eqref{eq:minprinc} for $u_\Delta$ is equivalent to a maximum principle for $\gvec_\Delta$.
 By induction on $n$, we prove
 \begin{align}
   \label{eq:maxprinc}
   Z^{(n)}:=\max_k z^n_k \le (1-\Lambda\tau)^{-n}\max_k z^0_k,
 \end{align}
 which is a slightly sharper estimate than \eqref{eq:minprinc} 
 since $(1-\tau\Lambda)>e^{-2\tau\Lambda}$ under our assumption $\tau\Lambda<1/2$.
 So fix $n$ with $n\tau\le T$ and assume that $z^{n-1}_k\le Z^{(n-1)}$ for all $k$.
 Suppose that $Z^{(n)} = z^n_m$ for some $m\in\{2,\ldots,K-1\}$.
 We  estimate the integral term in the Euler-Lagrange equation \eqref{eq:el3},
 using the positivity of the $z^n_k$ and that $\hatF_m\ge0$:
 \begin{align*}
   [\Wnew\gvec^n]_m
   &\le [\Wnew\gvec^{n-1}]_m
   + \tau\big((1-\gamma_m)\psi'(z^n_{m+1}) - 2\psi'(z^n_m) + (1+\gamma_m)\psi'(z^n_{m-1})\big) \\
   &\qquad + \Lambda\tau\sum_{k=m-1}^{m+1}z^n_k\int_{\xi_{k-1}}^{\xi_k}\hatF_m\dd\xi.
 \end{align*}
 Recalling \eqref{eq:3}, we conclude further that 
 \begin{align}
   \label{eq:heavy}
   \begin{split}
   (1-\Lambda\tau)[\Wnew\gvec^n]_m
   &\le [\Wnew\gvec^{n-1}]_m \\
   & + (1-\gamma_m) \tau\big(\psi'(z^n_{m+1}) -\psi'(z^n_m)\big) 
   + (1+\gamma_m)\tau\big(\psi'(z^n_{m-1}) -\psi'(z^n_m)\big).
   \end{split}
 \end{align}
 Since $Z^{(n)}=z^n_m\ge z^n_k$ for all $k$, 
 and since $\psi''>0$ is non-increasing, it follows that
 \begin{align}
   \label{eq:light}
   \begin{split}
     &\psi'(z^n_{m+1}) -\psi'(z^n_m) \le -\psi''(Z^{(n)})(z^n_m-z^n_{m+1}), \\
     &\psi'(z^n_{m-1}) -\psi'(z^n_m) \le -\psi''(Z^{(n)})(z^n_m-z^n_{m-1}).      
   \end{split}
 \end{align}
 In particular, these terms are non-positive, and so \eqref{eq:heavy} implies
 \begin{align*}
   (1-\Lambda\tau)\Wnew_{m,m}Z^{(n)}\le\sigma_m Z^{(n-1)},
   \quad\text{with}\quad
   \sigma_m = \Wnew_{m,m-1}+\Wnew_{m,m}+\Wnew_{m,m+1}.
 \end{align*}
 From the explicit form of $\Wnew$ in \eqref{eq:wnew}, we obtain $\sigma_m\le3\alpha(\theh)\Wnew_{m,m}$, 
 and by means of the induction hypotheses, 
 we conclude the rough bound
 \begin{align}
   \label{eq:rough}
   Z^{(n)} \le 6\alpha(\theh)Z^{n-1} \le 6\alpha(\theh) (1-\Lambda\tau)^{-(n-1)}\max_k z^0_k \le Z^*_T.
 \end{align}
 We return to \eqref{eq:heavy}, insert \eqref{eq:light}, use that $\psi''$ is non-increasing,
 and find after some manipulations:
 \begin{align*}
   (1-\Lambda\tau)\sigma_m Z^{(n)} \le \sigma_m Z^{(n-1)} 
   &+ (1-\gamma_m) \Big(\frac{1-\Lambda\tau}6\delta_{m+1}^2-\tau\psi''(Z^*_T)\Big)(z^n_m-z^n_{m-1}) \\
   &+ (1+\gamma_m) \Big(\frac{1-\Lambda\tau}6\delta_{m-1}^2-\tau\psi''(Z^*_T)\Big)(z^n_m-z^n_{m+1}).
 \end{align*}
 The inverse CFL condition \eqref{eq:flc} implies non-positivity of the last two terms,
 so \eqref{eq:rough} refines to
 \begin{align*}
   Z^{(n)} \le (1-\Lambda\tau)^{-1}Z^{(n-1)} \le  (1-\Lambda\tau)^{-n}\max_k z^0_k.
 \end{align*}
 If the maximum is attained at one of the boundary points, $m=1$ or $m=K$, 
 then a similar calculation can be carried out using \eqref{eq:el2a} or \eqref{eq:el2b}, respectively, instead of \eqref{eq:el3}.
\end{proof}
Similarly to the minimum princple, one obtains the following maximum principle.
Notice that the inverse CFL condition is the same as before, i.e., it involves $\min_x u_\Delta^0$ and not $\max_x u_\Delta^0$.
\begin{prp}[Maximum principle]
 Let all the hypotheses of Proposition \ref{prp:minprinc} hold,
 and assume --- in addition to \eqref{eq:flc} --- that
 \begin{align*}
   (1+\lambda\tau)\delmax(\theh)^2\le6\tau\psi''(Z^*_T), \quad \text{where} \quad \lambda:=\max_I V_{xx}\ge0.
 \end{align*}
 Then, for every $n$ with $n\tau\le T$,
 \begin{align*}
   \max_x u_\Delta^n \le e^{2n\tau\lambda}\max_x u_\Delta^0.
 \end{align*}
\end{prp}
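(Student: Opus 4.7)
The plan is to mirror the proof of the minimum principle (Proposition \ref{prp:minprinc}) by establishing a \emph{minimum} principle for the reciprocals $z^n_k=1/u^n_k$. Writing $W^{(n)}:=\min_k z^n_k$, the bound $\max_x u_\Delta^n\le e^{2n\tau\lambda}\max_x u_\Delta^0$ is equivalent to $W^{(n)}\ge e^{-2n\tau\lambda}W^{(0)}$, and I would in fact prove the slightly sharper $W^{(n)}\ge(1+\lambda\tau)^{-n}W^{(0)}$ by induction on $n$; the elementary estimate $(1+\lambda\tau)^n\le e^{n\tau\lambda}\le e^{2n\tau\lambda}$ then yields the claim.

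For the inductive step, assume $z^{n-1}_k\ge W^{(n-1)}$ for every $k$, and suppose first that $W^{(n)}=z^n_m$ is attained at some interior index $m\in\{2,\ldots,K-1\}$. In the Euler--Lagrange system \eqref{eq:el3}, the bound $V_{xx}\le\lambda$ together with $z^n_k>0$ and $\hatF_m\ge0$ gives $-\sum_{k=m-1}^{m+1}z^n_k\int_{\xi_{k-1}}^{\xi_k}V_{xx}(\convX_\theh[\xvec_\Delta^n])\hatF_m\dd\xi\ge-\lambda[\Wnew\gvec_\Delta^n]_m$, so rearranging \eqref{eq:el3} produces the counterpart of \eqref{eq:heavy},
\begin{align*}
 (1+\lambda\tau)[\Wnew\gvec_\Delta^n]_m
 &\ge [\Wnew\gvec_\Delta^{n-1}]_m + (1-\gamma_m)\tau\bigl(\psi'(z^n_{m+1})-\psi'(z^n_m)\bigr) \\
 &\quad + (1+\gamma_m)\tau\bigl(\psi'(z^n_{m-1})-\psi'(z^n_m)\bigr).
\end{align*}
Since $z^n_m$ is now the \emph{minimum}, both $\psi'$-differences are non-negative; moreover, because $\psi''$ is non-increasing while the hypotheses of Proposition \ref{prp:minprinc} already provide the bound $z^n_k\le Z^*_T$, they also satisfy $\psi'(z^n_{m\pm 1})-\psi'(z^n_m)\ge\psi''(Z^*_T)(z^n_{m\pm 1}-z^n_m)$, which is the sign-reversed analogue of \eqref{eq:light}.

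Setting $D_\pm:=z^n_{m\pm 1}-z^n_m\ge 0$, one can then expand $[\Wnew\gvec_\Delta^n]_m=\sigma_m W^{(n)}+\Wnew_{m,m-1}D_-+\Wnew_{m,m+1}D_+$ with $\sigma_m:=\Wnew_{m,m-1}+\Wnew_{m,m}+\Wnew_{m,m+1}$, bound $[\Wnew\gvec_\Delta^{n-1}]_m\ge\sigma_m W^{(n-1)}$ by the inductive hypothesis and the non-negativity of the row of $\Wnew$, and use the identities $\Wnew_{m,m-1}=\tfrac{1+\gamma_m}{6}\delta_{m-1}^2$ and $\Wnew_{m,m+1}=\tfrac{1-\gamma_m}{6}\delta_{m+1}^2$ from \eqref{eq:wnew} to collect $D_\pm$-contributions. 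The resulting inequality reads
\begin{align*}
 (1+\lambda\tau)\sigma_m W^{(n)}
 &\ge \sigma_m W^{(n-1)}
 + \tfrac{1-\gamma_m}{6}\,D_+\bigl(6\tau\psi''(Z^*_T)-(1+\lambda\tau)\delta_{m+1}^2\bigr) \\
 &\quad + \tfrac{1+\gamma_m}{6}\,D_-\bigl(6\tau\psi''(Z^*_T)-(1+\lambda\tau)\delta_{m-1}^2\bigr),
\end{align*}
and the strengthened CFL condition $(1+\lambda\tau)\delmax(\theh)^2\le 6\tau\psi''(Z^*_T)$ is precisely what forces both brackets to be non-negative; the $D_\pm$-terms may thus be dropped, leaving $(1+\lambda\tau)W^{(n)}\ge W^{(n-1)}$. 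Iterating closes the induction, and the boundary cases $m\in\{1,K\}$ reduce to a single neighbour and a single $D$-contribution via \eqref{eq:el2a}--\eqref{eq:el2b}, to which the very same sign analysis applies.

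The main obstacle, as in Proposition \ref{prp:minprinc}, is the sign bookkeeping: several inequalities from the minimum-principle proof must be reversed coherently, and the damping factor $(1-\Lambda\tau)$ which there absorbed the size of the off-diagonal $\Wnew$-entries is replaced here by the amplification factor $(1+\lambda\tau)$. This is precisely why the CFL condition must be strengthened from \eqref{eq:flc} to the version stated in the proposition; no genuinely new analytic input beyond recycling the bound $z^n_k\le Z^*_T$ already furnished by Proposition \ref{prp:minprinc} under its own hypotheses is required.
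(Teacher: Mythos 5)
Your proof is correct and is precisely the mirror of the minimum-principle argument that the paper has in mind when it omits this proof. The one place where the mirroring is not entirely mechanical --- the upper bound $z^n_{m\pm1}\le Z^*_T$ needed to lower-bound $\psi''$, which in the proof of Proposition~\ref{prp:minprinc} came from an internal ``rough bound'' on $\max_k z^n_k$ and cannot be reproduced by a rough bound on $\min_k z^n_k$ --- you handle cleanly by invoking the conclusion of Proposition~\ref{prp:minprinc} directly; the sign bookkeeping, the use of $V_{xx}\le\lambda$ and of the off-diagonal entries of $\Wnew$, and the role of the strengthened CFL condition are all carried out correctly.
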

We omit the proof, which is very similar to the one given above.

\subsection{Regularity}
Introduce the function $\Phi:\setR_+\to\setR$ by
\begin{align}
 \label{eq:bigpsi}
 \Phi(r) = \int_0^r \sqrt{\rho}\phi''(\rho)\dd\rho.
\end{align}
$\Phi$ is strictly increasing, with
\begin{align}
 \label{eq:Phitopsi}
 \Phi'(r) = \sqrt{r}\phi''(r) = r^{-5/2}\psi''(r^{-1})
\end{align}
by definition of $\psi$ in \eqref{eq:psi}.
Further, $\Phi^2$ grows asymptotically faster than $\prss$,
\begin{align}
 \label{eq:superlinear}
 \lim_{r\to\infty}\frac {\Phi(r)^2}{\prss(r)} = + \infty,
\end{align}
which is easily verified by l'Hospitals rule:
\begin{align*}
 \lim_{r\to\infty}\frac {\Phi(r)^2}{\prss(r)} 
 = 2\lim_{r\to\infty}\frac{\Phi(r)\Phi'(r)}{\prss'(r)} 
 = 2\lim_{r\to\infty}\frac{\Phi(r)}{\sqrt{r}}
 = 4\lim_{r\to\infty}\big(\sqrt{r}\Phi'(r)\big)
 = 4\lim_{r\to\infty}\prss'(r)
 = +\infty,  
\end{align*}
due to \eqref{eq:pressume}.
In particular, $\Phi^2$ is superlinear at infinity. 
\begin{prp}
 For every $N\in\setN$,
 \begin{align}
   \label{eq:regularity2}
   \tau\sum_{n=1}^N \var{\Phi(u^n)^2} &\le \thec\big(\anrj_\theh(u_\Delta^0),\alpha(\theh),N\tau\big),
 \end{align}
 where
 \begin{align}
   \label{eq:thec}
   \thec(E,\alpha,T) = T\Phi\Big(\frac{M}{b-a}\Big)^2 + 6(b-a)\alpha(1+\alpha)\big[E-\underline\anrj+MT\sup_{x\in I}(V_x(x)^2)\big].
 \end{align}
\end{prp}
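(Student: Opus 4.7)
The plan is to extract the inequality by combining a mean-value ``floor'' bound on $\sup_k\Phi(u^n_k)^2$ with the dissipation estimate \eqref{eq:ei}, via a Cauchy-Schwarz/Young bootstrap. Since $u^n\in\densN$ has mass $M$ on $I=[a,b]$, at least one of its piecewise-constant values satisfies $u^n_{k_0}\le M/(b-a)$. Because $\Phi$ is non-decreasing with $\Phi(0)=0$, one has $\Phi(u^n_{k_0})^2\le\Phi(M/(b-a))^2$, and for every other index $k$ the triangle inequality yields $\Phi(u^n_k)^2\le\Phi(M/(b-a))^2+\var{\Phi(u^n)^2}$, so that $\sup_k\Phi(u^n_k)^2$ is controlled by the claimed floor plus the very quantity being estimated.

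Next, using $\Phi'(\rho)=\sqrt{\rho}\,\phi''(\rho)$ and $\rho\phi''(\rho)=\prss'(\rho)$ from \eqref{eq:prsstophi}, I write
\[
\Phi(u^n_{k+1})^2-\Phi(u^n_k)^2 = 2\int_{u^n_k}^{u^n_{k+1}}\Phi(\rho)\sqrt{\rho}\,\phi''(\rho)\,d\rho
\]
and apply Cauchy-Schwarz to the integrand split as $(2\Phi\sqrt{\phi''})\cdot\sqrt{\rho\phi''}$. Since $\int\rho\phi''\,d\rho = \prss(u^n_{k+1})-\prss(u^n_k) = -[\psi'(z^n_{k+1})-\psi'(z^n_k)]$, this yields the key pointwise bound
\[
\big|\Phi(u^n_{k+1})^2-\Phi(u^n_k)^2\big|^2 \le 4\,\big|\psi'(z^n_{k+1})-\psi'(z^n_k)\big|\int_{u^n_k}^{u^n_{k+1}}\Phi(\rho)^2\phi''(\rho)\,d\rho,
\]
and the remaining integral is at most $\sup_j\Phi(u^n_j)^2\cdot|\phi'(u^n_{k+1})-\phi'(u^n_k)|$. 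Taking square roots, summing in $k$, and applying a further Cauchy-Schwarz with weights $(\delta_k+\delta_{k+1})$ (using $\sum_k\delta_k=M$ and the ratio bound $\delmax(\theh)\le\alpha(\theh)\,\delta_j$ to pass between $\delta$-weighted and $x$-weighted sums) should produce an estimate relating $\var{\Phi(u^n)^2}$ to $\sup_j\Phi(u^n_j)^2$ times the dissipation $\sum_k(\psi'(z^n_{k+1})-\psi'(z^n_k))^2/(\delta_k+\delta_{k+1})$ and a geometric factor involving $(b-a)$ and $\alpha(\theh)$.

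Inserting the sup-bound from the first paragraph and applying Young's inequality with a small parameter to absorb the $\var{\Phi(u^n)^2}$ contribution on the right back into the left-hand side should then deliver a per-step estimate of the form
\[
\var{\Phi(u^n)^2} \le \Phi\Big(\frac{M}{b-a}\Big)^2 + 6(b-a)\,\big(1+\alpha(\theh)\big)\sum_k\frac{\big(\psi'(z^n_{k+1})-\psi'(z^n_k)\big)^2}{\delta_k+\delta_{k+1}}.
\]
Multiplying by $\tau$, summing $n=1,\ldots,N$, and invoking \eqref{eq:ei} on the dissipation sum (which contributes an extra factor $\alpha(\theh)$, bounded by $[\anrj_\theh(u_\Delta^0)-\nrjbelow+MT\sup V_x^2]$ after using \eqref{eq:below}) matches exactly the definition \eqref{eq:thec} of $\thec$.

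The main obstacle is the bookkeeping in the Cauchy-Schwarz/Young step: the factor $|\phi'(u^n_{k+1})-\phi'(u^n_k)|$ that appears naturally would, under a naive bound, bring in $1/\min u$ through the mean-value identity $|\phi'\text{diff}|/|\psi'\text{diff}|\le 1/\min(u_k,u_{k+1})$ and hence force reliance on a minimum principle (which is precisely what the theorem wishes to avoid here). Arranging the weights so that this dependence is instead absorbed by the sup-bootstrap is the delicate point, and the explicit constants $(1+\alpha)$ and $(b-a)$ emerge from the conversion between $\delta_k$-weighted and $(x_k-x_{k-1})$-weighted summations.
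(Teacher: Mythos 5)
Your overall strategy is sound and lands on the same key ingredients as the paper: an integral identity relating $\Phi$-differences to $\psi'$-differences (ultimately equivalent, since after your Cauchy--Schwarz split one still needs $|\phi'(u_{k+1})-\phi'(u_k)|\le\max(z_k,z_{k+1})\,|\psi'(z_{k+1})-\psi'(z_k)|$, which brings back the same $\max(z_k,z_{k+1})$ factor the paper extracts directly by a sup-bound on $\sqrt s$ in its integral); a discrete Cauchy--Schwarz with the weights $\delta_k+\delta_{k+1}$ converted to $x_{k+1}-x_{k-1}$ (this is exactly where the $(1+\alpha(\theh))$ and $(b-a)$ come from, so your concern about the $|\phi'|$-factor forcing a minimum principle is correctly resolved by this conversion, not by the sup-bootstrap as you phrase it); the mean-value ``floor'' index $m^*$ with $u_{m^*}\le M/(b-a)$; and closing via the dissipation estimate \eqref{eq:ei}. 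So up to the point $\var{\Phi(u^n)^2}\le 2\,(\max_k\Phi(u^n_k))\sqrt{2(b-a)S^n}$ with $S^n=(1+\alpha(\theh))\sum_k(\psi'(z^n_{k+1})-\psi'(z^n_k))^2/(\delta_k+\delta_{k+1})$, you are in exactly the same place as the paper.

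Where you genuinely diverge is in the bootstrap for the sup. The paper estimates $\max_k\Phi(u^n_k)$ \emph{linearly} in $\sqrt{S^n}$ by chaining $|\Phi(u^n_\ell)-\Phi(u^n_{m^*})|\le\sum_m|\Phi(u^n_{m+1})-\Phi(u^n_m)|\le\sqrt{2(b-a)S^n}$ (its \eqref{eq:key}) before ever squaring anything; substituting this into $\var{\Phi(u^n)^2}\le2\max_k\Phi(u^n_k)\sqrt{2(b-a)S^n}$ gives the clean $\Phi(M/(b-a))^2+6(b-a)S^n$ with no absorption. You instead bound $\max_k\Phi(u^n_k)^2\le\Phi(M/(b-a))^2+\var{\Phi(u^n)^2}$, which feeds the unknown $\var{\Phi(u^n)^2}$ back into its own upper bound and forces a quadratic absorption. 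That closes, but necessarily with a worse constant: writing $V=\var{\Phi(u^n)^2}$, $F=\Phi(M/(b-a))^2$, $P=8(b-a)S^n$, your inequality reads $V^2\le P(F+V)$, whose solution gives at best $V\le F+P=F+8(b-a)S^n$, and generic Young-absorption gives even more. Your claimed per-step constant $6$ does not come out of your route; you over-claim there. The resulting bound is still of the stated form with a different numerical constant in $\thec$, which is harmless for the compactness argument that the proposition feeds into, but it does not reproduce the explicit $\thec$ in \eqref{eq:thec}. The fix, if you want the stated constant, is to estimate $\sum_m|\Phi(u^n_{m+1})-\Phi(u^n_m)|$ directly (not $\sum_m|\Phi(u^n_{m+1})^2-\Phi(u^n_m)^2|$) before squaring, so that the sup-bootstrap is linear rather than quadratic.
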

Estimate \eqref{eq:regularity2} mimicks the standard energy dissipation estimate for \eqref{eq:adde}, 
which is
\begin{align*}
 - \frac{\dd}{\dd t}\anrj(u) = \int_I u\big[\phi'(u)+V\big]_x^2\dd x \ge \int_I \Phi(u)_x^2\dd x - C.
\end{align*}
Roughly speaking, the integral of the spatial derivative is replaced by the total variation,
\begin{align*}
 \var{\Phi(u_\Delta^n)^2} = \sum_{k=1}^{K-1} \big|\Phi(u^n_{k+1})^2-\Phi(u^n_k)^2\big|,
\end{align*}
which is about the maximal regularity that can be associated to a piecewise constant function.
\begin{proof}
 Recall that $u^n_k=1/z^n_k=\delta_k/(x^n_k-x^n_{k-1})$.
 On one hand, using \eqref{eq:Phitopsi},
 \begin{align*}
   \big(\Phi(u^n_{k+1})-\Phi(u^n_k)\big)^2
   &= \bigg(\int_{u^n_k}^{u^n_{k+1}}\Phi'(r)\dd r\bigg)^2 \\
   &= \bigg(\int_{u^n_k}^{u^n_{k+1}}r^{-5/2}\psi''(r^{-1})\dd r\bigg)^2 
    = \bigg(\int_{1/u^n_{k+1}}^{1/u^n_k}\sqrt{s}\psi''(s)\dd s\bigg)^2 \\
    &\le \max(z^n_k,z^n_{k+1})\bigg(\int_{z^n_{k+1}}^{z^n_k}\psi''(s)\dd s\bigg)^2 
    = \max(z^n_k,z^n_{k+1})\big(\psi'(z^n_k)-\psi'(z^n_{k+1})\big)^2.
 \end{align*}
 On the other hand, recalling the definition of $\alpha(\theh)$ in \eqref{eq:deltamin},
 \begin{align*}
   (\delta_k+\delta_{k+1})(z^n_k+z^n_{k+1})\le \big(1+\alpha(\theh)\big)(z^n_k\delta_k+z^n_{k+1}\delta_{k+1}).
 \end{align*}
 We combine these estimates and sum over $k=1,\ldots,K-1$ to obtain
 \begin{align*}
   \sum_{k=1}^{K-1}\frac{\big(\Phi(u^n_{k+1})-\Phi(u^n_k)\big)^2}{z^n_k\delta_k+z^n_{k+1}\delta_{k+1}}
   \le S^n := \big(1+\alpha(\theh)\big)\sum_{k=1}^{K-1} \frac{\big(\psi'(z^n_k)-\psi'(z^n_{k+1})\big)^2}{\delta_{k+1}+\delta_k}.
 \end{align*}
 Since  $x^n_{k+1}-x^n_{k-1}=z^n_k\delta_k+z^n_{k+1}\delta_{k+1}$,
 it follows further that
 for arbitrary $k,\ell\in\{1,2,\ldots,K\}$ with $k\le\ell$:
 \begin{align}
   \label{eq:key}
   \begin{split}
     |\Phi(u^n_\ell)-\Phi(u^n_k)| 
     &\le \sum_{m=k}^{\ell-1} |\Phi(u^n_{m+1})-\Phi(u^n_m)| \\
     &\le \bigg(\sum_{m=k}^{\ell-1}(x^n_{m+1}-x^n_{m-1})\bigg)^{1/2}
     \bigg(\sum_{m=k}^{\ell-1} \frac{\big(\Phi(u^n_{m+1})-\Phi(u^n_m)\big)^2}{z^n_m\delta_m+z^n_{m+1}\delta_{m+1}}\bigg)^{1/2} \\
     &\le \big(2(b-a)S^n\big)^{1/2}. 
   \end{split}
 \end{align}
 Since $u$ has average value $M/(b-a)$, there exists an index $m^*$ with $u_{m^*}\le M/(b-a)$,
 which implies the absolute bound
 \begin{align*}
   \max_{1\le k\le K}\Phi(u_k) \le \Phi(u_{m^*}) + |\Phi(u_k)-\Phi(u_{m^*})| \le  \Phi\Big(\frac{M}{b-a}\Big) + \big(2(b-a)S^n\big)^{1/2}.
 \end{align*}
 So, finally,
 \begin{align*}
   \var{\Phi(u_\Delta^n)^2} 
   = \sum_{k=1}^{K-1}\big|\Phi(u^n_{k+1})^2-\Phi(u^n_k)^2\big|
   & \le 2\max_{1\le k\le K}\Phi(u^n_k) \sum_{k=1}^{K-1}|\Phi(u^n_{k+1})-\Phi(u^n_k)| \\
   &\le 2 \Big[\Phi\Big(\frac{M}{b-a}\Big) + \big(2(b-a)S^n\big)^{1/2}\Big]\big(2(b-a)S^n\big)^{1/2} \\
   &\le \Phi\Big(\frac{M}{b-a}\Big)^2 + 6(b-a)S.
 \end{align*}
 To obtain \eqref{eq:regularity2}, sum with respect to $n=1,\ldots,N$
 and use the energy estimate \eqref{eq:ei}.
\end{proof}

\section{Convergence}
\label{sct:convergence}
In this section, we prove Theorem \ref{thm:main}.
Let a time horizont $T>0$ and an initial condition $u^0\in L^1(I)$ with $\anrj(u^0)<\infty$ be given.
We consider a family $\Delta_j=(\tau_j,\theh_j)$ of time-space discretizations, with $j\in\setN$. 
Accordingly, we denote by $K_j$ the number of nodes of $\theh_j$, and $N_j$ is the smallest integer with $\tau_jN_j\ge T$.

Throughout this section, we assume all the hypotheses of Theorem \ref{thm:main}:
\begin{itemize}
\item $\tau_j\downarrow0$ and $\delmax(\theh_j)\downarrow0$ as $j\to\infty$;
\item initial conditions $u_{\Delta_j}^0\in\densNj$ are given for each $j$, 
 such that $u_{\Delta_j}^0$ converges to $u^0$ weakly in $L^1(I)$.
\item uniformly in $j\in\setN$,
 \begin{align}
   \label{eq:uniform}
   \alpha(\theh_j)\le\overline\alpha< \infty,
   \quad
   \anrj(u_{\Delta_j}^0) \le \overline\anrj < \infty,
   \quad
   \delmax(\theh_j)^2 \le 6\psi''\bigg(\frac{6\overline\alpha e^{2\Lambda T}}{\min_x u_{\Delta_j}^0}\bigg)\tau_j.
 \end{align}
\end{itemize}
Denote by $(u_{\Delta_j}^n)_{n=0}^\infty$ the corresponding discrete solutions obtained as in Proposition \ref{prp:existence},
and introduce the time-interpolated functions $\bar u_{\Delta_j}:[0,T]\to\densNj$ 
by
\begin{align}
 \label{eq:tinterpolate}
 \bar u_{\Delta_j}(t;x) = u_{\Delta_j}^n(x) \quad \text{for all $t\in\big((n-1)\tau_j,n\tau_j\big]\cap[0,T]$}.
\end{align}
The following preliminary result plays an important role in the convergence proof.
\begin{lem}
 With the maximal mesh width  $\delmax(\xvec)$ defined in \eqref{eq:delmaxx},
 we have
 \begin{align}
   \label{eq:xuniform}
   \max_{n\le N_j}\delmax(\xvec_{\Delta_j}^n) \to 0 \quad \text{as $j\to\infty$}.
 \end{align}
\end{lem}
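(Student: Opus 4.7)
The plan is to combine the elementary identity $x_k^n-x_{k-1}^n=\delta_k z_k^n$ with the minimum principle of Proposition \ref{prp:minprinc} (which reduces the problem to an estimate on $\delmax(\theh_j)/\min_xu_{\Delta_j}^0$) and then use the inverse CFL condition \eqref{eq:uniform} together with the explicit asymptotics of $\psi''$ to conclude.

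First, from \eqref{eq:zfromx} one has for every $n\in\setN$ and every $k\in\{1,\ldots,K_j\}$
\begin{align*}
 x_k^n-x_{k-1}^n = \delta_k z_k^n = \frac{\delta_k}{u_k^n} \le \frac{\delmax(\theh_j)}{\min_\ell u_\ell^n}.
\end{align*}
Since $\tau_j\downarrow0$ and $\delmax(\theh_j)\downarrow0$, the hypotheses of Proposition \ref{prp:minprinc} are satisfied for all sufficiently large $j$ by virtue of \eqref{eq:uniform}, and accordingly
\begin{align*}
 \min_\ell u_\ell^n \ge e^{-2\Lambda T}\min_x u_{\Delta_j}^0 \quad \text{for every $n\le N_j$}.
\end{align*}
Writing $\mu_j:=\min_x u_{\Delta_j}^0$, we therefore obtain the uniform-in-$n$ bound
\begin{align*}
 \max_{n\le N_j}\delmax(\xvec_{\Delta_j}^n) \le e^{2\Lambda T}\,\frac{\delmax(\theh_j)}{\mu_j}.
\end{align*}

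It remains to show that the right-hand side tends to zero as $j\to\infty$; this is the main point. Setting $c:=6\overline\alpha e^{2\Lambda T}$, the inverse CFL condition in \eqref{eq:uniform} yields
\begin{align*}
 \frac{\delmax(\theh_j)^2}{\mu_j^2}\le\frac{6\tau_j\psi''(c/\mu_j)}{\mu_j^2}.
\end{align*}
Now invoke the explicit formula $\psi''(s)=s^{-2}\prss'(s^{-1})$ established in the proof of Lemma \ref{lem:phipsi}; applied at $s=c/\mu_j$ it gives $\psi''(c/\mu_j)/\mu_j^2 = \prss'(\mu_j/c)/c^2$. The key input is the hypothesis $\lim_{r\downarrow0}\prss'(r)<\infty$ from \eqref{eq:pressume}, which implies that $\prss'$ is bounded on any compact subset of $[0,\infty)$. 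Since $\mu_j\le M/(b-a)$ by mass conservation (the minimum is no larger than the average), the argument $\mu_j/c$ stays in a fixed bounded interval, so $\prss'(\mu_j/c)\le C_0$ uniformly in $j$ for some constant $C_0$ depending only on $M$, $b-a$, $\overline\alpha$, $\Lambda$ and $T$. Consequently,
\begin{align*}
 \frac{\delmax(\theh_j)^2}{\mu_j^2}\le\frac{6C_0}{c^2}\,\tau_j \longrightarrow 0 \quad \text{as $j\to\infty$},
\end{align*}
and plugging this into the preceding estimate proves \eqref{eq:xuniform}.

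The only subtle point is the treatment of the possibly degenerate regime $\mu_j\downarrow0$: a purely pointwise use of $\delmax(\theh_j)\to0$ would not suffice, since the minimum-principle bound on $u_k^n$ degenerates accordingly. The redeeming feature, built into the inverse CFL condition \eqref{eq:uniform}, is that $\psi''(c/\mu_j)$ decays like $\mu_j^2$ as $\mu_j\downarrow0$, which exactly compensates for the factor $1/\mu_j^2$ and leaves the residual small factor $\tau_j\to0$.
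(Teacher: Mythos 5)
Your proof is correct and follows essentially the same route as the paper's: rewrite $\delmax(\xvec^n)$ via $(x_k-x_{k-1})u_k=\delta_k$, bound $\min u^n$ from below using the minimum principle, and then use the inverse CFL condition together with the identity $\psi''(s)=s^{-2}\prss'(s^{-1})$ and the boundedness of $\prss'$ near $0$ (from \eqref{eq:pressume}) to show that $\delmax(\theh_j)/\min_x u_{\Delta_j}^0$ is $\bigo(\sqrt{\tau_j})$. Your version is slightly more explicit about the mechanism (keeping the full argument $c/\mu_j$ rather than silently dropping the $6\bar\alpha$ factor via monotonicity of $\psi''$, and pointing out $\mu_j\le M/(b-a)$ to justify boundedness of $\prss'$), but the substance is identical to the paper's proof.
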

\begin{proof}
 This is a consequence of the minimum principle and hypothesis \eqref{eq:uniform}.
 It follows from the assumption $\lim_{r\downarrow0}\prss'(r)<\infty$ in \eqref{eq:pressume} that
 \begin{align*}
   \psi''\bigg(\frac{e^{2\Lambda T}}{\min_x u_{\Delta_j}^0}\bigg) 
   \le C \bigg(\frac{\min_x u_{\Delta_j}^0}{e^{2\Lambda T}}\bigg)^2
 \end{align*}
 for some appropriate constant $C$.
 Recalling that $(x_k-x_{k-1})u_k=\delta_k$,
 Proposition \ref{prp:minprinc} thus implies --- uniformly in $j$ and $n\le N_j$ --- that
 \begin{align*}
   \delmax(\xvec_{\Delta_j}^n)^2 
   \le \bigg(\frac{\delmax(\theh_j)}{\min_x u_{\Delta_j}^n} \bigg)^2
   \le \delmax(\theh_j)^2\bigg(\frac{e^{2\Lambda T}}{\min_x u_{\Delta_j}^0}\bigg)^2
   \le \frac{C \delmax(\theh_j)^2}{\psi''\big(\frac{e^{2\Lambda T}}{\min_x u_{\Delta_j}^0}\big)}
   \le 6C\tau_j,
 \end{align*}
 using hypothesis \eqref{eq:uniform}.
 Since $\tau_j\downarrow0$ as $j\to\infty$, this proves the claim.
\end{proof}

\subsection{Compactness in $\wass$}
The following weak convergence result is a well-known consequence of the energy estimate \eqref{eq:we} in combination with the Arzel\`{a}-Ascoli theorem.
\begin{prp}
 \label{prp:aa}
 More precisely, every subsequence of $(\bar u_{\Delta_j})_{j\in\setN}$ contains a sub-subsequence
 that converges uniformly w.r.t.\ $t\in[0,T]$ in $\wass$ to a limit curve $u_*\in C^{1/2}([0,T];\wass)$.
\end{prp}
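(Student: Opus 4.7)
The argument I would present is a standard Arzelà–Ascoli-type compactness result adapted to the piecewise-constant interpolation. The two ingredients I need are (i) approximate equicontinuity of the curves $\bar u_{\Delta_j}$ in time with respect to $\wass$, uniformly in $j$, and (ii) relative compactness of the set $\{\bar u_{\Delta_j}(t) : j\in\setN,\ t\in[0,T]\}$ in $(\dens,\wass)$.

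For equicontinuity, I would start from the energy estimate \eqref{eq:we}, which, combined with the hypotheses $\anrj(u_{\Delta_j}^0)\le\overline\anrj$ and the lower bound $\underline\anrj$ of $\anrj$, yields $\sum_{n=1}^{N_j}\wass(u_{\Delta_j}^n,u_{\Delta_j}^{n-1})^2\le 2\tau_j(\overline\anrj-\underline\anrj)$. For $0\le s<t\le T$, with $s\in((m-1)\tau_j,m\tau_j]$ and $t\in((n-1)\tau_j,n\tau_j]$, the triangle inequality together with Cauchy–Schwarz gives
\begin{align*}
\wass(\bar u_{\Delta_j}(t),\bar u_{\Delta_j}(s))
\le \sum_{k=m+1}^{n}\wass(u_{\Delta_j}^k,u_{\Delta_j}^{k-1})
\le \sqrt{n-m}\,\Bigl(\textstyle\sum_{k=m+1}^n\wass(u_{\Delta_j}^k,u_{\Delta_j}^{k-1})^2\Bigr)^{1/2}
\le C\sqrt{t-s+\tau_j},
\end{align*}
with $C$ independent of $j$. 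Since $\tau_j\downarrow 0$, this is an asymptotic $1/2$-Hölder bound.

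For pointwise precompactness, I would use Lemma \ref{lem:miracle}: $\wass$ on $\dens$ is isometric to the $L^2([0,M])$ distance of inverse distribution functions. For each $t$, the inverse distribution functions $\theX_{\Delta_j}(t;\cdot)$ associated to $\bar u_{\Delta_j}(t)$ lie in $\xspc$, so they are monotone increasing with values in the compact interval $I=[a,b]$. By Helly's selection theorem, any subsequence has a pointwise-a.e.\ convergent sub-subsequence, and by dominated convergence this yields convergence in $L^2([0,M])$, hence in $\wass$.

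Combining the two via the standard diagonal/refinement procedure: fix a countable dense set $\{t_\ell\}_{\ell\in\setN}\subset[0,T]$, extract successive subsequences convergent at each $t_\ell$, and diagonalize to obtain a subsequence $(j_p)$ along which $\bar u_{\Delta_{j_p}}(t_\ell)\to u_*(t_\ell)$ in $\wass$ for every $\ell$. The asymptotic equicontinuity estimate above then upgrades this to uniform convergence on $[0,T]$ in $\wass$: given $\eps>0$, pick finitely many $t_\ell$ that are $\eps^2$-dense in $[0,T]$, use convergence at these finitely many points together with the bound $\wass(\bar u_{\Delta_{j_p}}(t),\bar u_{\Delta_{j_p}}(t_\ell))\le C\sqrt{|t-t_\ell|+\tau_{j_p}}\le C(\eps+\sqrt{\tau_{j_p}})$. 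Passing to the limit in the Hölder estimate then gives $\wass(u_*(t),u_*(s))\le C\sqrt{t-s}$, so $u_*\in C^{1/2}([0,T];\wass)$.

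The only point that is not completely mechanical is verifying that the limit $u_*(t)$ actually lies in a space on which $\wass$ is defined (rather than merely being a monotone $L^2$-limit of $X$'s); but since $L^2$-convergence of inverse distribution functions corresponds to $\wass$-convergence of probability measures of mass $M$ on the compact interval $I$, and this Wasserstein space is complete, no obstruction arises. I would expect the main technical care to go into making the ``approximate equicontinuity'' argument rigorous, i.e.\ handling the residual $\sqrt{\tau_j}$ terms uniformly along the diagonal subsequence.
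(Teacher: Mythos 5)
Your proof is correct and carries out, in self-contained form, exactly what the paper delegates to the reference: the paper's ``proof'' is a single citation to the refined Arzel\`a--Ascoli theorem \cite[Proposition 3.3.1]{AGS}, and your argument (asymptotic $1/2$-H\"older equicontinuity from \eqref{eq:we}, pointwise $\wass$-precompactness via the $L^2$-representation of Lemma~\ref{lem:miracle} and Helly's theorem, then the diagonal/uniform-upgrade step) is precisely the content of that proposition specialized to the present setting. Your closing remark about the limit living in the larger complete space $(\mathcal P_M(I),\wass)$ rather than in $\dens$ is also the right thing to flag, and resolves correctly.
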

A proof can be obtained by application of \cite[Proposition 3.3.1]{AGS}.

\subsection{Compactness in $L^1$}
The following compactness property on the $\bar u_{\Delta_j}$ is at the basis for our convergence proof.
\begin{prp}
 \label{prp:compact}
 Every subsequence of $(\bar u_{\Delta_j})_{j\in\setN}$ contains a sub-subsequence such that
 the respective $\bar u_{\Delta_j}$ converge to some $u_*$,
 and the $\prss(\bar u_{\Delta_j})$ converge to $\prss(u_*)$,
 both strongly in $L^1([0,T]\times I)$.
\end{prp}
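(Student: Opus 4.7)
The plan is to combine the uniform Wasserstein equicontinuity supplied by Proposition \ref{prp:aa}, the spatial total-variation bound \eqref{eq:regularity2}, and uniform integrability coming from the energy monotonicity, and then extract strong $L^1$ compactness by a Helly-type argument at almost every time.

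First, by Proposition \ref{prp:aa} I pass to a subsequence (not relabelled) such that $\bar u_{\Delta_j}(t)\to u_*(t)$ uniformly in $t\in[0,T]$ in $\wass$. Since the energy is monotone along the discrete flow (cf.\ \eqref{eq:we}) and $\anrj(u_{\Delta_j}^0)\le\overline\anrj$, the quantity $\int_I\phi(\bar u_{\Delta_j}(t;x))\dd x$ stays uniformly bounded in $t$ and $j$. Because $\prss'(r)\to\infty$ as $r\to\infty$ (see \eqref{eq:pressume}), the potential $\phi$ is superlinear at infinity, so de la Vallée-Poussin yields uniform integrability of the family $\{\bar u_{\Delta_j}(t;\cdot)\}$ in $L^1(I)$, uniformly in $t$ and $j$.

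Second, estimate \eqref{eq:regularity2} gives $\int_0^T \var{\Phi(\bar u_{\Delta_j}(t))^2}\dd t\le C$ uniformly in $j$. By Fatou's lemma, $\liminf_j\var{\Phi(\bar u_{\Delta_j}(t))^2}$ is finite for almost every $t\in[0,T]$. Fix such a $t$, and pick any subsequence; extract a further sub-subsequence along which $\var{\Phi(\bar u_{\Delta_j}(t))^2}$ stays bounded. Since $\bar u_{\Delta_j}(t)$ has fixed mass $M$, the functions $\Phi(\bar u_{\Delta_j}(t))^2$ are uniformly bounded in $BV(I)$, and Helly's selection theorem supplies pointwise a.e.\ convergence along the sub-subsequence. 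Because $\Phi^2$ is continuous and strictly increasing (Lemma \ref{lem:phipsi}), this lifts to pointwise a.e.\ convergence of $\bar u_{\Delta_j}(t;x)$, whose limit must coincide with $u_*(t)$ by the already-established $\wass$ convergence. Uniform integrability then upgrades this via Vitali's theorem to convergence in $L^1(I)$. A standard sub-sub-subsequence argument shows that the full subsequence satisfies $\bar u_{\Delta_j}(t)\to u_*(t)$ in $L^1(I)$ for a.e.\ $t$.

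Third, the bound $\|\bar u_{\Delta_j}(t)-u_*(t)\|_{L^1(I)}\le 2M$ allows dominated convergence in $t$, yielding $\bar u_{\Delta_j}\to u_*$ strongly in $L^1([0,T]\times I)$. For the convergence of $\prss(\bar u_{\Delta_j})$, continuity of $\prss$ and $\prss(0)=0$ give pointwise a.e.\ convergence to $\prss(u_*)$. To upgrade this to $L^1$ convergence I need uniform integrability of the family $\{\prss(\bar u_{\Delta_j})\}$ on $[0,T]\times I$. This is obtained from \eqref{eq:superlinear}, which yields $\prss(r)/\Phi(r)^2\to 0$ at infinity, combined with the pointwise bound \eqref{eq:key} and the energy control \eqref{eq:regularity2}: inserting \eqref{eq:key} and integrating in $(t,x)$ provides a uniform bound on $\|\Phi(\bar u_{\Delta_j})^2\|_{L^1([0,T]\times I)}$, and de la Vallée-Poussin plus Vitali conclude.

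The principal obstacle is the subsequence extraction in the second step: the regularity bound \eqref{eq:regularity2} controls only a time-integrated quantity, so Helly's theorem cannot be invoked at every $t$. The remedy is to combine Fatou's lemma (to pass the bound to a.e.\ $t$) with the uniqueness of the $\wass$-limit $u_*(t)$ identified in the first step, which forces every sub-subsequential limit of $\bar u_{\Delta_j}(t)$ to coincide with $u_*(t)$ and thereby promotes the sub-subsequence convergence to convergence of the full sequence for a.e.\ $t$.
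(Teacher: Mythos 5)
Your strategy is reasonable and close in spirit to what the paper does — both use the time-integrated spatial regularity bound \eqref{eq:regularity2} plus the Wasserstein equicontinuity from Proposition \ref{prp:aa} — but the crucial second step of your argument has a genuine gap.

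Fatou's lemma gives, for a.e.\ $t$, that $\liminf_j \var{\Phi(\bar u_{\Delta_j}(t))^2}<\infty$. You then claim that for such $t$ one can ``pick any subsequence; extract a further sub-subsequence along which $\var{\Phi(\bar u_{\Delta_j}(t))^2}$ stays bounded.'' This is false: a finite $\liminf$ along the full sequence does \emph{not} guarantee a bounded sub-subsequence of an arbitrary subsequence. For instance, if $a_j=1$ for odd $j$ and $a_j=j$ for even $j$, then $\liminf_j a_j=1$, yet the subsequence of even indices tends to infinity and has no bounded sub-subsequence. The sub-sub-subsequence argument you invoke at the end of the second step therefore cannot get started, and so you do not obtain $\bar u_{\Delta_j}(t)\to u_*(t)$ in $L^1(I)$ for a.e.\ $t$. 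What Fatou actually delivers is only $\liminf_j\|\bar u_{\Delta_j}(t)-u_*(t)\|_{L^1(I)}=0$ for a.e.\ $t$, which is strictly weaker and insufficient to pass to $L^1([0,T]\times I)$ convergence by dominated convergence.

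This is precisely the obstruction that the Aubin--Lions--Simon type compactness theorem (the paper cites the Rossi--Savar\'e variant as Theorem \ref{thm:compactness}) is designed to overcome: a time-integrated bound on a coercive integrand plus integral equicontinuity in a weaker metric gives strong $L^1$ compactness in $t$ and $x$ jointly, without ever needing a uniform-in-$j$ bound on $\var{\Phi(\bar u_{\Delta_j}(t))^2}$ at individual times $t$. The paper checks the two hypotheses of that theorem (lower semicontinuity and compact sublevels of the integrand $\nci(u)=\tv{\Phi(u)^2}$ in Lemma \ref{lem:nci}, the integrated bound via \eqref{eq:regularity2}, and Wasserstein equicontinuity via Proposition \ref{prp:aa}) and simply invokes it. Your route can in principle be repaired without citing that theorem, but it requires a genuinely different elaboration of the second step: split $[0,T]$ by a Chebyshev argument into $A_j=\{t: F_j(t)\le M_0\}$ with $|A_j^c|\le C/M_0$, show that $u_*(t)$ itself lies in a BV-sublevel $K_R$ for $t$ outside a set of small measure, and then use that the identity map from a compact sublevel $(K_{\max(M_0,R)},\wass)$ to $L^1(I)$ is uniformly continuous to bound $\int_{A_j\cap E_R}\|\bar u_{\Delta_j}(t)-u_*(t)\|_{L^1(I)}\dd t$ via the modulus of Wasserstein convergence; sending $j\to\infty$, then $R\to\infty$, then $M_0\to\infty$ closes the argument. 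As written, though, your Fatou-at-fixed-$t$ reasoning does not give convergence of the full sequence at a.e.\ $t$. The remaining parts of your proof (uniform integrability from the energy bound, the Helly--Vitali mechanism, and the handling of $\prss(\bar u_{\Delta_j})$ via \eqref{eq:superlinear} and \eqref{eq:key}) are sound and in fact mirror the paper's estimates.
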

The proof of this proposition is an application of the Aubin-Lions compactness principle. 
Specifically, we use:
\begin{thm}\label{thm:compactness}[Adapted from Theorem 2 in \cite{RosS}]
 Assume that: 
 \begin{enumerate}
 \item There is a \emph{normal coercive integrand} $\nci:L^1(I)\to[0,\infty]$, i.e.,
   $\nci$ is measurable, lower semi-continuous and has compact sublevels in $L^1(I)$,
   for which the following is true:
   \begin{align}
     \label{eq:savare1}
     \sup_{j\in\setN}\int_0^T\nci\big(\bar u_{\Delta_j}(t)\big)\dd t < \infty.
   \end{align}
 \item The $\bar u_{\Delta_j}$ are integral equicontinuous with respect to $\wass$,
   \begin{align}
     \label{eq:savare2}
     \lim_{h\downarrow0}\sup_{j\in\setN}\int_0^{T-h} \wass\big(\bar u_{\Delta_j}(t+h),\bar u_{\Delta_j}(t)\big)\dd t = 0.
   \end{align}
 \end{enumerate}
 Then the sequence $(\bar u_{\Delta_j})_{j\in\setN}$ is relatively compact in $L^1([0,T]\times I)$.
\end{thm}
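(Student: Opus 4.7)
The plan is to apply Theorem \ref{thm:compactness} directly to $(\bar u_{\Delta_j})_{j\in\setN}$ to obtain $L^1([0,T]\times I)$-compactness, then extract an a.e.\ convergent sub-subsequence, and finally upgrade to $L^1$-convergence of $\prss(\bar u_{\Delta_j})$ via Vitali's theorem, exploiting the superlinearity \eqref{eq:superlinear} of $\Phi^2$ relative to $\prss$.

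For hypothesis (1) of Theorem \ref{thm:compactness}, I would take as normal coercive integrand
\begin{align*}
\nci(u) := \var{\Phi(u)^2},
\end{align*}
interpreted as $+\infty$ unless $u\ge 0$ a.e.\ with $\int_I u\dd x = M$. The crucial property — compactness of sublevels in $L^1(I)$ — is verified as follows: if $\nci(u)\le C$, the mean value property gives a point $x_0\in I$ with $u(x_0)\le M/(b-a)$, so the total-variation bound yields the pointwise estimate $\Phi(u(x))^2\le\Phi(M/(b-a))^2+C$; since $\Phi$ is a continuous strictly increasing bijection of $[0,\infty)$ onto itself, this is equivalent to a uniform $L^\infty$-bound on $u$. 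Combined with the BV bound on $\Phi(u)^2$, Helly's selection theorem furnishes an a.e.\ convergent subsequence, and dominated convergence promotes it to $L^1$-convergence. Lower semicontinuity of $\nci$ on $L^1(I)$ follows from the standard lsc of total variation under a.e.\ convergence together with continuity of $\Phi^2$. Hypothesis \eqref{eq:savare1} is then immediate from \eqref{eq:regularity2}, because the constant $\thec(\anrj(u_{\Delta_j}^0),\alpha(\theh_j),N_j\tau_j)$ stays uniformly bounded in $j$ under the standing assumptions \eqref{eq:uniform}.

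For the integral equicontinuity \eqref{eq:savare2}, I chain together $\wass$-distances across time-step boundaries: for $t\in[0,T-h]$, the triangle inequality and Cauchy--Schwarz yield
\begin{align*}
\wass\big(\bar u_{\Delta_j}(t+h),\bar u_{\Delta_j}(t)\big)^2
\le L_j\sum_{k:\,k\tau_j\in(t,t+h]}\wass\big(u_{\Delta_j}^k,u_{\Delta_j}^{k-1}\big)^2,
\end{align*}
with $L_j:=\lceil h/\tau_j\rceil+1$. Each term $\wass(u_{\Delta_j}^k,u_{\Delta_j}^{k-1})^2$ contributes to the $t$-integral over a set of measure at most $h$, so using \eqref{eq:we}
\begin{align*}
\int_0^{T-h}\wass\big(\bar u_{\Delta_j}(t+h),\bar u_{\Delta_j}(t)\big)^2\dd t
\le L_jh\sum_{k=1}^{N_j}\wass(u_{\Delta_j}^k,u_{\Delta_j}^{k-1})^2
\le 2L_j\tau_j h(\overline\anrj-\nrjbelow).
\end{align*}
Since $L_j\tau_j\le h+2\tau_j$, one final application of Cauchy--Schwarz in $t$ controls the $L^1$-norm by $\sqrt{T}\big(2h(h+2\tau_j)(\overline\anrj-\nrjbelow)\big)^{1/2}$, which tends to $0$ uniformly in $j$ as $h\downarrow 0$ because the $\tau_j$ form a bounded sequence.

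Theorem \ref{thm:compactness} now yields a subsequence with $\bar u_{\Delta_j}\to u_*$ strongly in $L^1([0,T]\times I)$, and a further extraction produces pointwise a.e.\ convergence. Continuity of $\prss$ then gives pointwise a.e.\ convergence of $\prss(\bar u_{\Delta_j})\to\prss(u_*)$, and it remains to upgrade this to strong $L^1$-convergence via Vitali. Equiintegrability of $\{\prss(\bar u_{\Delta_j})\}$ on $[0,T]\times I$ follows from \eqref{eq:superlinear} used as the de la Vallée Poussin test: the pointwise bound on $\Phi(\bar u_{\Delta_j})^2$ already used in the $\nci$-argument produces a uniform bound on $\iint\Phi(\bar u_{\Delta_j})^2\dd x\dd t$, and since $\Phi^2/\prss\to\infty$ at infinity, this uniform $\Phi^2$-bound forces equiintegrability of $\prss(\bar u_{\Delta_j})$. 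I expect the delicate step to be the compact-sublevels check for $\nci$, i.e.\ extracting the uniform $L^\infty$-bound from a BV bound on $\Phi(u)^2$ coupled with the mass constraint; once this is in place, everything else is a routine combination of Aubin--Lions and Vitali.
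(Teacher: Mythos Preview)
You have not proved Theorem \ref{thm:compactness}. That theorem is an abstract Aubin--Lions-type compactness criterion quoted from \cite{RosS}; the paper does not prove it, and neither do you. What you have written is a proof of Proposition \ref{prp:compact}: you \emph{apply} Theorem \ref{thm:compactness} by verifying its hypotheses for the particular sequence $(\bar u_{\Delta_j})$ and then push through to $\prss(\bar u_{\Delta_j})$. If the intended task was really the cited theorem, you would have to argue at the level of a general sequence in a general metric space, not for this specific family.

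If the intended target was in fact Proposition \ref{prp:compact}, your argument is essentially the paper's, with two minor deviations. First, for compact sublevels of $\nci$ the paper (Lemma \ref{lem:nci}) argues via BV-compactness of $\Phi(u)^2$ together with continuous invertibility of $r\mapsto\Phi(r)^2$ and Vitali; you instead extract a uniform $L^\infty$-bound on $u$ from the mass constraint plus the TV-bound and then use Helly with dominated convergence. Your route is slightly more elementary but relies on the one-dimensional fact that a TV-bound plus a single reference value controls the sup-norm; the paper's route is closer to what one would write in higher dimensions. Second, the paper reduces \eqref{eq:savare2} to an ``easy exercise'' after first invoking Proposition \ref{prp:aa}; you carry out the estimate directly from \eqref{eq:we}, which is cleaner. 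The Vitali step for $\prss(\bar u_{\Delta_j})$ is identical in both.
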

In order to define $\nci$, we first recall that the \emph{total variation} of a function $f\in L^1(I)$ is given by
\begin{align*}
 \tv{f} := \sup \bigg\{ \int_a^b f(x)\varphi'(x)\dd x \,\bigg|\, \varphi\in C^1_0(I),\,\sup_{x\in I} |\varphi(x)|\le 1\bigg\}.
\end{align*}
It is easily checked that for piecewise constant densities $u\in\densNj$, and with $\Phi$ from \eqref{eq:bigpsi}, 
we have
\begin{align}
 \label{eq:tvvar}
 \tv{\Phi(u)^2} = \var{\Phi(u)^2} = \sum_{k=1}^{K_j-1}\big|\Phi(u_{k+1})^2-\Phi(u_k)^2\big|.
\end{align}
Now let $\nci:L^1(I)\to\setR\cup\{+\infty\}$ be given by
\begin{align*}
 \nci(u) =
 \begin{cases}
   \tv{\Phi(u)^2} & \text{if $u\in\overline{\dens}$}, \\
   +\infty & \text{otherwise};
 \end{cases}
\end{align*}
where $\overline{\dens}$ denotes the closure of $\dens$ in $L^1(I)$,
which consists of all non-negative $L^1$-functions with integral equal to $M$.
\begin{lem}
 \label{lem:nci}
 The functional $\nci$ defined above is lower semi-continuous and has relatively compact sublevels.
\end{lem}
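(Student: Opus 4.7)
The plan is to establish both properties simultaneously from a single chain of observations: a $BV$-bound on $\Phi(u)^2$ combined with the mass constraint forces an $L^\infty$-bound on $u$, after which Helly's selection theorem and dominated convergence finish the job. Three ingredients do the work: the superlinearity \eqref{eq:superlinear} of $\Phi^2$, the classical $BV$-compactness on the interval $I$, and the standard lower semicontinuity of the total variation with respect to $L^1$-convergence.

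First, I would show that any $u\in\overline{\dens}$ with $\nci(u)\le C$ satisfies a uniform $L^\infty$-bound. Since $\int_I u\dd x=M$ and $u\ge 0$, the set $\{x\in I:u(x)\le M/(b-a)\}$ has positive measure, so $\essinf_{x\in I}\Phi(u(x))^2\le \Phi(M/(b-a))^2$. For a one-dimensional $BV$ function one has $\|f\|_\infty \le \essinf f+\tv{f}$, hence
\begin{align*}
\|\Phi(u)^2\|_\infty\le \Phi\Big(\frac{M}{b-a}\Big)^2+C.
\end{align*}
By \eqref{eq:superlinear}, $\Phi^2$ is a continuous strictly increasing bijection $[0,\infty)\to[0,\infty)$ whose inverse is continuous and maps bounded sets to bounded sets. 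Consequently $\|u\|_\infty\le R(C)$ for a constant $R(C)$ depending only on $C$, $M$, $b-a$ and $\Phi$.

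For the compactness of sublevels, take any sequence $(u_j)$ with $\nci(u_j)\le C$. The step above provides a uniform $L^\infty$-bound together with the uniform $BV$-bound on $(\Phi(u_j)^2)$. Helly's selection theorem yields a subsequence $\Phi(u_{j_k})^2\to g$ in $L^1(I)$ and pointwise a.e.; applying the continuous inverse $(\Phi^2)^{-1}$ gives $u_{j_k}\to u_*:=(\Phi^2)^{-1}(g)$ pointwise a.e., and the uniform $L^\infty$-bound combined with dominated convergence upgrades this to $L^1(I)$-convergence. Since $\overline{\dens}$ is closed in $L^1(I)$, $u_*\in\overline{\dens}$, so sublevel sets are relatively compact. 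For lower semicontinuity, suppose $u_j\to u$ in $L^1(I)$ with $\liminf_j\nci(u_j)<\infty$; after extracting a subsequence realizing the liminf, the bound of the first step applies uniformly, so (along a further subsequence) $u_j\to u$ pointwise a.e.\ with uniform $L^\infty$-bound, and continuity of $\Phi^2$ plus dominated convergence yield $\Phi(u_j)^2\to \Phi(u)^2$ in $L^1(I)$. The classical lower semicontinuity of $\tv{\cdot}$ under $L^1$-convergence finishes the argument, and the limit $u$ belongs to the closed set $\overline{\dens}$.

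The only genuinely nontrivial step is the passage from a $BV$-bound on $\Phi(u)^2$ to an $L^\infty$-bound on $u$ itself; this is precisely where superlinearity \eqref{eq:superlinear} is used, and without it sublevels of $\nci$ would fail to be equi-integrable in $L^1$. The rest is routine application of Helly's theorem, continuity of $\Phi^2$, and lower semicontinuity of total variation.
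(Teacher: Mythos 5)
Your proof is correct, and it takes a route that is genuinely different in one key step from the paper's argument. The paper first shows the image set $B_c=\{\Phi(u)^2:u\in A_c\}$ is relatively compact in $L^1(I)$ by a BV-compactness theorem, and then upgrades $L^1$-convergence of $\Phi(u_\ell)^2$ to $L^1$-convergence of $u_\ell$ by passing through pointwise a.e.\ convergence and \emph{Vitali's theorem}: the superlinear growth \eqref{eq:superlinear} of $\Phi^2$ together with the uniform $L^1$-bound on $\Phi(u_\ell)^2$ gives uniform integrability of $u_\ell$. You instead observe that the mass constraint forces $\essinf \Phi(u)^2\le\Phi(M/(b-a))^2$, the one-dimensional BV oscillation bound $\esssup f-\essinf f\le\tv{f}$ then yields $\|\Phi(u)^2\|_\infty\le\Phi(M/(b-a))^2+C$ on a sublevel, and inverting the continuous bijection $\Phi^2:[0,\infty)\to[0,\infty)$ turns this into a uniform $L^\infty$-bound on $u$; thereafter dominated convergence on the finite interval $I$ replaces Vitali. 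The paper actually exploits the same one-dimensional oscillation bound, but only to control $\|\Phi(u)^2\|_{L^1}$ (needed for the BV-norm bound); you push it one step further to $\|u\|_\infty$, which is a strictly stronger observation and makes the subsequent limit passages elementary. Both proofs invoke \eqref{eq:superlinear}, but for different reasons: the paper for de la Vallée-Poussin/Vitali uniform integrability, you only for the unboundedness of $\Phi^2$ (so that its inverse is globally defined and bounded on bounded sets) — your final remark about equi-integrability slightly misdescribes your own use of it, but the argument itself is sound. One minor technical point you share with the paper: Helly's theorem and the BV oscillation bound are for pointwise representatives, so one should implicitly pass to good representatives of the $L^1$-equivalence classes; this is harmless on an interval.
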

\begin{proof}[Proof of Lemma \ref{lem:nci}]
 Let $A_c:=\nci^{-1}((-\infty;c])\subset L^1(I)$ be a sublevel of $\nci$.
 By \cite[Theorem 1.19]{Giusti}, the set $B_c:=\{\Phi(u)^2\,|\,u\in A_c\}$ is relatively compact in $L^1(I)$;
 here we use that our domain $I$ is an interval, 
 so that $\tv{\Phi(u)^2}\le c$ and $\int_I u(x)\dd x=M$ induce a uniform bound on the BV-norm of $\Phi(u)^2$.

 Thus, if $(u_\ell)$ is a sequence in $A_c$, converging to $u_0$ in $L^1(I)$,
 then also $(\Phi(u_\ell)^2)$ converges to $\Phi(u_0)^2$ in $L^1(I)$.
 By lower semi-continuity of the total variation $\tv{\cdot}$ \cite[Theorem 1.9]{Giusti},
 the lower semi-continuity of $\nci$ follows.

 To conclude compactness of $A_c$, 
 it suffices to prove that the mapping $u\mapsto\Phi(u)^2$ is $L^1(I)$-continuously invertible.
 For that, let a sequence $(f_\ell)_{\ell\in\setN}$ in $B_c$ be given, which converges to some $f_0$ in $L^1(I)$.
 Since the map $r\mapsto\Phi(r)^2$ is strictly increasing, positive, and continuous with superlinear growth \eqref{eq:superlinear}, 
 it possesses a strictly increasing, positive and continuous inverse with sublinear growth.
 Hence, there are a uniquely determined sequence of functions $u_\ell\in A_c$ such that $\Phi(u_\ell)^2=f_\ell$ for all $\ell\in\setN$,
 and a unique $u_0\in A_c$ with $\Phi(u_0)^2=f_0$.
 We wish to show that $u_\ell$ converges to $u_0$ in $L^1(I)$.
 By standard arguments, we can assume without loss of generality that the $f_\ell$ converge to $f_0$ pointwise a.e.
 By continuous invertibility of $r\mapsto\Phi(r)^2$, the $u_j$ converge to $u_0$ pointwise a.e.
 Moreover, by construction,
 \begin{align*}
   \sup_{\ell\in\setN} \int_a^b \Phi(u_\ell(x))^2\dd x = \sup_{\ell\in\setN}\int_a^b f_\ell(x)\dd x < \infty,
 \end{align*}
 so we can invoke Vitali's theorem --- recall the superlinear growth \eqref{eq:superlinear} --- to conclude strong convergence of $u_\ell$ to $u_0$.
\end{proof}
\begin{proof}[Proof of Proposition \ref{prp:compact}]
 It suffices to show that every subsequence of $(\bar u_{\Delta_j})_{j\in\setN}$ contains a sub-subsequence which is relatively compact.
 In view of Proposition \ref{prp:aa}, we may thus assume --- without loss of generality --- 
 that $(\bar u_{\Delta_j})_{j\in\setN}$ converges uniformly w.r.t.\ $t\in[0,T]$ in $\wass$ to a curve $u_*\in C^{1/2}([0,T];\wass)$.
 The verification of \eqref{eq:savare2} then becomes an easy exercise, which is left to the reader.

 We verify \eqref{eq:savare1}.
 As remarked in \eqref{eq:tvvar}, we have
 \begin{align*}
   \nci(\bar u_{\Delta_j}^n) = \var{\Phi(u_{\Delta_j}^n)^2}
 \end{align*}
 for all $n=1,\ldots,N_j$.
 Thus, the regularity estimate \eqref{eq:regularity2} implies
 \begin{align*}
   \int_0^T\nci(\bar u_{\Delta_j})
   \le \tau_j\sum_{n=1}^{N_j} \nci(u_{\Delta_j}^n) 
   \le \thec\big(\overline\anrj,\overline\alpha,T+\tau_j\big),
 \end{align*}
 with $\thec$ given in \eqref{eq:thec},
 and with $\overline\anrj$, $\overline\alpha$ from \eqref{eq:uniform}.
 This yields the uniform bound \eqref{eq:savare1}.

 Thus Theorem \ref{thm:compactness} applies and provides relative compactness of $(\bar u_{\Delta_j})_{j\in\setN}$ in $L^1([0,T]\times I)$.
 Since $L^1$-convergence implies weak convergence, it actually follows that $\bar u_{\Delta_j}$ converges to $u_*$ in $L^1([0,T]\times I)$.
 Without loss of generality, we may even assume that $\bar u_{\Delta_j}$ converges to $u_*$ a.e.\ on $[0,T]\times I$.
 By continuity of $\prss$, also $\prss(\bar u_{\Delta_j})$ converges to $\prss(u_*)$ a.e.\ on $[0,T]\times I$.
 Further,
 \begin{align*}
   \int_0^T\int_I\Phi\big(\bar u_{\Delta_j}(t;x)\big)^2\dd x\dd t
   &\le (b-a)\tau\sum_{n=1}^{N_j}\Big[\Phi\Big(\frac{M}{b-a}\Big)^2 + \var{\Phi(\bar u_{\Delta_j}^n)^2}\Big] \\
   &\le 2(b-a)\thec\big(\overline\anrj,\overline\alpha,T+\tau_j\big),
 \end{align*}
 which is $j$-uniformly bounded because of the regularity estimate \eqref{eq:regularity2}.
 By the growth property \eqref{eq:superlinear} of $\Phi$, we can invoke Vitali's theorem to conclude 
 that $\prss(\bar u_{\Delta_j})$ tends to $\prss(u_*)$ in $L^1([0,T]\times I)$.
\end{proof}

\subsection{Weak formulation}
Combining the compactness results from Proposition \ref{prp:aa} and Proposition \ref{prp:compact},
we know that every subsequence of $(\Delta_j)_{j\in\setN}$ contains a sub-subsequence 
for which $\bar u_{\Delta_j}$ converges to some limit 
\begin{align*}
 u_*\in C^{1/2}([0,T];\wass)\cap L^1([0,T]\times I),
\end{align*}
uniformly w.r.t.\ $t\in[0,T]$ in $\wass$, and strongly in $L^1([0,T]\times I)$;
finally, also $\prss(\bar u_{\Delta_j})$ converges to $\prss(u_*)$ in $L^1([0,T]\times I)$.
To simplify notations, we denote that sub-subsequence simply by $(\bar u_{\Delta})$, bearing in mind that $\Delta=\Delta_j$.
In this subsection, we prove that every such limit $u_*$ is a weak solution to the initial value problem \eqref{eq:adde}.
\begin{prp}
 \label{prp:weak}
 $u_*$ satisfies the weak formulation
 \begin{align}
   \label{eq:weak}
   \int_0^T\int_I u_*\partial_t\varphi\dd x\dd t
   = \int_0^T\int_I u_*V_x\partial_x\varphi\dd x\dd t
   - \int_0^T\int_I \prss(u_*)\partial_{xx}\varphi\dd x\dd t
 \end{align}
 for all test functions $\varphi$ from
 \begin{align}
   \label{eq:testfunc}
   \tst:=\big\{\varphi\in C^\infty([0,T]\times I)\,\big|\,\supp\varphi\subset(0,T)\times I,\,\varphi_x(t;a)=\varphi_x(t;b)=0 \text{ f.a. $t\in[0,T]$}\big\}.
 \end{align}
 Moreover, $u_*$ attains the initial datum $u^0$ weakly-$\star$ as $t\downarrow0$.
\end{prp}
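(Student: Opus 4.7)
My plan is to derive a discrete counterpart of \eqref{eq:weak} directly from the Euler--Lagrange system \eqref{eq:el}, and then pass to the limit along the subsequence already extracted in Propositions \ref{prp:aa} and \ref{prp:compact}. Fix $\varphi \in \tst$. For each step $n = 1, \ldots, N_j$, I test \eqref{eq:el} with the vector $\vec\phi^n \in \setR^{K_j-1}$ whose components are $\phi_m^n := \varphi(n\tau_j, x_m^n)$ for $m = 1, \ldots, K_j-1$, using the auxiliary boundary values $\phi_0^n := \varphi(n\tau_j, a)$ and $\phi_{K_j}^n := \varphi(n\tau_j, b)$ wherever boundary contributions enter. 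Multiplying by $\tau_j$ and summing over $n$ yields the master discrete identity.

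Two Abel summations then do the bulk of the work. Spatially, on the $\psi'$-differences in \eqref{eq:gradient}, combined with the identity $\psi'(s) = \phi(0) - P(1/s)$ from Lemma \ref{lem:phipsi}, the diffusion contribution collapses into $\tau_j\sum_n \int_I P(\bar u_{\Delta_j}^n)\,\partial_x\varphi(n\tau_j,\cdot)\,dx$ modulo $\xi$-boundary terms $\propto P(u_1^n), P(u_{K_j}^n)$; the convection contribution $-\tau_j\sum_n\int_0^M V_x(X^n)\tilde\phi^n\,d\xi$, where $\tilde\phi^n := \sum_{m=1}^{K_j-1}\phi_m^n\hatf_m$ is the piecewise-linear $\xi$-interpolant, becomes $-\tau_j\sum_n\int_I V_x\varphi\,u_{\Delta_j}^n\,dx$ under the Eulerian change of variables, modulo a correction supported on $[0,\delta_1^{(j)}]\cup[M-\delta_{K_j}^{(j)},M]$ tracking the mismatch between $\tilde\phi^n$ and $\varphi(n\tau_j,\cdot)\circ X^n$ near $\xi = 0, M$. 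Temporally, on $\sum_n\int\tilde\phi^n(X^n-X^{n-1})\,d\xi$, Abel summation moves the increment onto $\tilde\phi^n$; temporal boundary terms vanish since $\supp \varphi \subset (0, T)$, and the Taylor splitting $\phi_m^{n+1}-\phi_m^n = \tau_j\varphi_t + \varphi_x(x_m^{n+1}-x_m^n) + O(\tau_j^2 + |x_m^{n+1}-x_m^n|^2)$ isolates a principal $\tau_j\varphi_t$-piece from a Lagrangian transport remainder, the latter being $\ell^2(n)$-bounded thanks to the Wasserstein energy estimate \eqref{eq:we}.

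Passage to $j \to \infty$ relies on the strong $L^1([0,T]\times I)$-convergences $\bar u_{\Delta_j}\to u_*$ and $P(\bar u_{\Delta_j})\to P(u_*)$ from Proposition \ref{prp:compact}, the uniform Wasserstein convergence from Proposition \ref{prp:aa}, and the uniform vanishing $\max_n\delmax(\xvec_{\Delta_j}^n)\to 0$ from \eqref{eq:xuniform} to kill the $\xi$-boundary correction. The resulting identity has a Lagrangian form corresponding formally to $\partial_t U_* = \partial_x P(u_*) + V_x u_*$ tested against $\varphi$; a further integration by parts in $x$ at the continuous level --- justified by the $L^1_t\,\mathrm{BV}_x$-regularity of $P(u_*)$ implicit in \eqref{eq:regularity2} via the superlinear growth \eqref{eq:superlinear} of $\Phi^2$, together with the boundary conditions $V_x(a)=V_x(b)=0$ and $\varphi_x(a)=\varphi_x(b)=0$ --- transforms it into \eqref{eq:weak}. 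For the initial trace, the uniform $1/2$-Hölder continuity of $u_*$ in $\wass$ (Proposition \ref{prp:aa}), combined with the hypothesis that $u_{\Delta_j}^0\to u^0$ weakly in $L^1(I)$, yields $\wass(u_*(t), u^0)\lesssim t^{1/2}$, hence narrow --- and, thanks to the uniform mass bound, equivalently weak-$\star$ --- convergence as $t\downarrow 0$.

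The main technical obstacle is the reconciliation of the $\xi$-boundary flux terms $\propto P(u_1^n), P(u_{K_j}^n)$ produced by the spatial Abel summation with the target form \eqref{eq:weak}: these depend on one-sided pointwise values of the discrete density near $x = a, b$, which are not directly controlled by $L^1$-convergence alone. They must be reabsorbed at the continuous level through the final integration by parts, exploiting the inherent no-flux structure of the problem. A secondary difficulty is the Lagrangian transport remainder in the temporal Abel summation, where Cauchy--Schwarz against the energy estimate \eqref{eq:we} is needed to ensure it vanishes in the limit.
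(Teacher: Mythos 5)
Your plan contains a genuine gap in the choice of discrete test vector, and this matters structurally. You test the Euler--Lagrange system \eqref{eq:el} against $\vec\phi^n$ with $\phi_m^n = \varphi(n\tau_j,x_m^n)$. One spatial Abel summation, together with $\psi'(z_k)=\phi(0)-\prss(u_k)$, then produces the discrete analogue of $\int_I \prss(\bar u)\,\varphi_x\dd x$ plus boundary contributions $\prss(u_{K_j}^n)\varphi(t,b)-\prss(u_1^n)\varphi(t,a)$. As you correctly observe, these boundary values of $\prss(\bar u_{\Delta_j}^n)$ are not controlled by the a~priori estimates (there is no $L^\infty$ bound under the hypotheses of Theorem~\ref{thm:main}); but worse, even \emph{if} they converged, the proposed ``reabsorption by a continuous integration by parts'' cannot produce \eqref{eq:weak}: the resulting limiting relation tests $\prss(u_*)$ against $\varphi_x$ rather than $\varphi_{xx}$, so that an additional IBP would transfer you to anti-derivatives $\hat\varphi$ of $\varphi$, and the surviving boundary term $\prss(u_*)\hat\varphi_x\vert_a^b=\prss(u_*)\varphi\vert_a^b$ only cancels if $\varphi(a)=\varphi(b)=0$. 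Test functions in $\tst$ satisfy $\varphi_x(a)=\varphi_x(b)=0$, which is a \emph{different} constraint; your argument effectively proves the weak formulation only for a strictly smaller class of test functions encoding Dirichlet rather than Neumann data.

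The paper avoids this precisely by testing with the gradient vector field $\velo=\Wmat^{-1}\grd\auxil_\theh$ of the Lagrangian observable $\auxil_\theh(\xvec)=\int_I\rho\,\convf_\theh[\xvec]\dd x$, which Lemma~\ref{lem:velo} identifies asymptotically with $\vec{\rho_x}(\xvec)$, i.e.\ the vector of \emph{derivative} values $\rho_x(x_m)$. With this choice the spatial Abel summation yields $\sum_k\psi'(z_k^n)\big(\rho_x(x_k^n)-\rho_x(x_{k-1}^n)\big)$, whose boundary terms are proportional to $\rho_x(a)=\rho_x(b)=0$ and vanish identically; the sum collapses (via $\psi'(z)=\phi(0)-\prss(1/z)$ and the mean value theorem) directly into $-\int_I\prss(\bar u_\Delta^n)\,\rho_{xx}(\hat x)\dd x$, matching the $\partial_{xx}\varphi$-term of \eqref{eq:weak} without any further IBP. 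Simultaneously, the temporal step is handled as the increment $\auxil_\theh(\xvec^n)-\auxil_\theh(\xvec^{n-1})=\int_I\rho(u^n-u^{n-1})\dd x$, estimated by a spatial Taylor expansion of $\rho$ alone (yielding the remainder \eqref{eq:firem1}, controlled by \eqref{eq:we}); this sidesteps the ``Lagrangian transport'' piece $\varphi_x(x_m^{n+1}-x_m^n)$ that your temporal Abel summation generates. Note also that this piece is \emph{not} a remainder: the $\ell^2(n)$-bound you invoke shows it is $O(1)$, not $o(1)$ --- it is in fact an essential main term needed to reconstruct $\partial_t\int\rho\,u\dd x$ from $\int\rho(\theX)\theX_t\dd\xi$. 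In short, replacing $\varphi(n\tau_j,x_m^n)$ by $\varphi_x(n\tau_j,x_m^n)$ in the test vector is not a cosmetic change but the crucial step that makes the boundary terms disappear and aligns the resulting identity with \eqref{eq:weak}; with your current choice the argument does not close.
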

\begin{rmk}
 Since weak solutions to \eqref{eq:adde} are unique (this follows, e.g., by metric contraction of the gradient flow), 
 we conclude a posteriori that the entire sequence $(u_{\Delta_j})_{j\in\setN}$ converges to the solution $u_*$.  
\end{rmk}
\begin{rmk}
 By our definition \eqref{eq:testfunc} of test functions, 
 the weak formulation \eqref{eq:weak} automatically induces homogeneous Neumann boundary conditions on $u$.
\end{rmk}
The weak formulation \eqref{eq:weak} is obtained in the limit $j\to\infty$ from a certain fully discrete variant of the weak formulation.
The latter is derived by studying suitable variations of the minimizers $u_{\Delta_j}^n$.
In order to discuss this perturbation,
let $\rho\in C^\infty(I)$ with $\rho_x(a)=\rho_x(b)=0$ be given, 
and let $\kappa>0$ be such that
\begin{align}
 \label{eq:kappa}
 |\rho_x(x)|\le\kappa,\ |\rho_{xx}(x)|\le\kappa,\ |\rho_{xxx}(x)|\le\kappa \quad \text{for all $x\in I$}.
\end{align}
Further, fix $j\in\setN$ and also $n\in\setN$.
We shall omit the index $j$ in the following.

Introduce the functionals $\auxil:\xspc\to\setR$ and $\auxil_\theh:\xseq\to\setR$ by
\begin{align*}
 \auxil(\theX) = \int_0^M \rho(\theX)\dd\xi \quad\text{and}\quad \auxil_\theh(\xvec)=\auxil(\convX_\theh[\xvec]).
\end{align*}
The derivative of $\auxil_\theh$ is given by
\begin{align*}
 \big[\grd\auxil_\theh(\xvec)\big]_k = \int_0^M \rho_x\big(\convX_{\theh_j}[\xvec]\big)\hatf_k\dd\xi 
 \quad\text{for $k=1,\ldots,K-1$}.
\end{align*}
The variations we study are those induced by the \emph{gradient vector field} $\velo:=\Wmat^{-1}\grd\auxil_\theh$,
i.e.,
\begin{align}
 \label{eq:velo}
 \Wmat\velo(\xvec) = \grd\auxil_\theh(\xvec).
\end{align}
This vector field has a nice asymptotic expansion.
\begin{lem}
 \label{lem:velo}
 For every $\xvec\in\xseq$, define $\vec{\rho_x}(\xvec)\in\setR^{K-1}$ by
 \begin{align}
   \label{eq:rhox}
   \vec{\rho_x}(\xvec) &= \big(\rho_x(x_1),\rho_x(x_2),\ldots,\rho_x(x_{K-1})\big).
 \end{align}
 Then the residual vector $\nu=\velo(\xvec)-\vec{\rho_x}(\xvec)$ satisfies
 \begin{align}
   \label{eq:important}
   \nu^T\Wmat\nu \le C\kappa^2\alpha(\theh)M\delmax(\xvec)^2, 
 \end{align}
 with $\kappa$ defined in \eqref{eq:kappa} and some universal constant $C$.
\end{lem}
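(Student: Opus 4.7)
My plan is to recognize $\Wmat$ as the Gram matrix of the hat functions, identify $\Wmat\vec{\rho_x}(\xvec)$ as an $L^2$-pairing of $\hatf_m$ with the nodal piecewise-linear interpolant of $\rho_x\circ\convX_\theh[\xvec]$, and then conclude via a classical linear-interpolation error bound together with Cauchy--Schwarz.

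Concretely, I would first observe from \eqref{eq:Wmat} that $\Wmat_{m,k}=\int_0^M\hatf_m\hatf_k\dd\xi$, so for any $v,w\in\setR^{K-1}$ one has $v^T\Wmat w=\int_0^M V\,W\dd\xi$ with $V:=\sum_m v_m\hatf_m$ and $W:=\sum_k w_k\hatf_k$; in particular $\nu^T\Wmat\nu=\|N\|_{L^2([0,M])}^2$ for $N:=\sum_m\nu_m\hatf_m$. Applying this identity with $w=\vec{\rho_x}(\xvec)$ gives
\[
[\Wmat\vec{\rho_x}(\xvec)]_m = \int_0^M\hatf_m(\xi)\,P(\xi)\dd\xi,\qquad P(\xi):=\sum_{k=1}^{K-1}\rho_x(x_k)\hatf_k(\xi) = \sum_{k=0}^{K}\rho_x(x_k)\hatf_k(\xi),
\]
where the last equality uses $\rho_x(x_0)=\rho_x(a)=0$ and $\rho_x(x_K)=\rho_x(b)=0$. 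Crucially, $P$ is then the continuous piecewise-linear interpolant of $h:=\rho_x\circ\convX_\theh[\xvec]$ at all nodes $\xi_0,\ldots,\xi_K$, since $\convX_\theh[\xvec](\xi_k)=x_k$. Combining with $[\Wmat\velo]_m=[\grd\auxil_\theh(\xvec)]_m=\int_0^M h\,\hatf_m\dd\xi$, we obtain
\[
\nu^T\Wmat\nu = \int_0^M N(\xi)\bigl(h(\xi)-P(\xi)\bigr)\dd\xi.
\]

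To close the argument, I would apply the textbook linear-interpolation error bound. On each $[\xi_{k-1},\xi_k]$, $\convX_\theh[\xvec]$ is affine with slope $z_k=(x_k-x_{k-1})/\delta_k$, so by the chain rule $h''(\xi)=z_k^2\rho_{xxx}(\convX_\theh[\xvec](\xi))$ and $|h''|\le\kappa z_k^2$. The classical estimate then yields $\|h-P\|_{L^\infty([\xi_{k-1},\xi_k])}\le\frac{\delta_k^2}{8}\sup|h''|\le\frac{\kappa(x_k-x_{k-1})^2}{8}\le\frac{\kappa}{8}\delmax(\xvec)^2$, and integrating gives $\|h-P\|_{L^2([0,M])}^2\le\kappa^2M\delmax(\xvec)^4/64$. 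Cauchy--Schwarz together with $\nu^T\Wmat\nu=\|N\|_{L^2}^2$ then yields $\nu^T\Wmat\nu\le\|N\|_{L^2}\|h-P\|_{L^2}$, and dividing by $\sqrt{\nu^T\Wmat\nu}$ produces $\nu^T\Wmat\nu\le\|h-P\|_{L^2}^2\le\kappa^2M\delmax(\xvec)^4/64$. This is in fact slightly stronger than \eqref{eq:important} (no $\alpha(\theh)$ factor, and an extra power of $\delmax(\xvec)^2$); the $\alpha(\theh)$ in the stated bound presumably comes from a coarser componentwise estimate that one might prefer in order to avoid the Gram-matrix reinterpretation. The only genuine subtlety is observing that the boundary hypothesis $\rho_x(a)=\rho_x(b)=0$ lets one extend the sum defining $P$ to include the endpoints $k=0,K$ for free, so that $P$ becomes the full nodal interpolant of $h$ and the textbook $L^\infty$ interpolation bound applies directly; everything else is routine.
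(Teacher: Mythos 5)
Your proof is correct, and it takes a genuinely different — and in fact sharper — route than the paper's.

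The paper starts from the same identity $\mu := \Wmat\nu = \grd\auxil_\theh(\xvec)-\Wmat\vec{\rho_x}(\xvec)$, but then estimates each component $\mu_k$ separately: it rewrites $\mu_k$ as the error of a piecewise \emph{constant} (``mass-lumped'') approximation $\frac23\rho_x(x_k)+\frac13\rho_x(x_{k\mp1})$ to $\rho_x\circ\convX_\theh[\xvec]$ on $[\xi_{k-1},\xi_k]$ and $[\xi_k,\xi_{k+1}]$, and a Taylor expansion gives only the first-order estimate $|\mu_k|\le C_1\kappa(\delta_k+\delta_{k+1})\delmax(\xvec)$. (The exponent $2$ on $\delmax(\xvec)$ in the paper's displayed bound for $|\mu_k|$ is a typo; the final line $\nu^T\Wmat\nu\le 24C_1^2\kappa^2\alpha(\theh)M\delmax(\xvec)^2$ is only consistent with the first-order form.) Reassembling $\nu^T\Wmat\nu=\mu^T\Wmat^{-1}\mu$ from the $\mu_k$ then uses the lower bound in \eqref{eq:posdef}, which costs a factor $1/\delmin(\theh)$ and hence produces $\alpha(\theh)$.

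You instead recognize $\Wmat$ as the Gram matrix of the hat functions, so that $\nu^T\Wmat\nu=\|N\|_{L^2}^2$ and $\mu_m=\int_0^M(h-P)\hatf_m\dd\xi$ with $P$ the nodal piecewise \emph{linear} interpolant of $h=\rho_x\circ\convX_\theh[\xvec]$ — using $\rho_x(a)=\rho_x(b)=0$ to include the endpoint nodes, the same boundary hypothesis the paper uses implicitly in the rows $k=1$ and $k=K-1$. The classical second-order interpolation bound together with $|h''|\le\kappa z_k^2$ and $\delta_k z_k=x_k-x_{k-1}$ gives $\|h-P\|_{L^\infty}\le\tfrac{\kappa}{8}\delmax(\xvec)^2$, and the Cauchy--Schwarz step $\nu^T\Wmat\nu=\int N(h-P)\le\|N\|_{L^2}\|h-P\|_{L^2}$ cancels the $\|N\|_{L^2}$ without ever invoking \eqref{eq:posdef}, so no $\alpha(\theh)$ appears. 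The resulting $\nu^T\Wmat\nu\le\kappa^2 M\delmax(\xvec)^4/64$ is an order better in $\delmax(\xvec)$ and $\alpha$-free; both forms are more than adequate for the way the lemma is used in \eqref{eq:firem2}, where only $\delmax(\xvec_\Delta^n)\to0$ matters. The paper's approach is more elementary (no interpolation theory, purely componentwise Taylor), while yours is cleaner and exploits the finite-element structure of $\Wmat$ to get the optimal rate.
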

\begin{proof}
 We start by estimating
 \begin{align*}
   \mu := \Wmat\nu = \Wmat\big(\velo(\xvec)-\vec{\rho_x}[\xvec]\big) =\grd\auxil_\theh(\xvec)-\Wmat\vec{\rho_x}[\xvec].
 \end{align*}
 By definition, we have
 \begin{align*}
   \convX_\theh[\xvec](\xi) 
   = x_k + \frac{x_k-x_{k-1}}{\delta_k}(\xi-\xi_k)
   = x_{k-1} + \frac{x_k-x_{k-1}}{\delta_k}(\xi-\xi_{k-1})
 \end{align*}
 for all $\xi\in[\xi_{k-1},\xi_k]$.
 Using the explicit form of $\Wmat$ given in \eqref{eq:Wmat},
 we calculate:
 \begin{align*}
   \mu_k 
   &= \int_{\xi_{k-1}}^{\xi_k} \Big[\rho_x(\convX_\theh[\xvec])-\big(\frac23\rho_x(x_k)+\frac13\rho_x(x_{k-1})\big)\Big]\hatf_k\dd\xi \\
   & \qquad + \int_{\xi_{k}}^{\xi_{k+1}} \Big[\rho_x(\convX_\theh[\xvec])-\big(\frac23\rho_x(x_k)+\frac13\rho_x(x_{k+1})\big)\Big]\hatf_k\dd\xi \\
   &= (x_k-x_{k-1})\,\delta_k\int_0^1\Big[\frac{2\rho_{xx}(\hat x)}3(1-s)+\frac{\rho_{xx}(\check x)}3s\Big](1-s)\dd s \\
   & \qquad + (x_{k+1}-x_{k})\,\delta_{k+1}\int_0^1\Big[\frac{2\rho_{xx}(\hat x)}3(1-s)+\frac{\rho_{xx}(\check x)}3s\Big](1-s)\dd s ,
 \end{align*}
 where $\hat x,\check x\in I$ denote suitable intermediate values, depending on $s$.
 It follows that there is a universal constant $C_1$ such that
 \begin{align*}
   |\mu_k| \le C_1 \sup_{x\in I}|\rho_{xx}(x)|\max_{m=1,\ldots,K}(\delta_k+\delta_{k+1})\delmax(\xvec)^2
 \end{align*}
 for every $k=1,\ldots,K-1$.
 Recalling the lower estimate on $\Wmat$ in \eqref{eq:posdef},
 it follows for $\nu=\Wmat^{-1}\mu$ that
 \begin{align*}
   \nu^T\Wmat\nu 
   = \mu^T\Wmat^{-1}\mu 
   \le \frac6{\delmin(\theh)}\sum_{k=1}^{K-1} \mu_k^2 
   \le \bigg(\frac{6C_1^2\kappa^2}{\delmin(\theh)}\sum_{k=1}^{K-1}(\delta_{k+1}+\delta_k)^2\bigg)\delmax(\xvec)^2 
   \le 24C_1^2\kappa^2\alpha(\theh)M\delmax(\xvec)^2, 
 \end{align*}
 proving our claim \eqref{eq:important}.
\end{proof}
\begin{lem}
 With $\kappa$ satisfying \eqref{eq:kappa},
 the following estimate holds:
 \begin{align}
   \label{eq:fi}
   \frac1\tau\big(\auxil_\theh(\xvec_\Delta^n)-\auxil_\theh(\xvec_\Delta^{n-1})\big)
   &\le - \grd\nrj_\theh(\xvec_\Delta^n)^T\vec{\rho_x}(\xvec_\Delta^n) \\
   \label{eq:firem1}
   &\quad + \frac\kappa{2\tau}(\xvec_\Delta^n-\xvec_\Delta^{n-1})^T\Wmat(\xvec_\Delta^n-\xvec_\Delta^{n-1}) \\
   \label{eq:firem2}
   &\quad + C(M\alpha(\theh))^{1/2}\kappa\big(\grd\nrj_\theh(\xvec_\Delta^n)^T\Wmat^{-1}\grd\nrj_\theh(\xvec_\Delta^n)\big)^{1/2}\delmax(\xvec_\Delta^n).
 \end{align}
\end{lem}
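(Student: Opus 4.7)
The plan is to Taylor-expand $\auxil_\theh$ around $\xvec_\Delta^n$, then use the Euler--Lagrange equation \eqref{eq:el} to convert the first-order term into an inner product between the gradient $\grd\nrj_\theh(\xvec_\Delta^n)$ and the gradient vector field $\velo$ defined in \eqref{eq:velo}, and finally split $\velo = \vec{\rho_x} + \nu$ so that the residual $\nu$ can be controlled by Lemma~\ref{lem:velo}.

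\textbf{Step 1 (bounding the Hessian of $\auxil_\theh$).} Differentiating $\auxil_\theh(\xvec)=\int_0^M\rho(\convX_\theh[\xvec])\dd\xi$ twice gives
\begin{align*}
 \big[\hess\auxil_\theh(\xvec)\big]_{m,k} = \int_0^M \rho_{xx}(\convX_\theh[\xvec])\,\hatf_m\hatf_k\dd\xi.
\end{align*}
Since $|\rho_{xx}|\le\kappa$ on $I$ and $\Wmat_{m,k} = \int_0^M\hatf_m\hatf_k\dd\xi$ by Lemma~\ref{lem:wasserstein}, it follows that for every $v\in\setR^{K-1}$,
\begin{align*}
 \big|v^T\hess\auxil_\theh(\xvec)v\big| = \bigg|\int_0^M \rho_{xx}(\convX_\theh[\xvec])\Big(\sum_k v_k\hatf_k\Big)^2\dd\xi\bigg| \le \kappa\, v^T\Wmat v.
\end{align*}

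\textbf{Step 2 (Taylor expansion).} Since $\xseq$ is convex, the segment between $\xvec_\Delta^{n-1}$ and $\xvec_\Delta^n$ lies in $\xseq$. Expanding $\auxil_\theh(\xvec_\Delta^{n-1})$ around $\xvec_\Delta^n$ with integral remainder and using Step~1,
\begin{align*}
 \auxil_\theh(\xvec_\Delta^n)-\auxil_\theh(\xvec_\Delta^{n-1}) \le \grd\auxil_\theh(\xvec_\Delta^n)^T(\xvec_\Delta^n-\xvec_\Delta^{n-1}) + \tfrac{\kappa}{2}(\xvec_\Delta^n-\xvec_\Delta^{n-1})^T\Wmat(\xvec_\Delta^n-\xvec_\Delta^{n-1}).
\end{align*}
Dividing by $\tau$ already produces the second summand \eqref{eq:firem1}.

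\textbf{Step 3 (using the Euler--Lagrange equation and the residual estimate).} By \eqref{eq:velo}, $\grd\auxil_\theh(\xvec_\Delta^n) = \Wmat\velo(\xvec_\Delta^n)$, and by \eqref{eq:el}, $\Wmat(\xvec_\Delta^n-\xvec_\Delta^{n-1}) = -\tau\grd\nrj_\theh(\xvec_\Delta^n)$, so
\begin{align*}
 \frac{1}{\tau}\grd\auxil_\theh(\xvec_\Delta^n)^T(\xvec_\Delta^n-\xvec_\Delta^{n-1}) = -\velo(\xvec_\Delta^n)^T\grd\nrj_\theh(\xvec_\Delta^n) = -\vec{\rho_x}(\xvec_\Delta^n)^T\grd\nrj_\theh(\xvec_\Delta^n) - \nu^T\grd\nrj_\theh(\xvec_\Delta^n),
\end{align*}
where $\nu=\velo(\xvec_\Delta^n)-\vec{\rho_x}(\xvec_\Delta^n)$. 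The first piece gives the leading term in \eqref{eq:fi}. For the second piece, I apply Cauchy--Schwarz in the inner product induced by $\Wmat$ (which is positive definite by Lemma~\ref{lem:wasserstein}),
\begin{align*}
 \big|\nu^T\grd\nrj_\theh(\xvec_\Delta^n)\big| \le \big(\nu^T\Wmat\nu\big)^{1/2}\big(\grd\nrj_\theh(\xvec_\Delta^n)^T\Wmat^{-1}\grd\nrj_\theh(\xvec_\Delta^n)\big)^{1/2},
\end{align*}
and bound the first factor by Lemma~\ref{lem:velo}, producing the $C(M\alpha(\theh))^{1/2}\kappa\,\delmax(\xvec_\Delta^n)$ prefactor in \eqref{eq:firem2}. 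Combining the three contributions yields the claim.

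There is no serious obstacle: the only point that requires some care is recognizing that $\hess\auxil_\theh$ is dominated by $\kappa\Wmat$ in the spectral sense, which is immediate once one notices that the $L^2$-Gram matrix of the hat functions $\hatf_k$ is exactly $\Wmat$. The rest is a mechanical combination of the Euler--Lagrange equations, the splitting $\velo=\vec{\rho_x}+\nu$, and the $\Wmat$-Cauchy--Schwarz inequality; the estimate in Lemma~\ref{lem:velo} is precisely tailored so that the residual shows up in the desired form involving $\delmax(\xvec_\Delta^n)$ and the dissipation quantity $\grd\nrj_\theh^T\Wmat^{-1}\grd\nrj_\theh$ that is summable in $n$ by \eqref{eq:ee}.
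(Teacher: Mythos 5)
Your proposal is correct and follows essentially the same route as the paper: Taylor expand with an error controlled by $\kappa$, substitute the Euler--Lagrange relation $\Wmat(\xvec_\Delta^n-\xvec_\Delta^{n-1})=-\tau\grd\nrj_\theh(\xvec_\Delta^n)$, split $\velo=\vec{\rho_x}+\nu$, and apply $\Wmat$-Cauchy--Schwarz together with Lemma~\ref{lem:velo}. The only difference is cosmetic: the paper Taylor-expands the integrand $\rho$ pointwise and uses $\int_0^M(\theX'-\theX)^2\dd\xi=(\xvec'-\xvec)^T\Wmat(\xvec'-\xvec)$, whereas you expand the finite-dimensional functional $\auxil_\theh$ and bound $\hess\auxil_\theh$ by $\kappa\Wmat$ via the Gram-matrix identity, which amounts to the same estimate (and in fact sidesteps a pair of compensating sign slips in the paper's intermediate displays).
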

\begin{proof}
 A Taylor expansion of $\rho$ yields for arbitrary $\theX,\theX'\in\xspc$:
 \begin{align*}
   \auxil(\theX')\ge\auxil(\theX) + \int_0^M \rho_x(\theX)\cdot(\theX'-\theX)\dd\xi - \frac\kappa2\int_0^M (\theX'-\theX)^2\dd\xi.
 \end{align*}
 With $\theX=\convX_\theh[\xvec_\Delta^n]$ and $\theX'=\convX_\theh[\xvec_\Delta^{n-1}]$, 
 we obtain
 \begin{align*}
   \frac1\tau\big(\auxil_\theh(\xvec_\Delta^n)-\auxil_\theh(\xvec_\Delta^{n-1})\big)
   & \le -\sum_{m=1}^{K-1}\bigg(\frac1\tau\big[\xvec_\Delta^n-\xvec_\Delta^{n-1}\big]_m\int_0^M\rho_x\big(\convX_\theh[\xvec_\Delta^n]\big)\hatf_m\dd\xi\bigg)\\
   & \qquad + \frac\kappa{2\tau}(\xvec_\Delta^n-\xvec_\Delta^{n-1})^T\Wmat(\xvec_\Delta^n-\xvec_\Delta^{n-1}).
 \end{align*}
 Using the definition \eqref{eq:velo} of $\velo$, the symmetry $\Wmat^T=\Wmat$, 
 and the Euler-Lagrange equations \eqref{eq:el}, 
 the sum can be rewritten as
 \begin{align*}
   \sum_{m=1}^{K-1} \frac1\tau\big[\xvec_\Delta^n-\xvec_\Delta^{n-1}\big]_m\big[\grd\auxil_\theh(\xvec_\Delta^n)\big]_m
   = \frac1\tau(\xvec_\Delta^n-\xvec_\Delta^{n-1})^T\Wmat\velo(\xvec_\Delta^n)
   = \grd\nrj_\theh(\xvec_\Delta^n)^T\velo(\xvec_\Delta^n).
 \end{align*}
 With the notations introduced in Lemma \ref{lem:velo}, we obtain further
 \begin{align*}
   \grd\nrj_\theh(\xvec_\Delta^n)^T\velo(\xvec_\Delta^n)
   & = \grd\nrj_\theh(\xvec_\Delta^n)^T\vec{\rho_x}(\xvec_\Delta^n) + \grd\nrj_\theh(\xvec_\Delta^n)^T\nu_\Delta^n \\
   & \le \grd\nrj_\theh(\xvec_\Delta^n)^T\vec{\rho_x}(\xvec_\Delta^n) 
   + \big(\grd\nrj_\theh(\xvec_\Delta^n)^T\Wmat^{-1}\grd\nrj_\theh(\xvec_\Delta^n)\big)^{1/2}\big((\nu_\Delta^n)^T\Wmat\nu_\Delta^n\big)^{1/2},  
 \end{align*}
 where the Cauchy-Schwarz inequality has been applied in the last step.
 The claim \eqref{eq:fi} now follows directly from the estimate \eqref{eq:important}.
\end{proof}
\begin{lem}
 For every $\xvec\in\xseq_\theh$,
 \begin{align}
   \label{eq:auxilrep}
   \auxil_\theh(\xvec) &= \int_I \rho(x)\convf_\theh[\xvec](x)\dd x, \\
   \label{eq:weakform}
   \grd\nrj_\theh(\xvec)^T\vec{\rho_x}(\xvec)
   &= - \int_I \prss\big(\convf_\theh[\xvec](x)\big)\rho_{xx}(\hat x)\dd x + \int_I V_x(x)\rho_x(\check x)\convf_\theh[\xvec](x)\dd x,
 \end{align}
 where $\hat x,\check x\in I$ are $x$-dependent quantities satisfying
 \begin{align*}
   |\hat x-x|,\,|\check x-x|<\delmax(\xvec).
 \end{align*}
\end{lem}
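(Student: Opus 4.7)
The plan is to verify both identities by direct computation; the proofs are parallel in spirit but \eqref{eq:weakform} is the real substance.

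Formula \eqref{eq:auxilrep} follows from the change of variables $x = \convX_\theh[\xvec](\xi)$ applied separately on each sub-interval $[\xi_{k-1},\xi_k]$, where $\convX_\theh[\xvec]$ is affine with constant slope $(x_k-x_{k-1})/\delta_k = 1/u_k$. Then $\dd\xi = u_k\,\dd x$ on the image $(x_{k-1},x_k)$, and summing over $k$ reassembles the right-hand side of \eqref{eq:auxilrep}.

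For \eqref{eq:weakform}, I would insert the explicit gradient \eqref{eq:gradient} into the product $\grd\nrj_\theh(\xvec)^T\vec{\rho_x}(\xvec)$ and split into a $\psi'$-part and a $V_x$-part. For the $\psi'$-part, Abel summation of $\sum_{m=1}^{K-1}\rho_x(x_m)\big[\psi'(z_m)-\psi'(z_{m+1})\big]$, using the boundary values $\rho_x(x_0)=\rho_x(x_K)=0$ inherited from $\rho_x(a)=\rho_x(b)=0$, rearranges the sum as $\sum_{k=1}^K[\rho_x(x_k)-\rho_x(x_{k-1})]\,\psi'(z_k)$. By Lemma \ref{lem:phipsi}, $\psi'(z_k)=\phi(0)-\prss(u_k)$, so the constant $\phi(0)$-contribution telescopes to zero. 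A mean-value expansion $\rho_x(x_k)-\rho_x(x_{k-1})=\rho_{xx}(\hat x_k)(x_k-x_{k-1})$ with $\hat x_k\in(x_{k-1},x_k)$ then converts the remainder into $-\int_I \prss(\convf_\theh[\xvec](x))\,\rho_{xx}(\hat x)\,\dd x$, with $|\hat x-x|<\delmax(\xvec)$ by construction.

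For the $V_x$-part, the same change of variables as in \eqref{eq:auxilrep} recasts $\int_0^M V_x(\convX_\theh[\xvec])\hatf_m\,\dd\xi$ as $\int_I V_x(x)\,\tilde\hatf_m(x)\,\convf_\theh[\xvec](x)\,\dd x$, where $\tilde\hatf_m := \hatf_m\circ\convX_\theh^{-1}$ is a hat function in the physical coordinate. On each $(x_{k-1},x_k)$ only $\tilde\hatf_{k-1}$ and $\tilde\hatf_k$ are non-zero, and $\tilde\hatf_{k-1}+\tilde\hatf_k\equiv 1$; once we weight by $\rho_x(x_m)$ (using $\rho_x(x_0)=\rho_x(x_K)=0$ to supply the missing $m=0,K$ contributions), the sum $\sum_m \rho_x(x_m)\tilde\hatf_m(x)$ becomes exactly the continuous piecewise-linear interpolant of $\rho_x$ between the nodes $x_0,\dots,x_K$. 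Since on $[x_{k-1},x_k]$ this interpolant takes values between $\rho_x(x_{k-1})$ and $\rho_x(x_k)$, the intermediate value theorem applied to the continuous function $\rho_x$ produces $\check x\in[x_{k-1},x_k]$ at which $\rho_x(\check x)$ equals the interpolant, whence $|\check x-x|<\delmax(\xvec)$.

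The main obstacle is not any deep estimate but careful boundary accounting: Abel summation would otherwise produce stray terms $\rho_x(x_0)\psi'(z_1)$ and $\rho_x(x_K)\psi'(z_K)$, and the partition-of-unity argument for the physical hat functions naively needs the extreme hats $\tilde\hatf_0,\tilde\hatf_K$; in both places the hypothesis $\rho_x(a)=\rho_x(b)=0$ simultaneously inserts precisely the missing zero contributions needed to close the identities.
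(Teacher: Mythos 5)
Your proof is correct and follows essentially the same route as the paper's: change of variables for \eqref{eq:auxilrep}, then splitting $\grd\nrj_\theh(\xvec)^T\vec{\rho_x}(\xvec)$ into the $\psi'$-part (handled by summation by parts, the identity $\psi'(1/r)=\phi(0)-\prss(r)$, and a mean-value expansion of $\rho_x$) and the $V_x$-part (change of variables plus the intermediate value theorem). Your explicit handling of the boundary terms $\rho_x(x_0)=\rho_x(x_K)=0$ and of the telescoping constant $\phi(0)$ is a slightly more careful rendering of what the paper leaves implicit.
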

\begin{proof}
 Relation \eqref{eq:auxilrep} is a direct consequence of
 \begin{align*}
   \int_{\xi_{k-1}}^{\xi_k} \rho\big(x_k\hatf_k+x_{k-1}\hatf_{k-1}\big)\dd\xi
   = \int_{x_{k-1}}^{x_k} \rho(x)\frac{\delta_k}{x_k-x_{k-1}}\dd x,
 \end{align*}
 which follows by a change of variables $x=x_k\hatf_k(\xi)+x_{k-1}\hatf_{k-1}(\xi)$.
 To prove \eqref{eq:weakform}, first observe that \eqref{eq:gradient} implies
 \begin{align*}
   \grd\nrj_\theh(\xvec)^T\vec{\rho_x}(\xvec)
   & = -\sum_{k=1}^{K-1} \bigg[\psi'\Big(\frac{x_{k+1}-x_k}{\delta_{k+1}}\Big) - \psi'\Big(\frac{x_k-x_{k-1}}{\delta_k}\Big)\bigg]\rho_x(x_k) \\
   & \qquad + \int_0^M V_x\big(\convX_\theh[\xvec]\big)\sum_{k=1}^{K-1}\rho_x(x_k)\hatf_k\dd\xi. 
 \end{align*}
 We consider both terms on the right hand side separately.
 For the first, we obtain, using that $\psi'(1/r)=-P(r)$ and that $\rho_x(x_0)=\rho_x(x_K)$,
 \begin{align*}
   -\sum_{k=1}^{K-1} \bigg[\psi'\Big(\frac{x_{k+1}-x_k}{\delta_{k+1}}\Big) - \psi'\Big(\frac{x_k-x_{k-1}}{\delta_k}\Big)\bigg]\rho_x(x_k)
   &= \sum_{k=1}^K \psi'\Big(\frac{x_k-x_{k-1}}{\delta_k}\Big)\big(\rho_x(x_k)-\rho_x(x_{k-1})\big) \\
   & = - \sum_{k=1}^K \int_{x_{k-1}}^{x_k} \prss\Big(\frac {\delta_k}{x_k-x_{k-1}}\Big)\rho_{xx}(\hat x_k)\dd x,
 \end{align*}
 with a suitable $\hat x_k\in(x_{k-1},x_k)$ by the intermediate value theorem.
 For the other term, we perform a change of variables:
 \begin{align*}
   \int_{\xi_{k-1}}^{\xi_k} V_x\big(x_k\hatf_k+x_{k-1}\hatf_{k-1}\big)\big(\rho_x(x_k)\hatf_k+\rho_x(x_{k-1})\hatf_{k-1}\big)\dd\xi
   = \int_{x_{k-1}}^{x_k} V_x(x)\rho_x(\check x)\frac{\delta_k}{x_k-x_{k-1}}\dd x,
 \end{align*}
 with some $x$-dependent intermediate value $\check x\in(x_{k-1},x_k)$.
 Summation over $k=1,\ldots,K$ provides \eqref{eq:weakform}.
\end{proof}
\begin{lem}
 Let $\vartheta\in C^\infty_c(0,T)$ be a non-negative test function of compact support in $(0,T)$.
 Then 
 \begin{align}
   \label{eq:preweak}
   \int_0^T \int_I \vartheta'(t) \rho(x)u_*(t;x)\dd x \dd t
   \le \int_0^T \int_I \vartheta(t) \big[ - P(u_*)\rho_{xx} + V_x\rho_xu\big]\dd x\dd t.
 \end{align}
\end{lem}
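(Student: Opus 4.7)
The plan is to pass to the limit in a discrete analogue of the weak formulation, extracted from the one-step inequality \eqref{eq:fi}. Since \eqref{eq:fi} is valid for any test function obeying the bound \eqref{eq:kappa}, it applies just as well to $-\rho$ in place of $\rho$; substituting accordingly and using the identities $\auxil_\theh^{-\rho} = -\auxil_\theh^\rho$ and $\vec{(-\rho)_x} = -\vec{\rho_x}$ produces
\begin{align*}
-\frac1{\tau_j}\bigl(\auxil_\theh(\xvec_\Delta^n) - \auxil_\theh(\xvec_\Delta^{n-1})\bigr)
\le \grd\nrj_\theh(\xvec_\Delta^n)^T \vec{\rho_x}(\xvec_\Delta^n) + R_n,
\end{align*}
with $R_n\ge 0$ collecting the remainders from \eqref{eq:firem1} and \eqref{eq:firem2}. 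I multiply this by the non-negative weight $\tau_j\vartheta(n\tau_j)$, sum over $n=1,\dots,N_j$, and pass to the limit $j\to\infty$.

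For the left-hand side, summation by parts -- whose boundary contributions vanish for all sufficiently large $j$ since $\vartheta$ has compact support inside $(0,T)$ -- rewrites $-\sum_n \vartheta(n\tau_j)\bigl(\auxil_\theh(\xvec^n)-\auxil_\theh(\xvec^{n-1})\bigr)$ as a Riemann sum approximating $\int_0^T \vartheta'(t)\int_I \rho(x) u_{\Delta_j}(t;x)\dd x\dd t$; here \eqref{eq:auxilrep} is used to identify $\auxil_\theh(\xvec_{\Delta_j}^n)$ with $\int_I \rho\,u_{\Delta_j}^n\dd x$. The strong $L^1$-convergence $\bar u_{\Delta_j}\to u_*$ from Proposition \ref{prp:compact} yields convergence to $\int_0^T\vartheta'(t)\int_I \rho u_*\dd x\dd t$. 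For the leading right-hand side term I appeal to the identity \eqref{eq:weakform}, which expresses $\grd\nrj_\theh(\xvec_\Delta^n)^T\vec{\rho_x}(\xvec_\Delta^n)$ as integrals of $\prss(u_{\Delta_j}^n)\rho_{xx}(\hat x)$ and $V_x(x)\rho_x(\check x)u_{\Delta_j}^n$ over $I$, with the auxiliary points satisfying $|\hat x-x|,\,|\check x-x|\le \delmax(\xvec_{\Delta_j}^n)$. Combining the uniform mesh decay \eqref{eq:xuniform} with the uniform continuity of $\rho_x,\rho_{xx}$ and the $L^1$-convergences of both $\bar u_{\Delta_j}$ and $\prss(\bar u_{\Delta_j})$ supplied by Proposition \ref{prp:compact} lets me pass to the limit: this Riemann sum converges to $\int_0^T\vartheta(t)\int_I[-\prss(u_*)\rho_{xx}+V_x\rho_x u_*]\dd x\dd t$.

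It remains to dispatch the weighted remainder $\tau_j\sum_n \vartheta(n\tau_j)R_n$. The contribution from \eqref{eq:firem1} simplifies to $\tfrac\kappa2\sum_n \vartheta(n\tau_j)\wass(u_{\Delta_j}^n,u_{\Delta_j}^{n-1})^2$, which the energy inequality \eqref{eq:we} dominates by $\kappa\|\vartheta\|_\infty\tau_j\bigl[\overline\anrj - \nrjbelow\bigr]\to 0$. The contribution from \eqref{eq:firem2} is handled by a Cauchy--Schwarz step in $n$, the a priori bound \eqref{eq:ee}, and $\max_n\delmax(\xvec_{\Delta_j}^n)\to 0$ via \eqref{eq:xuniform}, and likewise vanishes. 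Assembling these three limits yields \eqref{eq:preweak}. The principal technical hurdle is the control of the spatially shifted evaluation points $\hat x,\check x$ in \eqref{eq:weakform}, and this is precisely why the minimum principle of Proposition \ref{prp:minprinc} -- which delivers the uniform mesh decay \eqref{eq:xuniform} -- is indispensable.
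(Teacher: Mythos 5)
Your proposal is correct and follows the same route as the paper's own proof: weight the one-step estimate \eqref{eq:fi}--\eqref{eq:firem2} with the non-negative factor $\tau\vartheta(n\tau)$, sum in $n$, perform summation by parts and use \eqref{eq:auxilrep} for the left-hand side, rewrite the leading right-hand term via \eqref{eq:weakform}, and dispose of the remainders from \eqref{eq:firem1} and \eqref{eq:firem2} using \eqref{eq:we}, \eqref{eq:ee} and the uniform mesh decay \eqref{eq:xuniform}. Your opening move of applying \eqref{eq:fi} to $-\rho$ rather than $\rho$ is a careful touch that makes the inequality come out in the direction $\le$ of the statement; the paper's written proof tracks the sign of $-\grd\nrj_\theh^T\vec{\rho_x}$ a bit loosely, which is ultimately harmless since the lemma is later upgraded to an equality by the very substitution $\rho\mapsto-\rho$ that you used.
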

\begin{proof}
 Multiply inequality \eqref{eq:fi}--\eqref{eq:firem2} by $\tau\vartheta(n\tau)\ge0$, and sum over $n=1,\ldots,N_\tau$.
 On the left-hand side, it follows by means of \eqref{eq:auxilrep} that
 \begin{align*}
   \tau\sum_{n=1}^{N_\tau} \vartheta(n\tau)\frac{\auxil_\theh(\xvec_\Delta^n)-\auxil_\theh(\xvec_\Delta^{n-1})}\tau
   &= - \tau \sum_{n=0}^{N_\tau-1} \frac{\vartheta((n+1)\tau)-\vartheta(n\tau)}\tau \auxil_\theh(\xvec_\Delta^n) \\
   &= - \int_0^T \int_I \widehat{\vartheta'_\tau}(t) \rho(x)\bar u_\Delta(t;x)\dd x\dd t,
 \end{align*}
 where the sequence of piecewise constant functions $\widehat{\vartheta'_\tau}$ converge to $\vartheta'$ uniformly on $[0,T]$.
 The strong convergence of $\bar u_\Delta$ to $u_*$ in $L^1$ is sufficient to pass to the limit $j\to\infty$.

 On the right-hand side, the first term can be rewritten using \eqref{eq:weakform}:
 \begin{align*}
   \tau\sum_{n=1}^{N_\tau} \vartheta(n\tau)\big(\grd\nrj_\theh(\xvec_\Delta^n)^T\vec{\rho_x}(\xvec_\Delta^n)\big)
   = \int_0^T \bar\vartheta_\tau(t) \int_I \big[-P(\bar u_\Delta)\rho_{xx}(\hat x) + V_x(x)\rho_x(\check x)\bar u_\Delta\big] \dd x\dd t,
 \end{align*}
 where, for $(n-1)\tau<t\le n\tau$, we know that $|x-\hat x|,|x-\check x|<\delmax(\xvec_\Delta^n)$.
 The convergence \eqref{eq:xuniform} implies that $\rho_{xx}(\hat x)$ and $\rho_x(\check x)$
 converge to their respective limits $\rho_{xx}(x)$ and $\rho_x(x)$ uniformly in $x\in I$.
 Likewise, the piecewise constant interpolants $\bar\vartheta_\tau$ converge to $\vartheta$ uniformly on $[0,T]$.
 The strong convergence of $\bar u_\Delta$ and of $P(\bar u_\Delta)$ to their respective limits $u_*$ and $P(u_*)$ in $L^1([0,T]\times I)$
 thus suffices to pass to the limit with the integral.

 We still need to estimate the remainder terms, resulting from \eqref{eq:firem1}\&\eqref{eq:firem2}.
 Observe that
 \begin{align*}
   \tau\sum_{n=1}^{N_\tau} \frac1{2\tau}(\xvec_\Delta^n-\xvec_\Delta^{n-1})^T\Wmat (\xvec_\Delta^n-\xvec_\Delta^{n-1})
   = \tau\,\frac1{2\tau} \sum_{n=1}^N \wass\big(\convf_\theh[\xvec_\Delta^n],\convf_\theh[\xvec_\Delta^{n-1}]\big)^2
   \le \tau\big(\overline\anrj-\underline\anrj)
 \end{align*}
 by \eqref{eq:we} and condition \eqref{eq:uniform}.
 Recalling that $\tau$ depends on $j$, with $\tau_j\downarrow 0$, this expression vanishes in the limit.
 Similarly, by \eqref{eq:ee} and \eqref{eq:uniform},
 \begin{align*}
   \tau\sum_{n=1}^{N_\tau}\big(\grd\nrj_\theh(\xvec_\Delta^n)^T\Wmat^{-1}\grd\nrj_\theh(\xvec_\Delta^n)\big)^{1/2}
   &\le (2T)^{1/2}\bigg(\frac\tau2\sum_{n=1}^{N_\tau}\grd\nrj_\theh(\xvec_\Delta^n)^T\Wmat^{-1}\grd\nrj_\theh(\xvec_\Delta^n)\bigg)^{1/2} \\
   &\le (2T)^{1/2}\big(\overline\anrj-\underline\anrj)^{1/2},
 \end{align*}
 which is $j$-uniformly bounded.
 Hence, the remainder term resulting from \eqref{eq:firem2} vanishes in the limit $j\to\infty$
 because of \eqref{eq:xuniform} and of \eqref{eq:uniform}.
\end{proof}
\begin{proof}[Proof of Proposition \ref{prp:weak}]
 First, observe that inequality \eqref{eq:preweak} is actually an equality, since $\rho$ can be replaced by $-\rho$ everywhere.
 Next, recall that functions $\testf$ of the type $\testf(t,x)=\vartheta(t)\rho(x)$, 
 where $\vartheta\in C^\infty_c(0,T)$ is non-negative, and $\rho\in C^\infty(I)$ satisfies $\rho_x(a)=\rho_x(b)=0$,
 are dense in the set $\tst$ of test functions.
 Thus, the weak formulation \eqref{eq:weak} holds for all $\testf\in\tst$.

 It remains to prove that the solution attains the initial datum $u^0$ weakly-$\star$ as $t\downarrow0$. 
 However, this is a trivial consequence of the H\"older regularity of $u_*\in C^{1/2}([0,T];\wass)$, 
 of the uniform convergence of $(\bar u_{\Delta_j})_{j\in\setN}$ to $u_*$ w.r.t.\ $t\in[0,T]$ in $\wass$,
 and of the approximation of $u^0$ by $u_{\Delta_j}^0$.
\end{proof}

\section{Numerical results and proof of consistency}
\label{sct:numerics}

\subsection{Implementation}

\subsubsection{Choice of the initial condition}
The numerical scheme is phrased in Lagrangian coordinates:
the discretization $\theh=(\xi_0,\xi_1,\ldots,\xi_K)$ of the reference domain $[0,M]$ is fixed, 
whereas the corresponding grid points $\xvec^n=(x^n_1,\ldots,x^n_{K-1})\in\xseq$ on the interval $I$ evolve in (discrete) time. 
In the numerical experiments that follows,
our choice for the discretization of the initial condition is to use an equidistant grid $\xvec^0$ with $K$ vertices on $I$,
\begin{align*}
 x^0_k = a+k(b-a)/K,
\end{align*}
and an accordingly adapted mesh $\theh$ on $[0,M]$, with
\begin{align*}
 \xi_k = U^0(x^0_k),\quad \text{where} \quad
 U^0(x)=\int_a^xu^0(y) \dd y \quad\text{for all $x\in I$}
\end{align*}
is the initial datum's distribution function.
This discretization has the property that
\begin{align*}
 \int_{x^0_{k-1}}^{x^0_k} u^0(x)\dd x = \int_{x^0_{k-1}}^{x^0_k} u_\Delta^0(x)\dd x
 \quad \text{for all $k=1,\ldots,K$}.
\end{align*}

\subsubsection{Time stepping}
Each (time) step in the numerical scheme consists of solving the system \eqref{eq:el} of Euler-Lagrange equations.
In practice, this is done with a damped Newton method, 
which guarantees that the constraint $\xvec_\Delta^n\in\xseq$ --- i.e., that $a<x^n_1<\cdots<x^n_{K-1}<b$ ---
is propagated from the $n-1$st to the $n$th iterate.
To be more precise, recall that
\begin{align*}
 \grd\nrj_{\theh,\tau}(\xvec) = \frac1\tau \Wmat(\xvec-\xvec_\Delta^{n-1}) + \grd\nrj_\theh(\xvec)
\end{align*}
is the functional whose unique root in $\xseq$ defines the $n$th time iterate $\xvec_\Delta^n$,
and that
\begin{align*}
 \hess\nrj_{\theh,\tau}(\xvec) = \frac1\tau\Wmat + \hess\nrj_\theh(\xvec)
\end{align*}
is its Jacobian.
Given $\xvec_\Delta^{n-1}$, we calculate $\xvec_\Delta^n$ by means of the following algorithm:
\begin{align*}
 & \xvec :=\xvec_\Delta^{n-1}; \\
 & \mathsf{repeat} \\
 & \qquad \dd\xvec := - \big(\hess\nrj_{\theh,\tau}(\xvec)\big)^{-1}\grd\nrj_{\theh,\tau}(\xvec);\\
 & \qquad \mathsf{while}\;\xvec + \dd\xvec \notin \xseq 
 \qquad \dd\xvec: = 0.5\dd\xvec; 
 \qquad \mathsf{end}; \\
 & \qquad \xvec := \xvec + \dd\xvec; \\
 & \mathsf{until}\;
 \|\dd\xvec\|_{l^1} < \mathsf{tol} \quad\text{and}\quad \|\grd\nrj_{\theh,\tau}(\xvec)\|_{l^1} < \mathsf{tol}; \\
 & \xvec_{\Delta}^{n} := \xvec.
\end{align*}
In our experiments, we use $\mathsf{tol} = 10^{-8}$. 
For the evaluation of $\grd\nrj_{\theh,\tau}(\xvec)$ above, an explicit expression for the integrals
\begin{align*}
 \partial_{x_m} \bigg(\int_0^M V\circ\convX_\theh[\xvec]\dd\xi\bigg) = \int_0^M V_x\circ\convX_\theh[\xvec]\hatf_m(\xi)\dd \xi
\end{align*}
is needed, see \eqref{eq:gradient}.
Denoting by $\mathfrak{V}$ an anti-derivative of $V$,
one finds
\begin{align*}
 \int_{\xi_{m-1}}^{\xi_m} V_x\circ\convX_\theh[\xvec]\hatf_m(\xi)\dd \xi
 &= \frac{\delta_m}{x_m-x_{m-1}}\int_{x_{m-1}}^{x_m} V_x(x)\frac{x-x_{m-1}}{x_m-x_{m-1}}\dd x \\
 &= \frac{\delta_m}{x_m-x_{m-1}} \bigg(V(x_m) - \frac{\mathfrak{V}(x_m)-\mathfrak{V}(x_{m-1})}{x_m-x_{m-1}}\bigg),
\end{align*}
and analogously for the integral from $\xi_m$ to $\xi_{m+1}$.
In combination, we obtain
\begin{align*}
 \partial_{x_k}\int_0^M V\circ\convX_\theh[\xvec](\xi)\dd\xi
 &=\frac{-\delta_k}{(x_k-x_{k-1})^2}\big(\mathfrak{V}(x_k)-\mathfrak{V}(x_{k-1})\big)
 +\frac{\delta_{k-1}}{(x_{k+1}-x_k)^2}\big(\mathfrak{V}(x_{k+1})-\mathfrak{V}(x_k)\big) \\
 &\quad+V(x_k)\left(\frac{\delta_k}{x_k-x_{k-1}}-\frac{\delta_{k+1}}{x_{k+1}-x_k}\right).
\end{align*}
A similar expression is available for the respective contribution to the Hessian $\hess\nrj_{\theh,\tau}$:
\begin{align*}
 \begin{split}
   &\partial_{x_m}\partial_{x_k}\bigg(\int_0^M V\circ\convX_\theh[\xvec]\dd\xi\bigg) \\
   &=
   \begin{cases}
     \frac{2\delta_k}{(x_k-x_{k-1})^3}\big(\mathfrak{V}(x_k)-\mathfrak{V}(x_{k-1})\big)
     +\frac{2\delta_{k+1}}{(x_{k+1}-x_{k})^3}\big(\mathfrak{V}(x_{k+1})-\mathfrak{V}(x_{k})\big)\\
     -2V(x_k)\left(\frac{\delta_k}{(x_k-x_{k-1})^2}+\frac{\delta_{k+1}}{(x_{k+1}-x_{k})^2}\right)
     -V_x\left(\frac{\delta_k}{(x_k-x_{k-1})^2}-\frac{\delta_{+1}k}{(x_{k+1}-x_{k})^2}\right),
     & m=k\\
     \smallskip
     -\frac{2\delta_k}{(x_k-x_{k-1})^3}\big(\mathfrak{V}(x_k)-\mathfrak{V}(x_{k-1})\big)
     +\frac{\delta_k}{(x_k-x_{k-1})^2}\left(V(x_k)+V(x_{k-1})\right),
     & m = k-1 \\
     \smallskip
     0, & \text{otherwise}.
   \end{cases}
 \end{split}
\end{align*}

\subsection{Numerical experiments and convergence}
The following numerical experiments are performed for the porous medium equation with quadratic nonlinearity,
\begin{align*}
 \partial_tu = (u^2)_{xx} + (V_xu)_x,
\end{align*}
on the interval $I=[-1,1]$.
For the potential $V$, we choose
\begin{align*}
 V(x)=-\frac1\pi\cos(\pi x),
\end{align*}
and as initial datum, we take the following function of unit mass $M=1$:
\begin{align}
 \label{eq:id}
 u^0(x) = C\big(-\cos(2\pi x)+1.5\big)\big((x+0.5)^4+1\big)\quad \text{with}\quad C=\frac{240-280\pi^2+423\pi^4}{80\pi^4}.
\end{align}

\subsubsection{Reference Solution}
\label{exp0}
Our numerical reference resolution is calculated with $K=5000$ spatial grid points and a time step size $\tau=10^{-2}$.
Figure \ref{fig:fig1}/left shows snapshots of the reference solution's spatial density after the first couple of time steps.
One observes the typical behaviour for nonlinear drift diffusion equations: 
on a very short time scale, diffusion reduces the extrema of the initial mass distribution;
subsequently, the drift dominates and transports the mass towards the equilibrium (dotted line) on a longer time scale.
Figure \ref{fig:fig1}/right displays the corresponding particle trajectories in the Lagrangian picture, 
i.e., how the points $x_k^n$ move with (discrete) time $n$ for fixed $k$.
\begin{figure}
 \centering
 \subfigure{\includegraphics[width=0.48\textwidth]{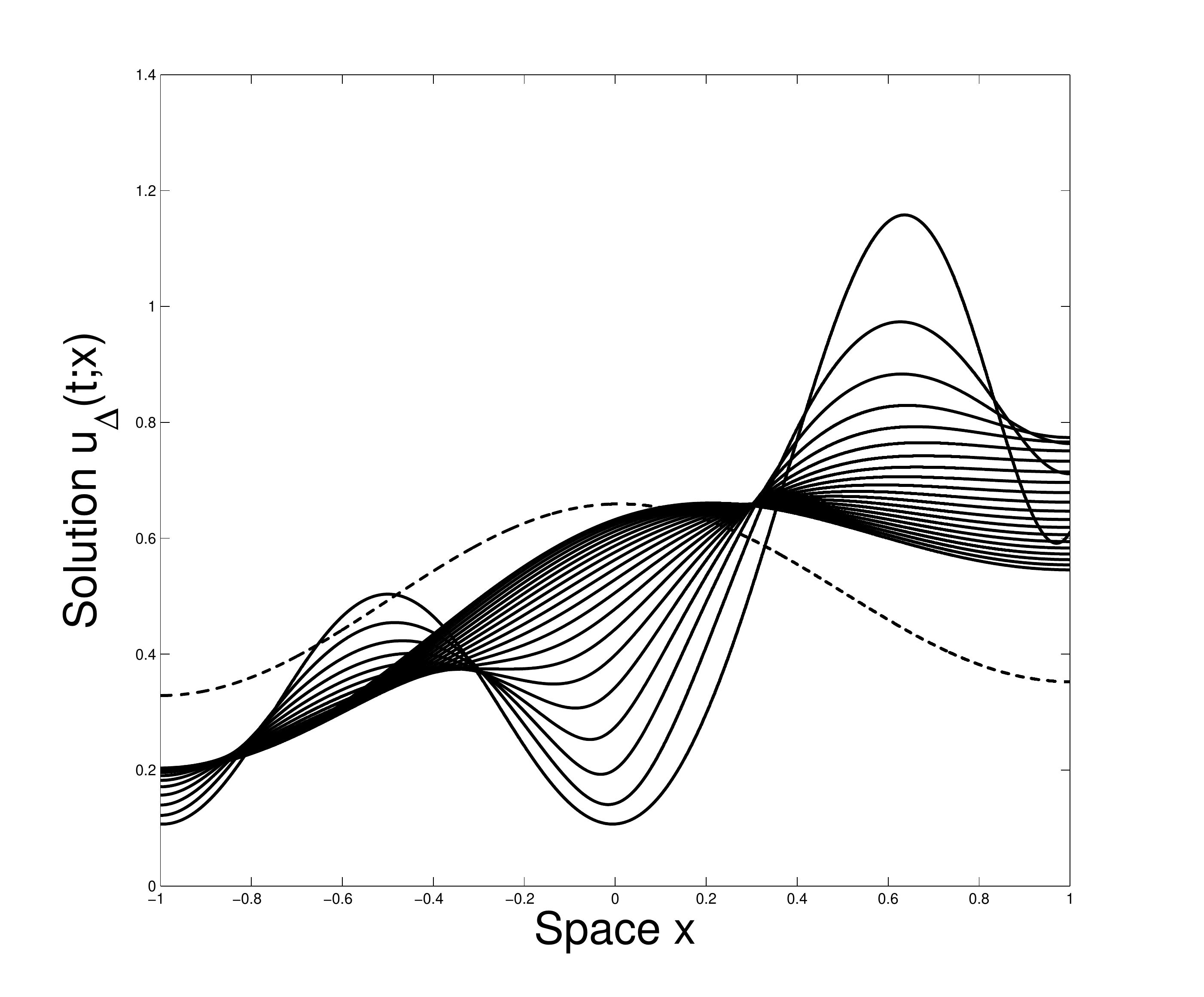}}
 \subfigure{\includegraphics[width=0.42\textwidth]{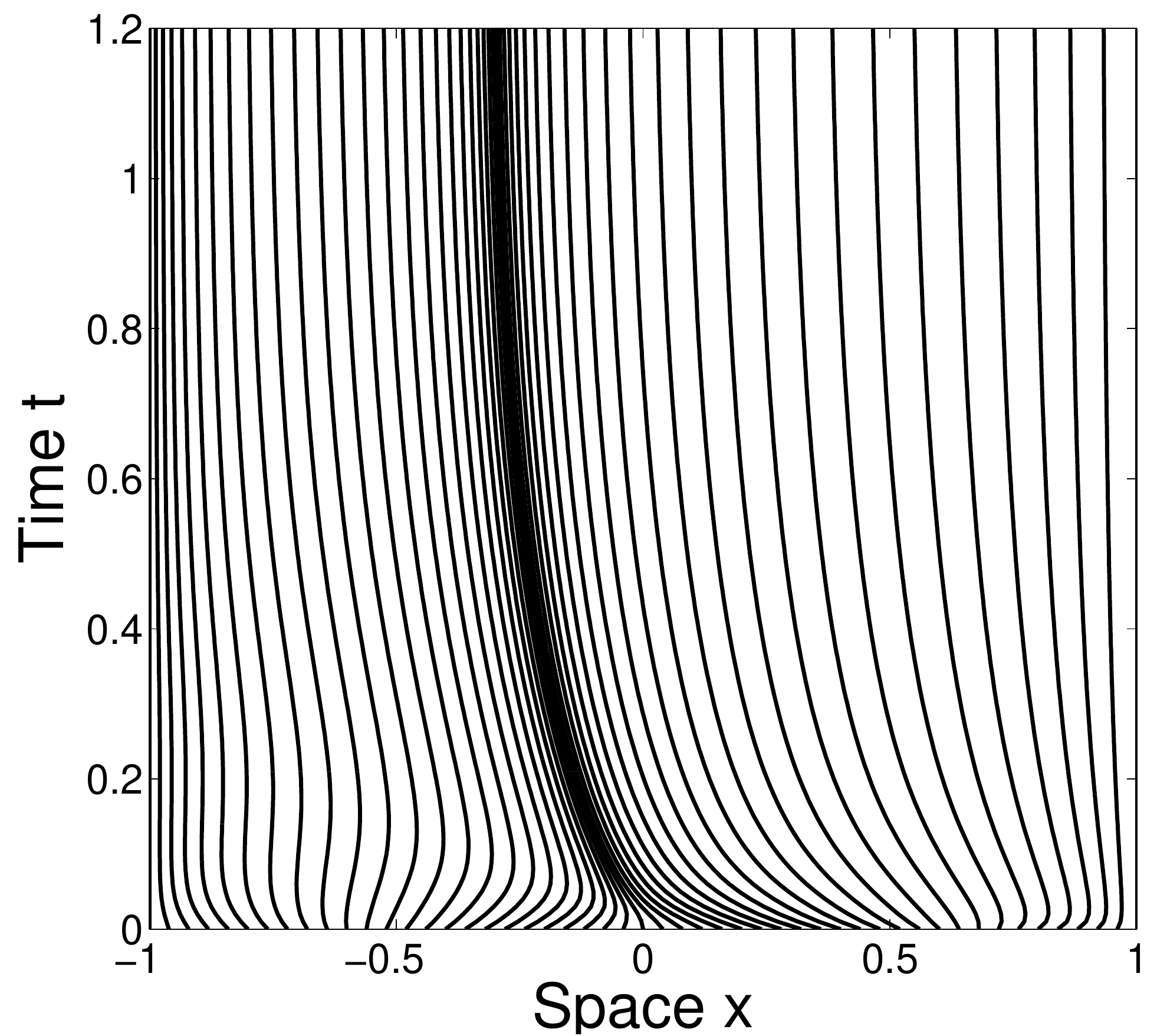}}
 \caption{Left: evolution of the (reference) solution $u_\Delta$ with initial condition \eqref{eq:id} at times $t=0,\tau,\ldots,20\tau=0.2$,
   with time step $\tau=10^{-2}$ and $K=5000$ grid points.
   The dotted line shows the stationary solution.
   Right: associated particle trajectories.}
 \label{fig:fig1}
\end{figure}

\subsubsection{Fixed $\tau$}
\label{exp1}
In a first series of experiments, we fix the time step $\tau=10^{-2}$ and vary the number of spatial grid points $K$.
In Figure \ref{fig:fig2}/left, the corresponding $L^1$-distances to the reference solution $u_{\text{ref}}$ obtained in \ref{exp0} above are shown as a function of time.
Note that the counter-intuitive dramatic decay of the error for small times is explained by the strong contractivity of the nonlinear diffusion in $L^1$
in a neighborhood of the initial condition.
Figure \ref{fig:fig2}/right shows the $L^1$-errors at $T=0.2$.
The observed convergence rate is of order $K^{-1}$.

\begin{figure}
 \centering
 \subfigure{\includegraphics[width=0.4\textwidth]{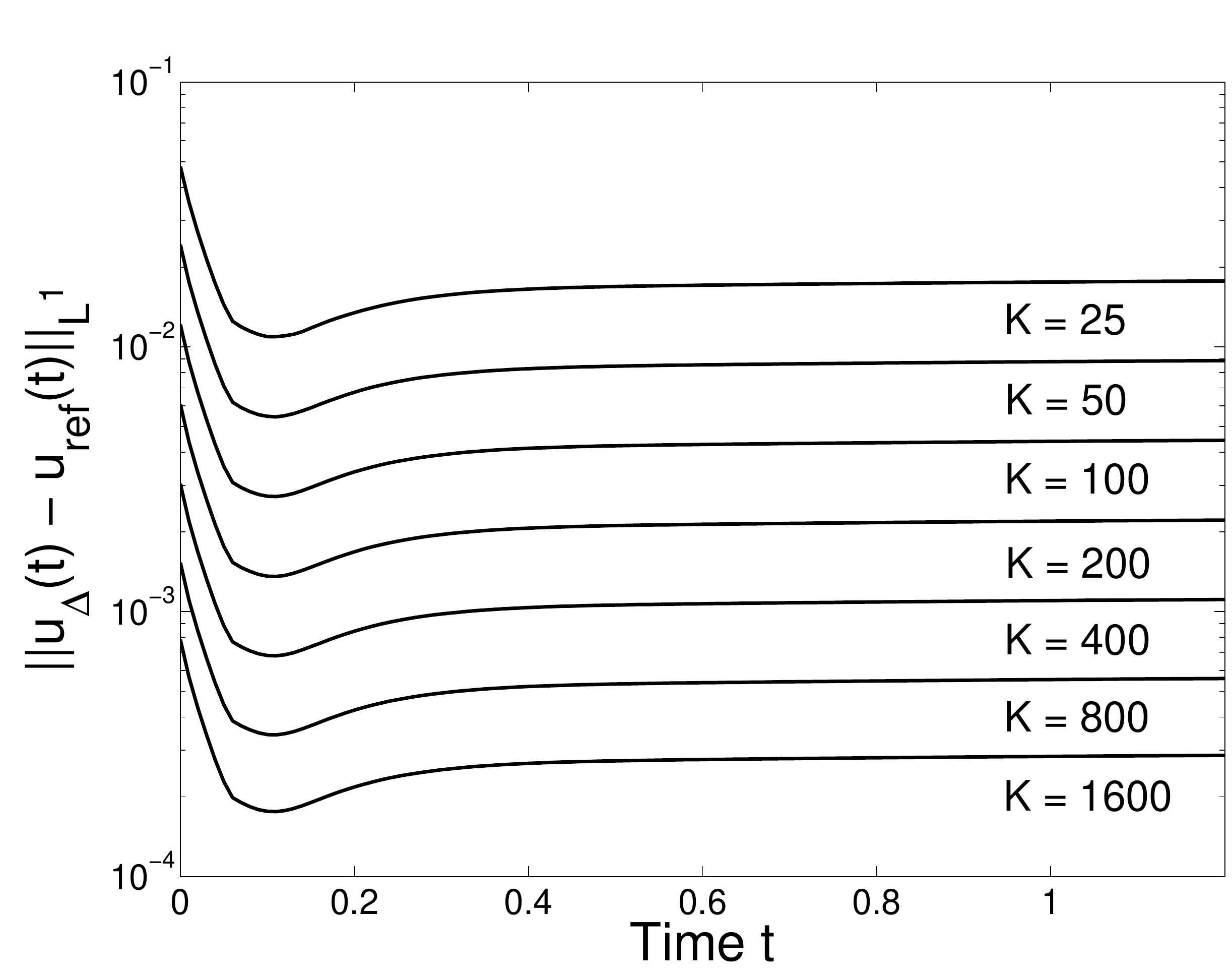}}
 \subfigure{\includegraphics[width=0.5\textwidth]{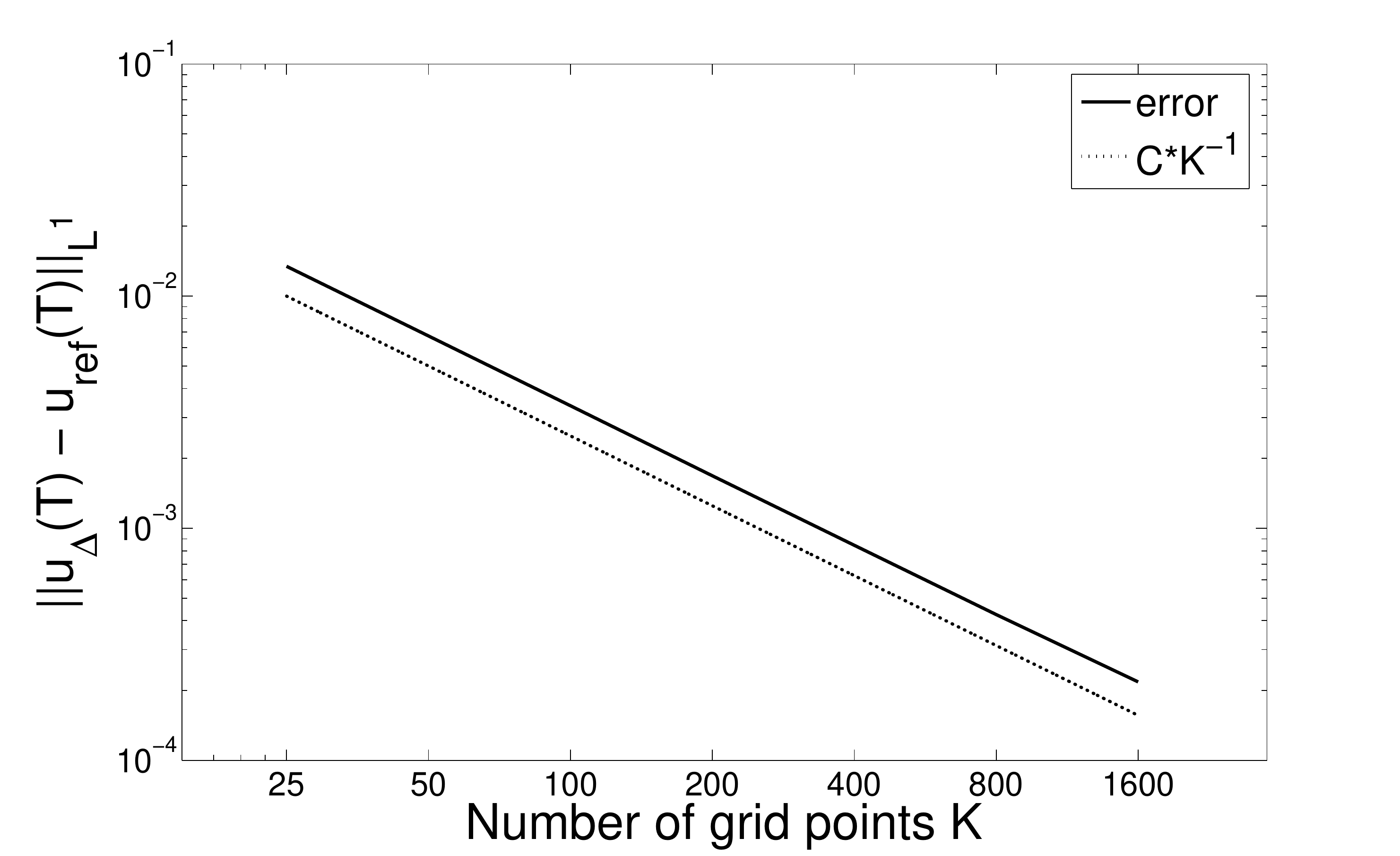}}
 \caption{Numerical error analysis with fixed time step $\tau=10^{-2}$, using $K=25,50,100,200,400,800,1600$ grid points. 
   Left: evolution of the $L^1$-error $\|\bar u_{\Delta}(t)-u_{\text{ref}}(t)\|_{L^1(I)}$. 
   Right: order of convergence at terminal time $T=0.2$.}
 \label{fig:fig2}
\end{figure}

\subsubsection{Fixed parabolic mesh ratio}
\label{exp3}
Next, we study the decay of the $L^1$-error under mutual refinement of space and time.
As it is standard in numerical experiments on parabolic equations, we fix the parabolic mesh ratio $K^2\tau$.
The value of this ratio is chosen such that the inverse CFL condition is satisfied in every experiment.
In Figure \ref{fig:fig4}, the error is plotted  --- similar as in the previous experiment ---
as a function of time (left) and at the fixed terminal time $T=0.2$ (right),
both for various choices of $\tau$.
The observed order of convergence is $\sqrt{\tau}$, which is in agreement with the result of experiment \ref{exp1}.
\begin{figure}
 \centering
 \subfigure{\includegraphics[width=0.45\textwidth]{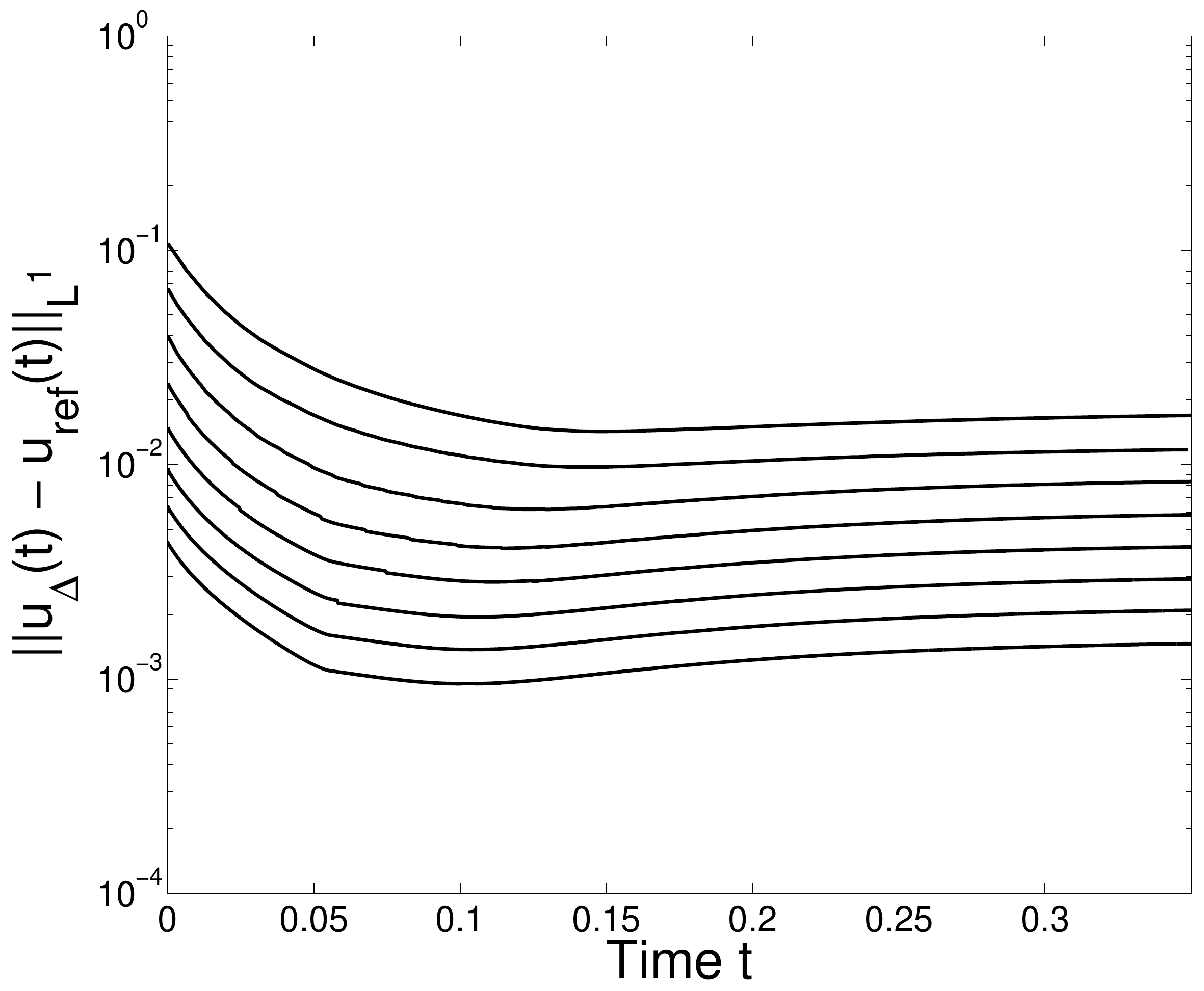}}
 \subfigure{\includegraphics[width=0.45\textwidth]{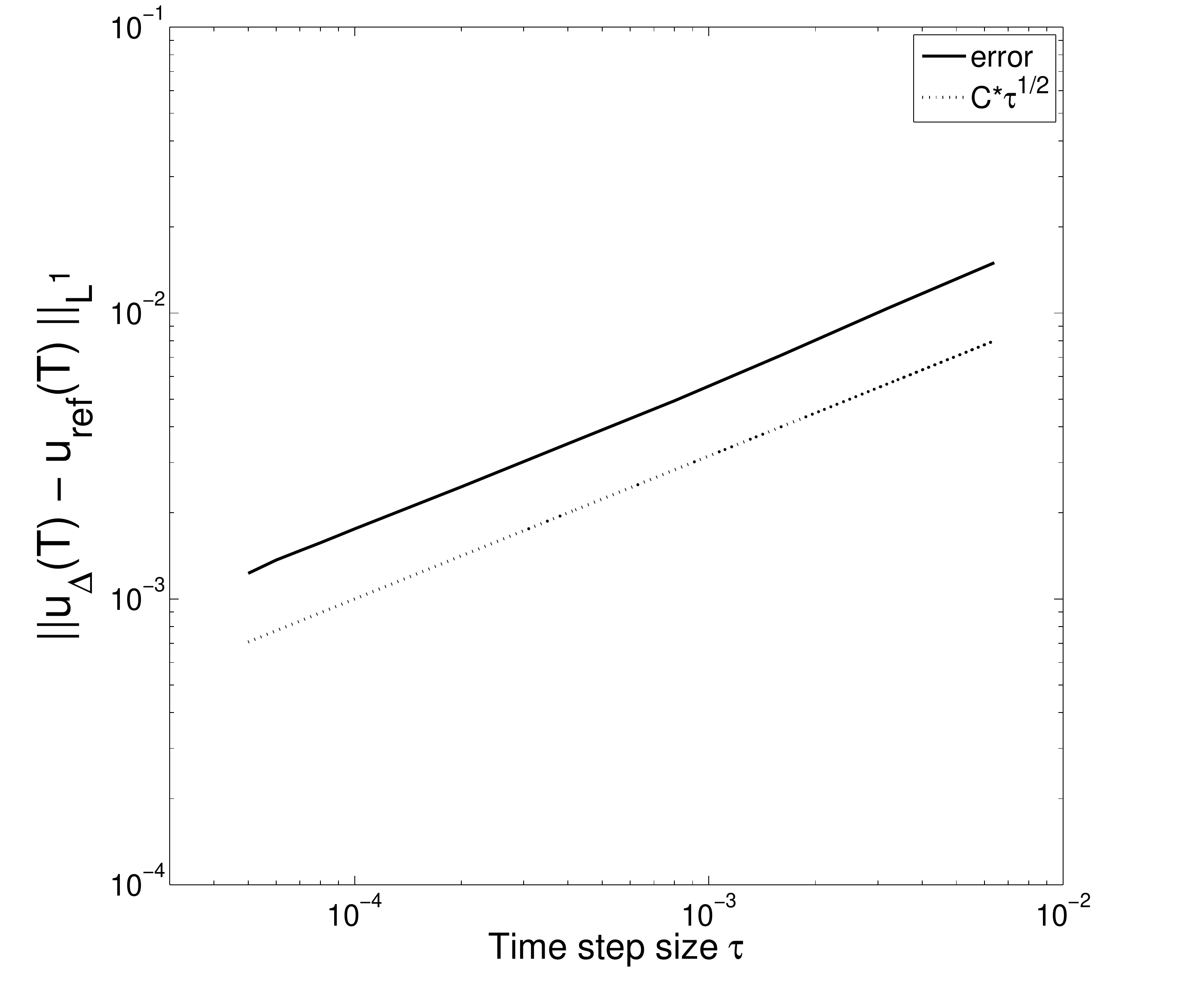}}
 \caption{Numerical error analysis with fixed parabolic mesh ratio $K^2\tau\approx 0.257$, 
   using $\tau=5\cdot10^{-5},10^{-4},5\cdot10^{-4},10^{-3},5\cdot10^{-3},10^{-2},5\cdot10^{-2},10^{-1}$. 
   Left: evolution of the $L^1$-error $\|\bar u_{\Delta}(t)-u_{\text{ref}}(t)\|_{L^1(I)}$. 
   Right: order of convergence at terminal time $T=0.2$.}
 \label{fig:fig4}
\end{figure}

\subsubsection{Weakly convergent initial datum}
\label{expnew}
In order to illustrate that it sufficies to approximate the original initial condition $u^0$ by its discretizations $u_\Delta^0$ just \emph{weakly} in $L^1(I)$,
we use perturbed discrete initial data $u_{\Delta,\eps}^0$ that are biased by high-frequency oscillations of fixed amplitude $0.1$, 
as indicated in Figure \ref{fig:fignew}/left.
As expected, the perturbation becomes almost invisible already after the first time step,
and the discrete solution $u_{\Delta,\eps}$ is indistinguishable from the one computed with unperturbed initial conditions $u_\Delta^0$.
\begin{figure}
 \centering
 \subfigure{\includegraphics[width=0.45\textwidth]{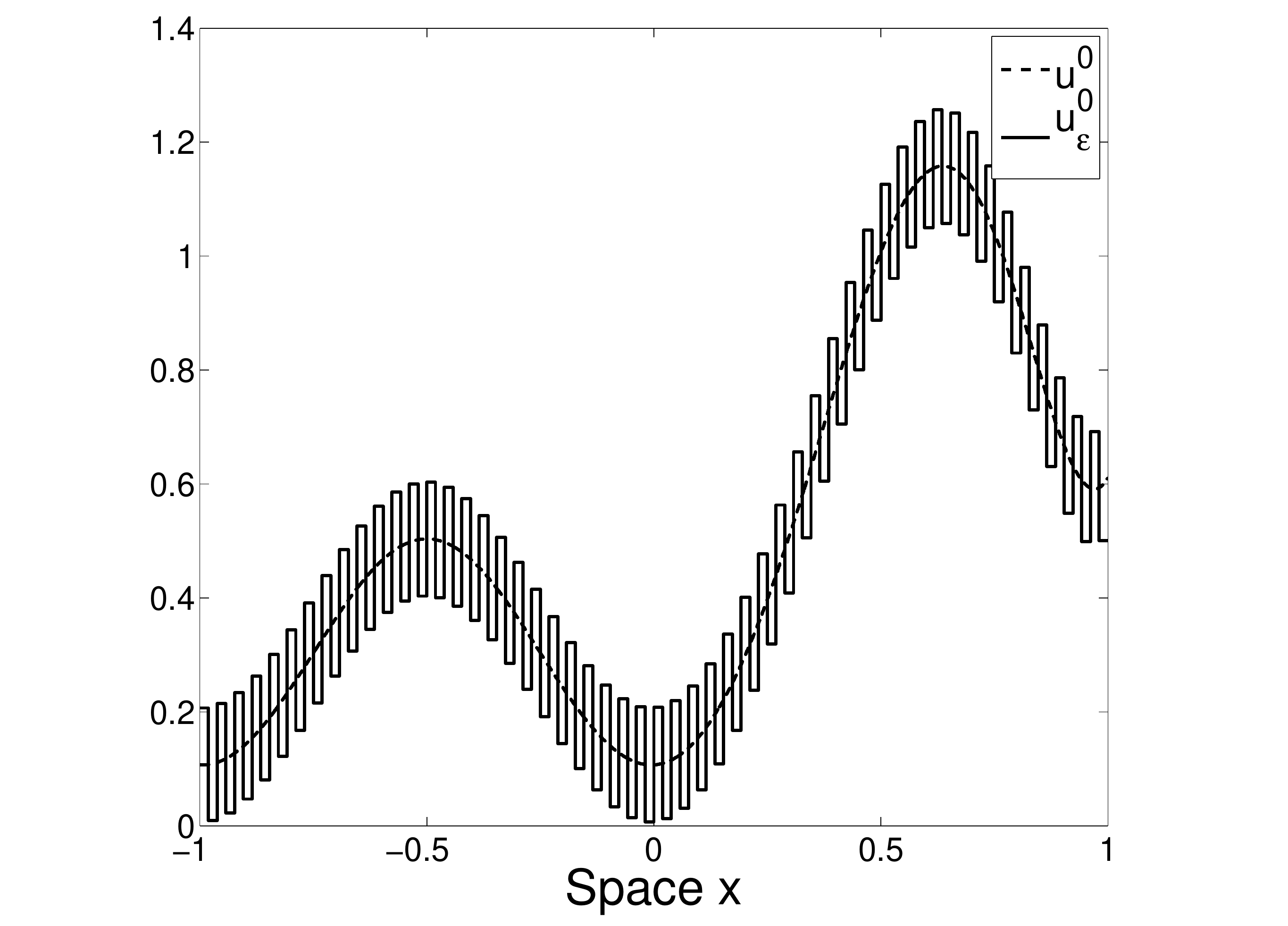}}
 \subfigure{\includegraphics[width=0.45\textwidth]{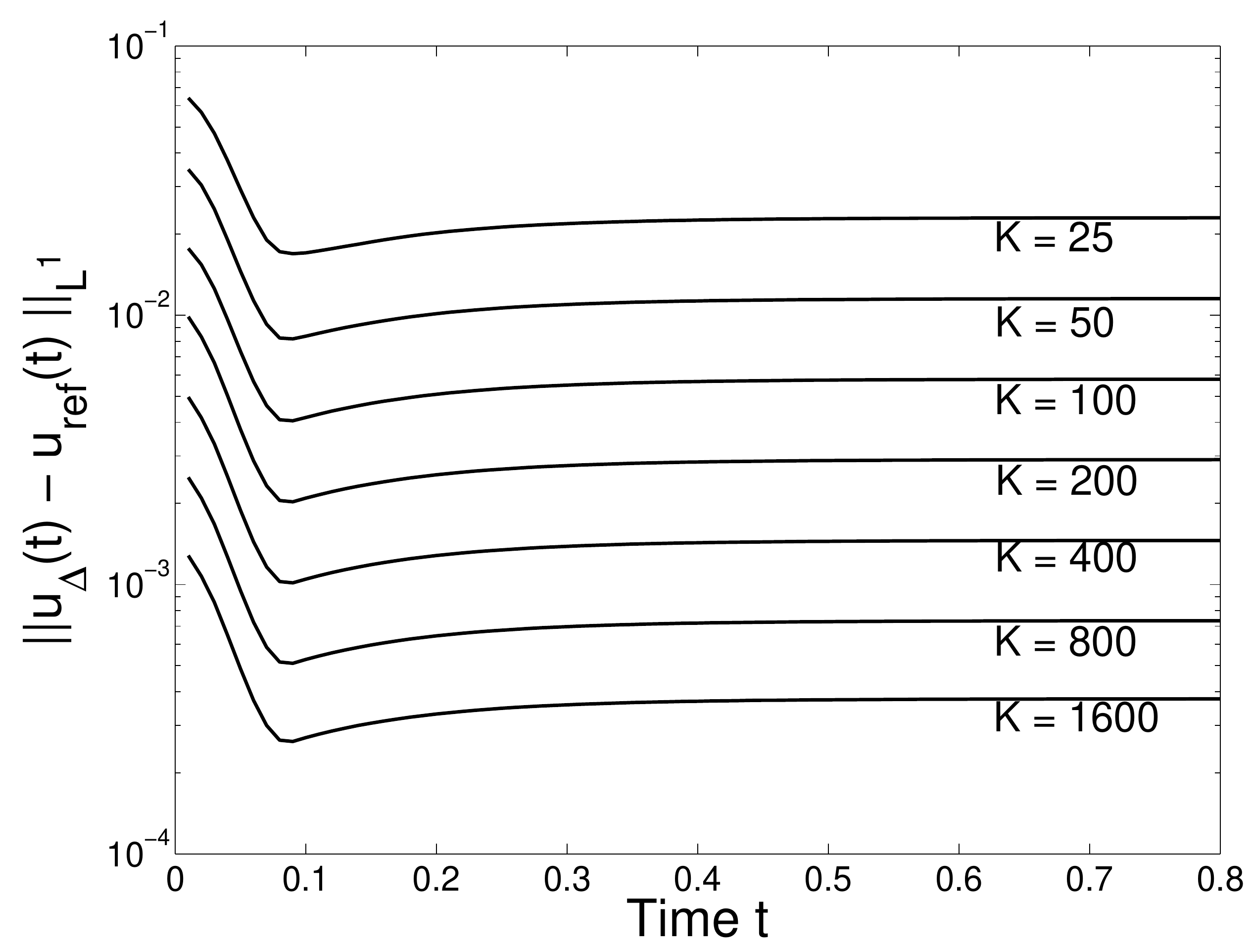}}
 \caption{Left: initial condition $u_{\Delta,\eps}^0$ with high frequency perturbation. 
   Right: numerical error analysis for discrete solutions with the discontinuous initial datum from \eqref{eq:discic},
   using a fixed time step $\tau=10^{-2}$ and varying $K=25,50,100,200,400,800,1600$.}
 \label{fig:fignew}
\end{figure}

\subsubsection{A discontinuous initial datum}
\label{exp4}
For the last two series of experiments, we change the initial condition $u^0$.
This first series is carried out with the discontinuous inital datum
\begin{align}
 \label{eq:discic}
 u^0(x) = 
 \begin{cases} 
   0.1, &\text{if $|x|>0.75$ or $|x|<0.25$}, \\
   0.9, &\text{otherwise}.
 \end{cases}
\end{align}
Similar to experiment \ref{exp1}, we fix $\tau=10^{-2}$ and vary the number of grid points $K$.
Figure \ref{fig:fignew}/right displays the corresponding $L^1$-error over the time interval $t\in[0,0.8]$.
In contrast to experiment \ref{exp1}, the approximation error is zero initially,
since the step function $u^0$ can be discretized exactly.
However, the error jumps to a positive value (that is of the same order as the initial error in experiment \ref{exp1}) in the first time step.
Afterwards, the qualitative behaviour is very similar to that in experiment \ref{exp1}.
The observed order of convergence (at $T=0.2$) is again $K^{-1}$.

\subsubsection{A non-positive initial datum}
\label{exp5}
For this last series of experiment, we consider the initial condition
\begin{align}
 \label{eq:compic}
 u^0(x) = \big(-\cos(2\pi x)+1.5\big)\big((x+0.5)^4+1\big)\times
 \begin{cases}-(x-0.5)(x+0.5) & |x|\leq 0.5\\0 & |x|> 0.5\end{cases},
\end{align}
which vanishes outside of the subinterval $[-0.5,0.5]\subset I$.
The numerical scheme is not directly applicable to $u^0$, 
but to any of its strictly positive approximations $u^0+\varepsilon$, see Figure \ref{fig:fig5}/left.
The qualitative numerical results at $T=0.6$ for various choices of $\varepsilon>0$ are given in Figure \ref{fig:fig5}/right.
\begin{figure}
 \centering
 \subfigure{\includegraphics[width=0.47\textwidth]{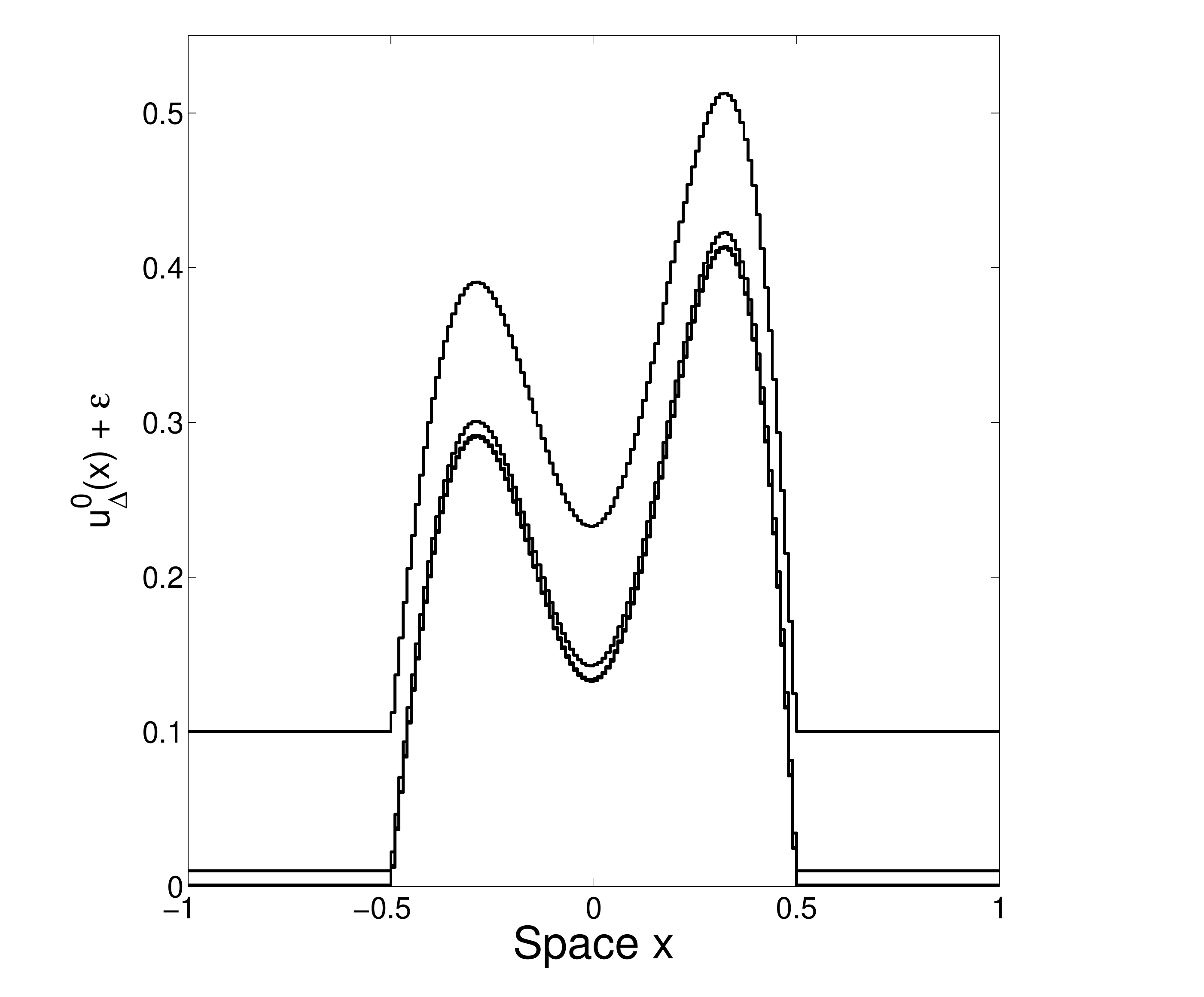}}
 \hspace{-0.05\textwidth}
 \subfigure{\includegraphics[width=0.55\textwidth]{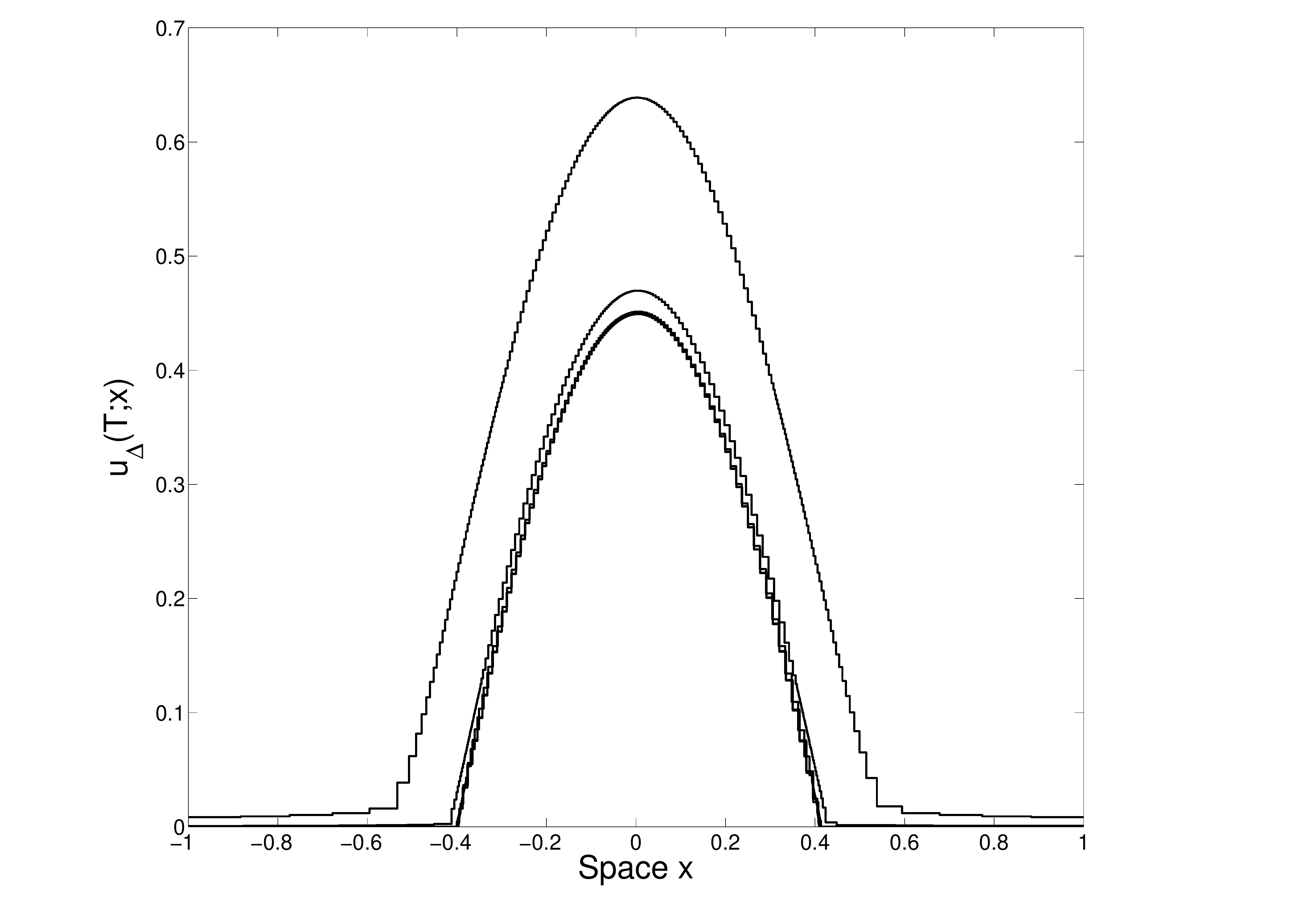}}
 \caption{The merely non-negative initial condition $u^0$ from \eqref{eq:compic} is approximated by strictly positive data $u^0+\eps$.
   Left: discrete initial profiles for $\eps=10^{-1},10^{-2},10^{-3},10^{-4},10^{-5}$.
   Right: qualitative behaviour of corresponding discrete solutions at $T=0.6$,
   using $\tau=10^{-3}$, $K=200$.}
 \label{fig:fig5}
\end{figure}

\subsection{Order of consistency}
The experimental observations that the discrete solutions seem to approximate the reference solution
with an error of order $K\tau$ can be supported theoretically by the following consistency consideration.

Assume that $X:[0,T]\times[0,M]\to I$ is a smooth solution of \eqref{eq:dde}.
Consider a discretization $\Delta=(\tau;\theh)$ that is equidistant w.r.t.\ $\xi$, 
i.e., $\xi_k=Mk/K$ with some $K\in\setN$ for all $k=0,1,2,\ldots,K$,
and consequently $\delta:=\delta_1=\cdots=\delta_K$.

From $\theX$, we define a discrete ``pseudo-solution'' $\xvec_\Delta$ by restriction, i.e., $x^n_k:=\theX(n\tau,k\delta)$.
We are going to show that $\xvec_\Delta$ satisfies the discrete evolution equation \eqref{eq:el} up to an error of order $\delta(\bigo(\tau)+\bigo(\delta^2))$.
To this end, we perform a Taylor expansion of $\theX$ around a fixed point $\apt:=(n\tau,k\delta)$ w.r.t.\ $\xi$:
\begin{align*}
 x^n_{k\pm1} &= \theX(\apt) \pm\delta \theX_\xi(\apt) + \frac{\delta^2}2\theX_{\xi\xi}(\apt) \pm \frac{\delta^3}6\theX_{\xi\xi\xi}(\apt) + \bigo(\delta^4).
\end{align*}
For the diffusion term, we find
\begin{align*}
 \psi'\Big(\frac{x^n_{k\pm1}-x^n_k}\delta\Big)
 &= \psi'(\theX_\xi(\apt)) \pm \frac\delta2\psi''(\theX_\xi(\apt))\theX_{\xi\xi}(\apt) \\
 & \qquad + \frac{\delta^2}2\Big(\frac13\psi''(\theX(\apt))\theX_{\xi\xi\xi}(\apt)+\frac14\psi'''(\theX(\apt))\theX_{\xi\xi}(\apt)^2\Big) + \bigo(\delta^3),
\end{align*}
so that
\begin{align*}
 \psi'\Big(\frac{x^n_{k+1}-x^n_k}\delta\Big) -\psi'\Big(\frac{x^n_k-x^n_{k-1}}\delta\Big)
 &= \delta\psi''(\theX_\xi(\apt))\theX_{\xi\xi}(\apt) + \bigo(\delta^3).
\end{align*}
For the drift term, we obtain
\begin{align*}
 \int_{k\delta}^{(k\pm1)\delta}V_x\big(\convX_\theh[\xvec_\Delta](\xi)\big)\hatf_k(\xi)\dd\xi
 & = \pm\delta \int_0^1 V_x\big((1-s)x^n_k + sx^n_{k\pm1}\big)(1-s)\dd s \\
 & = \pm\delta \int_0^1 \big[V_x(\theX(\apt)) \pm s\delta V_{xx}(\theX(\apt))\theX_\xi(\apt) + \bigo(\delta^2)\big](1-s)\dd s \\
 & = \pm\frac\delta2 V_x(\theX(\apt)) + \frac{\delta^2}6V_{xx}(\theX(\apt))\theX_\xi(\apt) + \bigo(\delta^3),
\end{align*}
so that
\begin{align*}
 \int_{(k-1)\delta}^{(k+1)\delta} V_x\big(\convX_\theh[\xvec_\Delta](\xi)\big)\hatf_k(\xi)\dd\xi
 &= \delta V_x(\theX(\apt)) + \bigo(\delta^3).
\end{align*}
In combination,
\begin{align}
 \label{eq:rightside}
 \big[\grd\nrj_\theh(\xvec)\big]_k = \delta\big(\psi'(\theX_\xi(\apt))_\xi + V_x(\theX(\apt)) + \bigo(\delta^2)\big).
\end{align}
Moreover, we have
\begin{align*}
 [\Wmat\xvec_\Delta^n]_k = \frac\delta6 x^n_{k-1} + \frac{2\delta}3x^n_k + \frac\delta6x^n_{k+1} = \delta \theX(\apt) + \bigo(\delta^3),
\end{align*}
and likewise
\begin{align*}
 [\Wmat\xvec_\Delta^{n-1}]_k = \delta \theX(\apt') + \bigo(\delta^3), 
\end{align*}
where $\apt'=((n-1)\tau,k\delta)$.
Finally, using
\begin{align*}
 \theX(\apt)-\theX(\apt') = \tau \theX_t(\apt) + \bigo(\tau^2),
\end{align*}
we obtain the relation
\begin{align}
 \label{eq:leftside}
 \frac1\tau\big[\Wmat(\xvec_\Delta^n-\xvec_\Delta^{n-1})\big]_k = \delta \big( \theX_t(\apt) + \bigo(\delta^2) + \bigo(\tau)\big).
\end{align}
Using the continuous evolution equation \eqref{eq:dde} in \eqref{eq:leftside} and \eqref{eq:rightside} leads to
\begin{align*}
 \frac1\tau\big[\Wmat(\xvec_\Delta^n-\xvec_\Delta^{n-1})\big]_k
 = - \big[\grd\nrj_\theh(\xvec)\big]_k + \delta\big(\bigo(\tau) + \bigo(\delta^2)\big).
\end{align*}
for all admissible $k$ and $n$.

\bibliographystyle{plain}
\bibliography{Horst}

\end{document}